\newcommand{\R}{\mathbb{R}} 
\newcommand{\N}{\mathbb{N}}
\newcommand{\F}{\mathcal{F}} 
\newcommand{\FI}{\mathcal{F}^{-1}}
\renewcommand{\div}{\mathrm{div}} 
\newcommand{\supp}{\mathrm{supp}} 
\newcommand{\diam}{\mathrm{diam}} 
\newcommand{\meas}{\mathrm{meas}}
\newtheorem{theorem}{Theorem}
\newtheorem{proposition}{Proposition}
\newtheorem{corollary}{Corollary}
\newtheorem{lemma}{Lemma}
\newtheorem{definition}{Definition}
\newtheorem{remark}{Remark}
\newcommand{\no}{\text{\rm RTV}}
\newcommand{\BV}{\text{\rm TV}}
\newcommand{\TV}{\text{\rm TV}}
\newcommand{\argmin}{\mathrm{argmin}}
\newcommand{\Var}{\mathrm{Var}} 
\newcommand{\floor}[1]{\lfloor #1 \rfloor}
\newcommand{\norm}[1]{\left\|#1\right\|}
\newcommand{\abs}[1]{\left\vert#1\right\vert}
\newcommand{\kl}[1]{\left(#1\right)}
\newcommand{\Kl}[1]{\left\{#1\right\}}
\newcommand{\LpRn}{{L_p(\R^n)}}
\newcommand{\HspRn}{{H^{s,p}(\R^n)}}
\newcommand{\HspdRn}{{H^{s,p'}(\R^n)}}
\newcommand{\SsptRn}{{S^{s,p}_t(\R^n)}}
\newcommand{\SspdtRn}{{S^{s,p'}_t(\R^n)}}
\newcommand{\Rspt}{R^{s,p}_t}
\newcommand{\RsptSnR}{{R^{s,p}_t(\R\times S^{n-1})}}
\newcommand{\Rn}{{\R^n}}
\newcommand{\Snm}{{S^{n-1}}}
\newcommand{\Fs}{\mathcal{F}_1} 
\newcommand{\Dst}{\Psi_{s,t}}
\newcommand{\SsptRx}[1]{{S^{s,p}_t(\R^{#1})}}
\newcommand{\SsxtRn}[1]{{S^{s,#1}_t(\R^n)}}
\newcommand{\SsxyRn}[2]{{S^{s,#1}_{#2}(\R^n)}}
\newcommand{\ind}[1]{\chi_{#1}}
\newcommand{\prox}{{\rm prox}}
\newcommand{\scal}{c_{\operatorname{sc}}}
\newcommand{\obsolete}[1]{}
\newcommand{\ol}[1]{\overline{#1}}
\newtheorem{example}{Example}
\newcommand{\remove}[1]{}
\title{Norms in sinogram space 
and stability estimates for the Radon transform}
\author{Stefan Kindermann\footnote{%
Industrial Mathematics Institute, Johannes Kepler University Linz, 4040 Linz, Austria, 
({\tt kindermann@indmath.uni-linz.ac.at}).}
\and
Simon Hubmer\footnote{%
Industrial Mathematics Institute, Johannes Kepler University Linz, 4040 Linz, Austria,
({\tt simon.hubmer@jku.at}). Corresponding author.}
}
\begin{document}

\maketitle

\begin{abstract}
We consider different norms for the Radon transform $Rf$ of a function~$f$ and investigate under which conditions they can be estimated from above or below by some standard norms for~$f$. We define Fourier-based norms for $Rf$ which can be related to Bessel-potential space norms for~$f$. Furthermore, we define a variant of a total-variation norm for $Rf$ and provide conditions under which it is equivalent to the total-variation norm of~$f$. To illustrate potential applications of these results, we propose a novel nonlinear backprojection method for inverting the Radon transform and present numerical results on simulated and experimental data.

\smallskip
\noindent \textbf{Keywords.} Inverse and Ill-Posed Problems, Radon Transform, Stability Estimates, Total Variation Regularization

\end{abstract}

\section{Introduction}

In this article, we investigate norms for the Radon transform $Rf$ of a function $f$, in the following referred to as \emph{sinogram norms}, with the aim to provide lower and upper bounds for them with respect to some standard norms for~$f$. This is motivated by the objective of,  on the one hand, generalizing  the well-known Sobolev-space estimates and, on the other hand, constructing filter operators based on these sinogram norms which can be used in corresponding approximate inversion methods for the Radon transform. Since we aim to go beyond the Hilbert-space case, the obtained filters are in general nonlinear operators. 

First of all, recall that the $n$-dimensional Radon transform of a function $f: \R^n \to \R$ is defined by 
    \begin{equation*}
        Rf(\sigma,\theta) := \int_{x\cdot\theta = \sigma} 
        f(x) \, dx, \qquad (\sigma,\theta) \in Z:= \R \times S^{n-1} \,, 
    \end{equation*}
where the integration is over an $(n-1)$-dimensional hyperplane with offset $\sigma$ and normal vector $\theta \in S^{n-1}$. For background and further details, see, e.g.,~\cite{Natterer}.

Since inverting the Radon transform is well known to be an ill-posed problem, it requires regularization/filtering for its solution \cite{Louis_1989,Natterer}. Consider, for instance, Tikhonov regularization, where a least-squares functional is augmented with a penalty functional, which often takes the form of a norm $\|f\|$, i.e.,
    \begin{equation}\label{Radon_Tikhonov}
        \|Rf - y\|^2 + \alpha \|f\| \,.     
    \end{equation}
If, in this situation, we can find a norm $\|Rf\|_{S}$ which is equivalent to $\|f\|$, then we can instead consider the equivalent Tikhonov functional
    \begin{equation*}
        \|Rf - y\|^2 + \alpha \|Rf\|_{S} \,,   
    \end{equation*}
the minimizer of which is also expected to yield a reasonable solution. As we see in Section~\ref{sec:4}, such an approach allows for more flexible regularization/filtering approaches, since data noise removal and actual inversion can be separated. In the linear case, this has, for example, been used in the two-step regularization~\cite{Klann,Klann2} or the $Y$-scale regularization~\cite{Egger}.

Even though achieving norm equivalence of $\|f\|$ and $\|R f\|_S$ is perhaps a too ambitious goal, the idea can still be used in numerical computations as long as the sinogram norms have  reasonable filter properties. In any case, investigating the relations between norms in image space and sinogram space provides some new stability estimates for the Radon transform which might be of general use. 

In the Hilbert space case, equivalent norms have been found in the form of classical Sobolev-space estimates for the Radon transform, see, e.g.,~\cite{Natterer1980,Natterer}:
    \begin{equation}\label{sob}
        C_1 \|f\|_{H^s_0(\Omega^n) }\leq \|R f\|_{H^{s +\frac{n-1}{2}}(Z)}   \leq C_2 \|f\|_{H^s_0(\Omega^n)} \,, 
    \end{equation}
given that $f$ has compact support in the unit ball $\Omega^n$. Here, $H^s_0(\Omega^n)$ and $H^{s +\frac{n-1}{2}}(Z)$ are classical Sobolev spaces, whose norms can be defined via Fourier transforms as in Section~\ref{sec:2}. Note that the derivatives in $H^{s +\frac{n-1}{2}}(Z)$ are only taken with respect to the offset variable $\sigma$. Similar results for the $L^2$-case can, e.g., be found in \cite{Hertle,HoMi,RuQui} and the references in the next sections. Estimates in spaces of smooth functions with low decay can be found in \cite{kat}.

The objective of this article is to find useful norms for $Rf$ and certain conditions on $f$ such that \eqref{sob} (or parts of it) are satisfied when the Sobolev norms are replaced by other norms, such as the $W^{k,p}$-norms or total variation. An instance of a sinogram-space norm idea is to use a Besov-norm in Radon space, i.e., $\|Rf\|_{R_{p,q}^s}$, which leads to the Ridge space of Cand\`{e}s \cite{CandesPhD,CaDo}. However, relating this norm to function space norms of $f$ or considering them from the aspect of  variational regularization, as is our intent here, has not yet been attempted. Related, but different to our study, is the analysis of the mapping properties of the operator $R^*R$, which corresponds to the Riesz potential operator. Upper bounds in $L^p$ spaces such as \cite[Theorem~1]{Stein70} are based on Hardy-Littlewood-Sobolev estimates, while upper bounds in Besov spaces can be found, e.g., in \cite{Yang03}. A lower and upper bound for nonnegative functions in terms of maximal functions due to Muckenhoupt and Wheeden can be found in \cite[Theorem~3.6.1]{AdHe}.

This article is organized as follows: In Section~\ref{sec:2}, we extend \eqref{sob}, where the Sobolev norms are replaced by Bessel-potential space norms. Estimates which generalize \eqref{sob} are obtained using Sharafutdinov-type norms. This section is based on Fourier-space estimates. In Section~\ref{sec:3}, we investigate the use of the well-known total-variation norm for $f$ and a novel total-variation-like norm for~$Rf$. We obtain norm equivalence for certain simple images $f$, and the methods used there rely on results from geometric measure theory. In Section~\ref{sec:4}, we illustrate the use of these norms for deriving nonlinear filtered backprojection algorithms, and present a number of numerical results on simulated and experimental data.

\section{Bessel-potential space estimates}\label{sec:2}

We start by discussing generalizations of \eqref{sob} based on norms in Fourier space. Here, the aim is to replace the usual Sobolev spaces for $f$ by Bessel-potential spaces based on $L^p$, $p\not = 2$, and use appropriate Fourier-based norms for $Rf$. In the following, we denote by $p'$ the conjugate index of $p$, i.e., $\frac{1}{p} + \frac{1}{p'} = 1$.

\subsection{Differential dimension}

The classic stability estimate \eqref{sob} implies that in the standard Sobolev space setting, the Radon transform induces a smoothing in the offset variable of the order $\frac{n-1}{2}$. Consequently, inverting the Radon transform can be considered as being as ill-posed as differentiation of order $\frac{n-1}{2}$. This is in fact reflected in the classic back projection formula \cite{Natterer}, where just such a differentiation is performed. Hence, it is commonly thought that in order to obtain stability estimates for the Radon transform, the norm in sinogram space requires an $\frac{n-1}{2}$-times higher order of differentiation in the offset variable than that of the norm in $f$. While this paradigm is correct for $L^2$-based spaces, it is not necessarily true when generalizing \eqref{sob} to $L^p$-based spaces, which we indicate below.

Instead of the order of differentiation, the more appropriate concept in this case is that of the differential dimension (see, e.g., \cite{RuSi}). For a function $f$ defined in $\R^n$, let \mbox{$f_\lambda:= f(\lambda x)$}, for some $\lambda >0$. For a given norm, we define the differential dimension as the exponent $a \in \R$ such that $\|f_\lambda\| = \lambda^a \|f\|$, plus possibly lower-order terms in $\lambda$ (whenever such an exponent exists). It is well-known that the usual Besov/Sobolev spaces that use the $L^p$-norm of a highest derivative of order $s$ have a differential dimension (i.e., scaling exponent) given by 
    \begin{equation*}
        a= s -\frac{n}{p} \,.        
    \end{equation*} 
The Radon transform of $f_\lambda = f_\lambda(\sigma,\theta) := f(\lambda \sigma, \theta)$ satisfies 
    \begin{equation*}
        R f_\lambda(\sigma,\theta) = \frac{1}{\lambda^{n-1}} 
        R f(\lambda \sigma,\theta) \,.
    \end{equation*}
Thus, taking for $f$ an $L^p$-norm with order-$s$ derivatives (e.g., $\|f\|_{W^{s,p}}$) and for $Rf$ an $L^{p_R}$-norm with $s_R$ derivatives for the (one-dimensional) offset variable $\sigma$, (e.g., $\|Rf(\cdot,\theta)\|_{W^{s_R,p_R}}$), we have the necessary condition for norm-equivalence 
    \begin{equation}\label{noi} 
        s_{R} - \frac{1}{p_R} -(n-1)= s -\frac{n}{p} \,. 
    \end{equation}
In particular, this relation holds for \eqref{sob}, where $s_R = s + \frac{n-1}{2}$ and $p_R = p = 2$. Thus, rather than saying that the Radon transform increases smoothness by $\frac{n-1}{2}$ it is more suitable to say that the Radon transform increases the differential order by $n-1$. From these considerations, it follows that when the Radon transform is considered between $L^p$-based spaces, is does not necessarily induce a smoothing in the offset variable of order $\frac{n-1}{2}$ as in the classic $L^2$-case.

\subsection{Fourier-based norms and estimates}

In this section, we define some appropriate norms for the Radon transform and state a number of known norm estimates, starting with the classic Oberlin-Stein estimates \cite{SteinOberlin}: For $Rf = :g = :g(\sigma,\theta)$ in Radon space, define the norms:
    \begin{equation*}
        \|g\|_{q,r}^q = \int_{S^{n-1}} \left(\int_{\R} |g(\sigma,\theta)|^r \, d\sigma \right)^\frac{q}{r} \, d \theta \,.  
    \end{equation*}
Then due to \cite[Theorem 1]{SteinOberlin}, for $n\geq 2$ there is a constant $C>0$ such that 
    \begin{equation}\label{eq:SO}
        \|R f \|_{q,r}  \leq C \|f\|_{L^p} \,,
    \end{equation}
if and only if
    \begin{equation*}
        1 \leq p < \frac{n}{n-1} \,,
        \qquad q \leq p' \,, \qquad \text{and} \qquad \frac{1}{r} = \frac{n}{p} - n+1 \,. 
    \end{equation*}
Note that the third of these conditions, which essentially restricts the parameter $r$, corresponds to the requirement of scale invariance discussed above.

Next, we discuss Reshetnyak-Sharafutdinov estimates. First, we need to introduce some norms based on the Fourier transform, which in the following is denoted by $\F$ and defined via $\F f(\xi) := (2\pi)^{-n/2} \int_{\R^n} e^{i x \cdot\xi} f(x) dx$. 

\begin{definition}
For $s,t \in \R$ and $\xi \in \R^n$, define the weights
    \begin{align}  
        w_s(\xi) := (1 + |\xi|^2)^\frac{s}{2} \,, \qquad \text{and} \qquad
        v_{s,t}(\xi) :=  (1 + |\xi|^2)^\frac{s}{2}
        \left(\frac{|\xi|}{(1 + |\xi|^2)^\frac{1}{2} } \right)^t \,.
        \label{vweight}
    \end{align}
Furthermore, define the standard Bessel-potential spaces $\HspRn$ via
    \begin{equation}\label{defbessel}
        \norm{f}_\HspRn :=  \| \F^{-1}\left(w_{s}\F(f) \right) \|_{L^p} \,.
    \end{equation}
For all $1\leq p < \infty$, $s\in \R$, and $t > -n/p$, the spaces $\SsptRn$ are defined as the completion of $\mathcal{S}(\Rn)$ under the norm
    \begin{equation} 
        \norm{f}_\SsptRn  := \| \F(f) v_{s,t} \|_{L^p} \,. 
    \end{equation}
Finally, for all $1\leq p < \infty$, $s\in \R$, and $t > -1/p$, define the sinogram space $\RsptSnR$ as the completion of $\mathcal{S}(\R \times S^{n-1})$ with respect to the norm
    \begin{equation*}
    \begin{split}
        \norm{g}_\RsptSnR &:= \left( \int_{S^{n-1}}  \| g(\cdot,\theta)\|_{\SsptRx{1}}^p \, d \theta \right)^{1/p}  \\
        &= \left(\int_{S^{n-1}} \int_{\R}
         |v_{s,t}(\sigma)|^p |\Fs g(\sigma,\theta)|^p d\sigma d\theta \right)^{1/p}\,.
     \end{split}
\end{equation*}
\end{definition}

In the above definition, $\| g(\cdot,\theta)\|_{\SsptRx{1}}$ denotes the norm with respect to the offset variable $\sigma$ and thus only involves the 1D Fourier transform $\Fs g$.

For an integer $s \in \N_0$ and $1 < p < \infty$, the Bessel potential norm $\HspRn$ is equivalent to the usual Sobolev norm $W^{s,p}$, and for noninteger $s$, the space ``almost'' coincides with $W^{s,p}$; see, e.g., \cite[p.~12]{Triebel} or \cite[Theorem~7.63]{Adams}. Furthermore, it is known that the space $H^{s,p}$ coincides with the Lizorkin-Triebel space $F_{p,2}^s$; see \cite[Theorem~4.2.2]{AdHe}, \cite[p.~29]{Triebel}. The norms in $\RsptSnR$ can be seen as having differentiation order $s$, where $t$ is a fine-tuning parameter. With respect to the differential dimension, note that $\F f_\lambda = \lambda^{-n} \F f(\cdot/\lambda)$, and thus $\Rspt$ has a differential dimension of $ s -\frac{n}{p'}$. 

For the $L^2$-case, Sharafutdinov in \cite[Theorem 2.1]{Shara} generalized a result which, according to \cite{Shara}, originates in unpublished work of Reshetnyak (cf.~also \cite{Gelfand} and the references in \cite{Shara}): For all $s \in \R$ and $t > -\frac{n}{2}$ there holds  
    \begin{equation}\label{sharaeq}
        \norm{f}_\SsxtRn{2} = 
        \norm{Rf}_{R^{s + (n-1)/2,2}_{t+(n-1)/2}(\R \times \Snm)}\,, 
    \end{equation}
and the Radon transform is a bijective isometry between these spaces. Moreover, Sharafutdinov showed that $\norm{f}_\SsxtRn{2}$ and the usual Sobolev norm $\|f\|_s = \norm{f}_{H^{s,2}(\Rn)}$ are equivalent for compactly supported functions; see \cite[Lemma~2.3]{Shara}:

\begin{theorem}[\mbox{\cite[Lemma~2.3]{Shara}}]\label{thm_SHA_1}
For any $s\in\R$ and for $t \in (-n/2,n/2)$, the norms $\norm{f}_{S^{s,2}_{t}(\Rn)}$ and $\norm{f}_{S^{s,2}_0(\Rn)} = \norm{f}_{H^{s,2}(\Rn)}$ are equivalent on the space of functions $f \in \mathcal{S}(\Rn)$ supported in a fixed ball $\Kl{x \in \Rn \, \vert \, \abs{x} \leq A}$.
\end{theorem}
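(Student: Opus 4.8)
The claim is norm equivalence between $\|f\|_{S^{s,2}_t}$ and $\|f\|_{H^{s,2}} = \|f\|_{S^{s,2}_0}$ for $t \in (-n/2, n/2)$, restricted to Schwartz functions supported in a fixed ball. Since both norms live on Fourier side as weighted $L^2$ norms with weights $w_s$ (for $H^{s,2}$) and $v_{s,t}$ (for $S^{s,2}_t$), the ratio of weights is $(|\xi|/(1+|\xi|^2)^{1/2})^t$. For $|\xi|$ bounded away from $0$ this ratio is bounded above and below (uniformly, since it tends to $1$ as $|\xi|\to\infty$), so the only issue is the behavior near $\xi = 0$: when $t>0$ the weight $v_{s,t}$ vanishes at the origin like $|\xi|^t$, so $S^{s,2}_t$ is a priori weaker, and we need compact support to recover the reverse bound; when $t<0$ the weight blows up like $|\xi|^{-|t|}$, so $S^{s,2}_t$ is a priori stronger, and compact support is needed for the forward bound.

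First I would reduce to the case $s=0$ by noting that $\F(f) v_{s,t} = \F(g) v_{0,t}$ where $g = \F^{-1}(w_s \F f)$, and that $g$ is again Schwartz; however $g$ need not be compactly supported, so this reduction is not completely clean and I'd instead keep $s$ general and just observe the weight $w_s$ is harmless. The real content is: for $f\in\mathcal S(\R^n)$ supported in $\{|x|\le A\}$, control $\int_{|\xi|\le 1} |\widehat f(\xi)|^2 |\xi|^{\pm|t|} \, d\xi$ by $\|f\|_{H^{s,2}}^2$. The key tool is that for compactly supported $f$, $\widehat f$ is real-analytic (entire, in fact), and — crucially — $|\widehat f(\xi)| \le \meas(\supp f)^{1/2}\|f\|_{L^2} \le C_A \|f\|_{H^{0,2}}$ by Cauchy–Schwarz, so $\widehat f$ is bounded near the origin. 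For the $t>0$ direction (need $\|f\|_{S^{s,2}_t}\gtrsim\|f\|_{H^{s,2}}$), one direction of the equivalence is trivial ($v_{s,t}\le w_s$ pointwise up to constants when... actually one must check signs), and for the nontrivial direction near the origin one uses $|\xi|^t \ge c$ is false — instead one must argue the low-frequency part of $\|f\|_{H^{s,2}}$ is itself controlled. Here the right statement is a Poincaré–type inequality: since $f$ is supported in a ball, $\int_{|\xi|\le 1}|\widehat f(\xi)|^2 d\xi \lesssim \int_{|\xi|\le 1}|\widehat f(\xi)|^2 |\xi|^t \,d\xi + (\text{high freq})$, which one proves by the boundedness and smoothness of $\widehat f$ on the ball together with the fact that $\widehat f(0)=\int f$ cannot be "too large" relative to higher frequencies when $\supp f$ is small — more precisely one writes $\widehat f(\xi) = \widehat f(\xi) - $ suitable averages and uses $|\nabla\widehat f| = |\widehat{xf}| \le C_A\|f\|_{L^2}$. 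The cleanest route: prove $\|\widehat f\|_{L^\infty(|\xi|\le 1)}^2 \lesssim \|f\|_{H^{s,2}}^2$ and $\|\widehat f\|_{L^\infty(|\xi|\le 1)}^2 \gtrsim \int_{|\xi|\le 1}|\widehat f|^2|\xi|^{-|t|}\,d\xi$ would give one side, but the matching lower bound $\|\widehat f\|_{L^\infty(|\xi|\le1)} \lesssim (\text{something computable from }f)$ is where compact support enters decisively.

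The honest core argument, which I'd present as a lemma, is: \emph{for $f\in\mathcal S(\R^n)$ with $\supp f \subset \{|x|\le A\}$ and any $\beta\in(-n,n)$,}
\begin{equation*}
    \int_{\R^n} |\widehat f(\xi)|^2\, v_{0,\beta}(\xi)^2\, d\xi \le C(A,\beta)\, \|f\|_{L^2}^2 + C\int_{|\xi|\ge 1}|\widehat f(\xi)|^2\,d\xi,
\end{equation*}
which for the troublesome regime reduces to bounding $\int_{|\xi|\le 1}|\widehat f(\xi)|^2 |\xi|^\beta\,d\xi$ for $\beta\in(-n,0)$ by $\|f\|_{L^2}^2$; this follows because $|\xi|^\beta \in L^1_{loc}(\R^n)$ exactly when $\beta > -n$, and $\widehat f$ is bounded on $|\xi|\le1$ with bound $\le \meas(B_A)^{1/2}\|f\|_{L^2}$. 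For $\beta \in (0,n)$ the integral $\int_{|\xi|\le1}|\widehat f|^2|\xi|^\beta d\xi \le \int_{|\xi|\le1}|\widehat f|^2 d\xi$ trivially, giving one inequality, and the reverse (controlling $\int_{|\xi|\le1}|\widehat f|^2 d\xi$ by the weighted integral, needed when $t>0$) is obtained by applying the $\beta<0$ case with exponent $-\beta$ but with $f$ replaced by... this is where one needs a genuine argument: write $\int_{|\xi|\le1}|\widehat f|^2 d\xi = \int_{|\xi|\le1}(|\widehat f|^2|\xi|^\beta)|\xi|^{-\beta}d\xi \le \|\widehat f\|_{L^\infty}^\beta \cdot(\ldots)$ — more simply, $\int_{|\xi|\le1}|\widehat f|^2 d\xi \le \|\widehat f\|_{L^\infty(|\xi|\le1)}^{2-2\gamma}\big(\int_{|\xi|\le1}|\widehat f|^2|\xi|^\beta d\xi\big)^{\gamma}\big(\int_{|\xi|\le1}|\xi|^{-\beta\gamma/(1-\gamma)}d\xi\big)^{1-\gamma}$ by Hölder with a suitable $\gamma\in(0,1)$, and the last integral is finite provided $\beta\gamma/(1-\gamma)<n$, which is arrangeable for $\beta<n$; combined with $\|\widehat f\|_{L^\infty}\lesssim_A\|f\|_{L^2}$ and Young's inequality to absorb, this closes the estimate.

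\textbf{Main obstacle.} The genuinely delicate point is the symmetric role of the constraint $|t|<n/2$: the condition appears because $v_{0,t}^2 = (|\xi|/(1+|\xi|^2)^{1/2})^{2t}$ behaves like $|\xi|^{2t}$ near $0$, and local integrability of $|\xi|^{2t}$ on $\R^n$ requires $2t > -n$, i.e. $t > -n/2$; the upper constraint $t<n/2$ comes in symmetrically when one must run the Hölder/absorption argument in the other direction (controlling the unweighted low-frequency mass by the weighted one), forcing the conjugate exponent to stay below $n$. Getting the Hölder exponents and the absorption constant to line up so that exactly the open interval $(-n/2,n/2)$ works — and checking that the constant $C(A)$ genuinely depends only on $A$ (via $\meas(B_A)$ and derivative bounds $\|\widehat{x^\alpha f}\|_\infty \le C_A\|f\|_{L^2}$) and not on finer properties of $f$ — is the part requiring care; everything else (the high-frequency regime, the harmless weight $w_s$, the density/completion issues) is routine.
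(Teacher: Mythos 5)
The paper does not actually prove this statement: it is imported verbatim as \cite[Lemma~2.3]{Shara}, so there is no internal proof to compare yours against. Judged on its own, your self-contained argument is essentially viable and follows the natural route: after Plancherel both norms are weighted $L^2$ norms of $\F f$, the weights are comparable on $\abs{\xi}\geq 1$, and the whole content is the low-frequency region, where compact support enters through $\sup_{\abs{\xi}\leq 1}\abs{\F f(\xi)}\leq C_A\norm{f}$; for $t<0$ one then only needs local integrability of $\abs{\xi}^{2t}$ (forcing $t>-n/2$), and for $t>0$ your H\"older-plus-absorption step correctly recovers the unweighted low-frequency mass.

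Two points deserve attention. First, the one genuine gap: you justify $\norm{\F f}_{L^\infty(\abs{\xi}\leq 1)}\lesssim\norm{f}_{H^{s,2}}$ via $\abs{\F f(\xi)}\leq \meas(B_A)^{1/2}\norm{f}_{L^2}$, which only settles $s\geq 0$. For $s<0$ the inequality $\norm{f}_{L^2}\lesssim\norm{f}_{H^{s,2}}$ is false even for functions supported in a fixed ball (take modulated bumps $e^{ik\cdot x}\chi(x)$, whose $L^2$ norm is constant while the $H^{s,2}$ norm tends to $0$). The fix is standard but must be said: write $\F f(\xi)=\langle f,\,e^{i\langle\cdot,\xi\rangle}\chi\rangle$ for a cutoff $\chi\in C_c^\infty$ equal to $1$ on $B_A$, and use the $H^{s}$--$H^{-s}$ duality together with the uniform bound on $\norm{e^{i\langle\cdot,\xi\rangle}\chi}_{H^{-s,2}}$ for $\abs{\xi}\leq 1$; this is precisely where the fixed support ball is used for negative $s$. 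Second, a remark rather than an error: your H\"older step requires $2t\gamma/(1-\gamma)<n$, which is arrangeable for \emph{every} $t>0$ by taking $\gamma$ small, so your own argument does not in fact use the upper constraint $t<n/2$ anywhere; only $t>-n/2$ is forced (by local integrability of $\abs{\xi}^{2t}$, and already by the definition of $S^{s,2}_t$). Your closing speculation that $t<n/2$ enters ``symmetrically'' in the absorption is therefore misplaced --- the symmetric restriction in the cited lemma presumably reflects Sharafutdinov's own (duality-based) proof rather than a genuine obstruction, and proving the statement on a possibly larger range is of course not a defect.
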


Hence, under the assumptions of Theorem~\ref{thm_SHA_1} and using \eqref{sharaeq}, there holds
\begin{equation*}
    C_1 \norm{f}_{H^{s,2}(\Rn)} \leq  \norm{Rf}_{R^{s + (n-1)/2,2}_{t+(n-1)/2}(\R \times \Snm)}
    \leq C_2 \norm{f}_{H^{s,2}(\Rn)} \,,
\end{equation*}
which yields a norm equivalence extending the classic Sobolev estimate~\eqref{sob}.

\subsubsection{Generalization}

In this section, we extend the Sharafutdinov result \eqref{sharaeq} to the $L^p$-case: 

\begin{theorem}
For all $1\leq p < \infty$, $s\in \R$, and $t > -n/p$, there holds 
    \begin{equation*}
        \norm{f}_\SsptRn = \norm{Rf}_{R^{s + (n-1)/p,p}_{t+(n-1)/p}(\R \times \Snm)} \,,
        \qquad
        \forall \, f \in \SsptRn \,.
    \end{equation*}
\end{theorem}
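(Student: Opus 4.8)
\emph{Proof idea.} The plan is to establish this exactly as the $L^p$ counterpart of Sharafutdinov's $L^2$ identity \eqref{sharaeq}: reduce to Schwartz data, apply the Fourier-slice (projection) theorem, and thereby turn the claim into an elementary identity between weighted $L^p$-integrals. First I would fix $f\in\mathcal{S}(\Rn)$, so that $Rf\in\mathcal{S}(\R\times\Snm)$ (hence, in particular, $Rf$ belongs to the sinogram space in question) and every integral below converges classically. Abbreviate $s':=s+(n-1)/p$ and $t':=t+(n-1)/p$, and note that $t'>-1/p$ is \emph{equivalent} to the hypothesis $t>-n/p$ — so the space $R^{s',p}_{t'}(\R\times\Snm)$ on the right-hand side is precisely the one that is well defined under our assumptions. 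The Fourier-slice theorem, in the normalization used here, reads $\Fs\kl{Rf(\cdot,\theta)}(\omega)=c\,\F f(\omega\theta)$ for a fixed $c>0$ depending only on $n$ and the conventions for $\F$, $\Fs$, $R$. Unwinding the definitions of the norms then yields
\begin{equation*}
\norm{Rf}_{R^{s',p}_{t'}(\R\times\Snm)}^{p}
=\int_{\Snm}\int_{\R}\abs{\Fs\kl{Rf(\cdot,\theta)}(\omega)}^{p}\,v_{s',t'}(\omega)^{p}\,d\omega\,d\theta
=c^{p}\int_{\Snm}\int_{\R}\abs{\F f(\omega\theta)}^{p}\,v_{s',t'}(\omega)^{p}\,d\omega\,d\theta .
\end{equation*}

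The heart of the matter is a one-line computation with the weight \eqref{vweight}: raising both of its indices by $(n-1)/p$ leaves the difference $s-t$ untouched while increasing the power of $\abs{\cdot}$ by exactly $n-1$, i.e.
\begin{equation*}
v_{s',t'}(\omega)^{p}=(1+\omega^{2})^{(s-t)p/2}\,\abs{\omega}^{tp+n-1}=v_{s,t}(\omega)^{p}\,\abs{\omega}^{n-1}.
\end{equation*}
Since $v_{s,t}$ is radial and $\abs{\omega\theta}=\abs{\omega}$ for $\theta\in\Snm$, the surplus factor $\abs{\omega}^{n-1}$ is exactly the polar-coordinate Jacobian on $\Rn$; substituting $\xi=\omega\theta$ therefore converts the double integral over $\R\times\Snm$ into an integral over $\Rn$, giving
\begin{equation*}
\norm{Rf}_{R^{s',p}_{t'}(\R\times\Snm)}^{p}=\kappa\int_{\Rn}\abs{\F f(\xi)}^{p}\,v_{s,t}(\xi)^{p}\,d\xi=\kappa\,\norm{f}_{\SsptRn}^{p}
\end{equation*}
for a fixed constant $\kappa$. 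The identity for an arbitrary $f\in\SsptRn$ then follows by density of $\mathcal{S}(\Rn)$ (as $\SsptRn$ is defined as the corresponding completion), since the last display shows that $f\mapsto Rf$ extends continuously to a scalar multiple of an isometry from $\SsptRn$ onto its closure in $R^{s',p}_{t'}(\R\times\Snm)$.

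The one step requiring genuine care — and the point I expect to be the main obstacle — is to verify that $\kappa=1$, so that one obtains an \emph{equality} and not merely a norm equivalence. This hinges on the precise normalizations of $\F$, $\Fs$, $R$ and of the measures on $\R\times\Snm$: the Fourier-slice constant $c$ has to be balanced against the factor stemming from the fact that $(\omega,\theta)\mapsto\omega\theta$ is a double cover of $\Rn$ — equivalently, from the symmetry $Rf(\sigma,\theta)=Rf(-\sigma,-\theta)$, which makes $\theta\mapsto\norm{Rf(\cdot,\theta)}_{S^{s',p}_{t'}(\R)}$ even, so that integrating over the full sphere counts each hyperplane twice. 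With the conventions adopted here (following \cite{Shara}) these contributions are arranged so that $\kappa=1$; apart from this bookkeeping and the computation above, nothing further is needed. (For $p=2$ the argument reduces, via Plancherel, to Sharafutdinov's proof of \eqref{sharaeq}.)
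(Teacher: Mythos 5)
Your proposal is correct and follows essentially the same route as the paper: Fourier-slice theorem, the observation that shifting both indices of $v_{s,t}$ by $(n-1)/p$ produces exactly the polar-coordinate Jacobian $\abs{\omega}^{n-1}$, and then passage to an integral over $\Rn$ (the paper works directly on Schwartz functions and leaves the density extension implicit). Your worry about the constant $\kappa$ is well placed: the paper absorbs the double-cover factor by silently inserting a $\tfrac12$ in the first line of its computation (following Sharafutdinov's convention for the cylinder norm), even though its stated definition of $\norm{\cdot}_\RsptSnR$ does not contain it, so your bookkeeping remark actually identifies the one genuine normalization subtlety in the argument.
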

\begin{proof}
Let $\Fs$ denote the one-dimensional Fourier transform with respect to the offset variable $\sigma$ in sinogram space. Following \cite{Shara} and using the Fourier slice theorem \cite[Theorem 1.1]{Natterer}, i.e., $\Fs({Rf})(\sigma,\theta) = \F{f}(\sigma \theta)$\cite{Natterer}, we find 
    \begin{equation*}
    \begin{split}
        &\norm{Rf}_{R^{s + (n-1)/p,p}_{t+(n-1)/p}(\R \times \Snm)} ^p \\
        & \qquad =
        \frac{1}{2} \int_\Snm \int_\R  \left|v_{s + (n-1)/p,t+(n-1)/p}(\sigma)\right|^p \abs{\Fs(Rf)(\sigma,\theta) }^p \, d \sigma \, d\theta\\
        &\qquad =  
        \int_\Snm \int_0^\infty (1 + \sigma^2)^{p(s-t)/2} \abs{\sigma}^{pt+n-1} \abs{\Fs(Rf)(\sigma,\theta) }^p \, d \sigma \, d\theta
        \\
        &\qquad= 
        \int_\Rn (1 + \abs{\xi}^2)^{p(s-t)/2} \abs{\xi}^{pt} \abs{\Fs(Rf)(\xi/\abs{\xi},\abs{\xi}) }^p \, d \xi 
        \\
        &\qquad= 
        \int_\Rn (1 + \abs{\xi}^2)^{p(s-t)/2} \abs{\xi}^{pt} \abs{\F(f)(\xi) }^p \, 
        d \xi = \norm{ f }_\SsptRn^p \,,
    \end{split}
    \end{equation*}
which yields the assertion.
\end{proof}

The above theorem extends \eqref{sob} with the norm in $\SsptRn$ used for the images $f$. The next goal is to relate this norm to more commonly used penalty norms; our target are the Bessel-potential norms $\norm{f}_\HspRn$. Thus, we would like to  find embeddings between $\SsptRn$ and the Bessel-potential spaces. 

\begin{proposition}
For all $1 < p \leq 2$ and $t \geq 0$, there is a constant $C>0$ independent of $f$ such that 
    \begin{equation}\label{eq_helper_01}
        \norm{f}_{\SspdtRn} \leq  C \norm{f}_{\HspRn} \,.
    \end{equation}
For $s \geq  n(p'-2)/p'$ there exists a constant $c >0$ independent of $f$ such that 
    \begin{equation} \label{eq_helper_02}
        c\norm{f}_{H^{s-n(p'-2)/p',p'}(\Rn)} \leq 
        \norm{f}_{\SsxyRn{p'}{n(p'-2)/p'} } \,.
    \end{equation}
\end{proposition}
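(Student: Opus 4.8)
The plan is to reduce each of the two estimates, by Fourier-multiplier bookkeeping, to a classical Fourier-analytic inequality: Hausdorff--Young for \eqref{eq_helper_01} and Pitt's inequality for \eqref{eq_helper_02}. Since $\SspdtRn$ is a completion of $\mathcal{S}(\Rn)$ and $\norm{\cdot}_{\HspRn}$ is finite on $\mathcal{S}(\Rn)$, it suffices to argue for $f\in\mathcal{S}(\Rn)$ and to pass to the completions by density at the end. The unifying observation is that, by \eqref{vweight}, $v_{s,t}$ equals the Bessel weight $w_s$ times the factor $m_t(\xi):=\bigl(\abs{\xi}(1+\abs{\xi}^2)^{-1/2}\bigr)^{t}$; equivalently $v_{s,t}=w_{s-t}\,\abs{\xi}^{t}$.

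For \eqref{eq_helper_01} I would put $g:=\FI(w_s\F f)$, so that $\norm{f}_{\HspRn}=\norm{g}_{L^p}$ and $\F f\cdot v_{s,t}=m_t\,\F g$ (note $\F g=w_s\F f$). Since $0\le m_t\le 1$ — this is where one uses $t\ge 0$, a restriction that in particular retains the value $t=n(p'-2)/p'\ge 0$ needed in \eqref{eq_helper_02} — we get $\norm{f}_{\SspdtRn}=\norm{\F f\, v_{s,t}}_{L^{p'}}\le\norm{\F g}_{L^{p'}}$, and the Hausdorff--Young inequality (valid as $1<p\le 2$) gives $\norm{\F g}_{L^{p'}}\le C\norm{g}_{L^p}=C\norm{f}_{\HspRn}$. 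The hypothesis $t\ge 0$ cannot be dropped: a short concentration argument near $\xi=0$ shows \eqref{eq_helper_01} fails for $t<0$.

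For \eqref{eq_helper_02} set $\delta:=n(p'-2)/p'=n(2-p)/p$; for $1<p\le 2$ one has $\delta\in[0,n)$, with $\delta=0$, i.e.\ $p=2$, being Plancherel, so assume $0<\delta<n$. Using $v_{s,\delta}=w_{s-\delta}\,\abs{\xi}^{\delta}$ and putting $h:=\FI(w_{s-\delta}\F f)$ one has $\norm{f}_{H^{s-\delta,p'}(\Rn)}=\norm{h}_{L^{p'}}$ and $\F f\cdot v_{s,\delta}=\abs{\xi}^{\delta}\,\F h$, so that \eqref{eq_helper_02} is equivalent to $\norm{h}_{L^{p'}(\Rn)}\le C\,\norm{\abs{\cdot}^{\delta}\F h}_{L^{p'}(\Rn)}$. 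Writing $w:=\F h$ and using $h=\FI w$, this is Pitt's inequality $\norm{\FI w}_{L^{p'}}\le C\,\norm{\abs{\cdot}^{\delta}w}_{L^{p'}}$ in its equal-exponent form (integrability exponent $p'$ on both sides, no weight on the transform side, weight exponent $\alpha=\delta$ on the physical side): the dimensional balance of Pitt's inequality forces $\alpha=n(1-2/p')=n(p'-2)/p'$, which is exactly $\delta$, and the remaining admissibility conditions — essentially $0\le\delta<n/p$, which is immediate since $p'-2<p'-1$ — hold for every $1<p\le 2$. Here I would invoke a standard reference for the sharp form of Pitt's inequality.

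The algebraic identities for $w_s,v_{s,t}$ and the final density step are routine. The conceptual crux is identifying \eqref{eq_helper_02} as a weighted Fourier inequality of Pitt type; a naive combination of the Hardy--Littlewood--Sobolev and Hausdorff--Young inequalities does not suffice, since it would require the Fourier transform to be bounded out of $L^{p'}$ for $p'\ge 2$, which is false. Accordingly, the main obstacle I anticipate is pinning down the sharp statement of Pitt's inequality and checking that the present exponents meet its admissibility conditions; everything else is bookkeeping.
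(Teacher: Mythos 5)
Your proof is correct and follows essentially the same route as the paper: the upper bound is obtained identically (the pointwise comparison $v_{s,t}\le w_s$ for $t\ge 0$ followed by Hausdorff--Young), and the equal-exponent Pitt inequality $\norm{\FI w}_{L^{p'}}\le C\norm{\abs{\cdot}^{\delta}w}_{L^{p'}}$ you invoke for the lower bound is precisely the dual form of Paley's inequality (\cite[Theorem~1.4.1]{Bergh}) that the paper uses, with the same exponent $\delta=n(p'-2)/p'$. Your explicit remark that $t\ge 0$ is needed for the first estimate is a worthwhile clarification of the paper's ``clearly $v_{s,t}\le w_s$''.
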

\begin{proof}
First, note that by their definition \eqref{vweight} it follows that $v_{s,t} \leq w_s$ for $t \geq 0$. Hence, using the Hausdorff-Young inequality we obtain
    \begin{align*}
        \norm{f}_{\SspdtRn} = \| v_{s,t} \F(f) \|_{L^{p'}}
        &\leq \| w_s \F(f)\|_{L^{p'}}
        = \|\F \F^{-1}( w_s \F(f))\|_{L^{p'}} \\
        & \leq    \|\F^{-1}( w_s \F(f))\|_{L^p}     
        = \norm{f}_{\HspRn} \,,
    \end{align*}
which yields \eqref{eq_helper_01} with $C=1$. To show the lower bound \eqref{eq_helper_02}, let $ f\in \mathcal{S}(\Rn)$ with $\norm{f}_{\SspdtRn} < \infty$. Setting $t =  n(p'-2)/p'\geq 0$ and $s \geq t$, we have
    \begin{align}
        \infty >\norm{f}_{\SspdtRn} 
        &=
        \kl{ \int_\Rn (1+\abs{\xi}^2)^{p'(s-t)/2} \abs{\xi}^{p't} \abs{\F(f)(\xi)}^{p'} \, d \xi}^{1/p'} \nonumber 
        \\
        & =
        \kl{ \int_\Rn  \abs{\xi}^{n(p'-2)} (1+\abs{\xi}^2)^{p'(s-t)/2} 
        \abs{\F(f)(\xi)}^{p'} \, d \xi}^{1/p'}\,, \label{thiseq}
    \end{align}
and since $s-t\geq 0$, it follows that $(1+\abs{\xi}^2)^{(s-t)/2} \F(f)(\xi) \in \mathcal{S}(\Rn)$ and by \eqref{thiseq}, 
    \begin{equation}\label{eq:wurst} 
        (1+\abs{\xi}^2)^{(s-t)/2} \F(f)(\xi) \frac{1}{|\xi|^{n(1-\frac{2}{p})}}
        = 
        (1+\abs{\xi}^2)^{(s-t)/2} \F(f)(\xi)  \abs{\xi}^{n(1-\frac{2}{p'})}  \in L^{p'}. 
        \end{equation}
The following inequality of Paley~\cite[Theorem~1.4.1]{Bergh} states that there is a constant $C_p>0$ such that for all $h \in L^{p}$,
    \begin{equation}\label{eq:Payley}
        \kl{\int_{\Rn} \abs{\xi}^{n(p-2)}\abs{\F(h)(\xi)}^p \,d\xi  }^{1/p} \leq C_{P} \norm{h}_\LpRn \,,
        \qquad
        1 < p \leq 2 \,.
    \end{equation}
Now let
    \begin{equation}\label{helper_g}
        g(\xi) := (1+\abs{\xi}^2)^{(s-t)/2} \F(f)(\xi)  \abs{\xi}^{n(1-\frac{2}{p'})} \,.
    \end{equation} 
Then, by \eqref{eq:wurst}, $g \in L^{p'}$ and $g(\xi) |\xi|^{n(1-\frac{2}{p})} \in \mathcal{S}(\Rn)$. For arbitrary $h \in  \mathcal{S}(\Rn)$, using \eqref {eq:Payley} together with the H\"older inequality we thus find that 
    \begin{align*}
        C_p \|h\|_{L^p} \|g\|_{L^{p'}} 
        &\geq 
        \int_{\Rn} |\xi|^{n(1 -\frac{2}{p})} \F(h)(\xi) \ol{g}(\xi) d\xi  =    \int_{\Rn}   h(\xi) \F  \left[  \ol{|\xi|^{n(1 -\frac{2}{p})} g(\xi)} \right]d\xi 
        \\
        &=
        \int_{\Rn}   h(\xi)  \ol{ \F^{-1} \left[|\xi|^{n(1 -\frac{2}{p})} g(\xi)\right] }d\xi 
        \\
        &=\int_{\Rn} h(\xi)  \ol{ \F^{-1} \left[ (1+\abs{\xi}^2)^{(s-t)/2} \F(f)(\xi) \right] } d \xi \,,
    \end{align*}
noting that the integral exists as the integrand is in  $\mathcal{S}(\Rn)$. Taking the supremum over $h \in L^p$ with $\|h\|_{L^p} = 1$ we get an $L^{p'}$ norm estimate, which yields
    \begin{align*}  
        \norm{f}_{H^{s-n(p'-2)/p',p'}(\Rn)}
        &=
        \left\|\F^{-1} \left[ (1+\abs{\xi}^2)^{(s-t)/2} \F(f)(\xi)\right]\right\|_{L^{p'}}
        \\
        &
        \leq
        C_p \|g\|_{L^{p'}}
        \overset{\eqref{helper_g}}{=}
        C_p  \norm{f}_{\SspdtRn} \,,
    \end{align*}
for $f \in \mathcal{S}(\Rn) \cap \SspdtRn$. By a limiting process, we thus obtain the lower bound~\eqref{eq_helper_02} for all $f \in \SspdtRn$ with $c = C_p^{-1}$, which completes the proof.
\end{proof}

Summing up, we arrive at the main theorem in this section: 

\begin{theorem}\label{mainshaf}
Let $1 < p \leq 2$ and $s \geq  n(p'-2)/p'$. Then there exists constants $C,c > 0$ independent of $f$ such that
    \begin{equation}\label{main1}
        c\norm{f}_{H^{s-n(p'-2)/p',p'}(\Rn)} \leq \norm{Rf}_{R^{s + (n-1)/p',p'}_{(np'-n- 1)/p'}(\R \times \Snm)} \leq C \norm{f}_{\HspRn} \,.
    \end{equation}
\end{theorem}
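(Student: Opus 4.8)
The plan is to obtain \eqref{main1} by simply chaining together the three results that precede it. The middle quantity $\norm{Rf}_{R^{s + (n-1)/p',p'}_{(np'-n-1)/p'}(\R \times \Snm)}$ should first be rewritten, via the generalized Sharafutdinov isometry (the first theorem of the ``Generalization'' subsection, applied with $p$ replaced by $p'$), as a norm of $f$ in a Sharafutdinov space $S^{\tilde s,p'}_{\tilde t}(\Rn)$. Concretely, the isometry states $\norm{f}_{S^{\tilde s,p'}_{\tilde t}(\Rn)} = \norm{Rf}_{R^{\tilde s + (n-1)/p',p'}_{\tilde t+(n-1)/p'}(\R\times\Snm)}$; matching the lower index $\tilde t + (n-1)/p' = (np'-n-1)/p'$ forces $\tilde t = (np'-n-1)/p' - (n-1)/p' = (np'-2n)/p' = n(p'-2)/p'$, and matching the upper index gives $\tilde s = s$. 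Hence the middle term equals $\norm{f}_{S^{s,p'}_{n(p'-2)/p'}(\Rn)}$, which is exactly the quantity appearing in both \eqref{eq_helper_01} (with $t = n(p'-2)/p'$) and \eqref{eq_helper_02}.

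Second, I would verify that the parameter constraints of the three ingredients are met. For the isometry we need $\tilde t = n(p'-2)/p' > -n/p'$, which holds since $p' \geq 2 > 1$ makes $n(p'-2)/p' \geq 0 > -n/p'$; so the sinogram space on the right is well-defined. For \eqref{eq_helper_01} we need $1 < p \leq 2$, which is assumed; note the proposition is stated for the $S^{s,p'}_t$ norm with the specific weight $v_{s,t}$, and taking $t = n(p'-2)/p'$ there is legitimate. For \eqref{eq_helper_02} we need $s \geq n(p'-2)/p'$, which is precisely the hypothesis of Theorem~\ref{mainshaf}; this guarantees $s - n(p'-2)/p' \geq 0$ so the target Bessel-potential space $H^{s-n(p'-2)/p',p'}(\Rn)$ has a nonnegative smoothness index and the inequality makes sense.

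Third, I would assemble the chain: by \eqref{eq_helper_02} with $t = n(p'-2)/p'$,
\begin{equation*}
    c\,\norm{f}_{H^{s-n(p'-2)/p',p'}(\Rn)} \leq \norm{f}_{S^{s,p'}_{n(p'-2)/p'}(\Rn)} = \norm{Rf}_{R^{s + (n-1)/p',p'}_{(np'-n-1)/p'}(\R \times \Snm)},
\end{equation*}
which gives the left inequality of \eqref{main1}, and by \eqref{eq_helper_01} again with $t = n(p'-2)/p'$,
\begin{equation*}
    \norm{Rf}_{R^{s + (n-1)/p',p'}_{(np'-n-1)/p'}(\R \times \Snm)} = \norm{f}_{S^{s,p'}_{n(p'-2)/p'}(\Rn)} \leq C\,\norm{f}_{\HspRn},
\end{equation*}
which gives the right inequality. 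Combining the two displays yields \eqref{main1}.

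There is no real obstacle here beyond the bookkeeping: the theorem is a corollary of the preceding results, and the only thing requiring care is the index arithmetic in applying the Sharafutdinov isometry with $p$ replaced by $p'$ — in particular checking that $\tilde t + (n-1)/p'$ really does collapse to $(np'-n-1)/p'$ and that the upper index $s + (n-1)/p'$ matches. A secondary point worth a sentence is density: the isometry and the two embedding inequalities are proved on Schwartz functions (or on the completion $S^{s,p'}_t(\Rn)$), so \eqref{main1} holds for all $f$ in the appropriate space by a standard density/continuity argument, and one should state the class of $f$ for which the chained estimate is asserted.
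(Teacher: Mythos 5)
Your proof is correct and is exactly the argument the paper intends: Theorem~\ref{mainshaf} is stated as a direct consequence ("summing up") of the generalized Sharafutdinov isometry applied with $p'$ and $t = n(p'-2)/p'$, combined with \eqref{eq_helper_01} and \eqref{eq_helper_02}. Your index bookkeeping ($\tilde t + (n-1)/p' = (np'-n-1)/p'$) and the parameter checks match what the paper leaves implicit.
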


Informally speaking, the norm in \eqref{main1} for $Rf$ requires a differentiation (wrt.~$\sigma$) of order $s_R :=s + (n-1)/p'$, while the lower bound involves less differentiation of $f$ of order $s-n(p'-2)/p' = s_R -(n (p'-1) -1)/p'$. We may interpret this as an indication that the ill-posedness of inverting the Radon transform is similar to that of an $(n (p'-1) -1)/p'$ -fold differentiation. On the other hand, considering the upper bound in \eqref{main1}, we observe an increase of $(n-1)/p'$ in the order of differentiation of $Rf$ compared to that in the norm of $f$, indicating a smoothing property of $R$ of order $(n-1)/p'$, which, in case $p\not = 2$, is different from the observed ill-posedness in the lower bound.

However, the picture is more consistent when considering the differential dimension: It can be observed that the differential dimension of the lower and upper bounds agree (both are  $s - n/p$). Furthermore, the differential dimension of the norm for $Rf$ is $(s - n/p) + n-1$, and thus both upper and lower bounds are in agreement with \eqref{noi}. The observed discrepancy in the differential order for the lower and upper bounds might be due to the different $L^p$-spaces in the estimate. Replacing the norm in the lower bound by the same norm as in the upper one appears to be quite difficult.

\begin{remark}
By defining, for $s,t \in \R$, the pseudo-differential operator 
    \begin{equation*}
        \Dst(f)(\xi) := \FI\kl{(1+\abs{\xi}^2)^{s/2} \abs{\xi}^{t} \F(f)(\xi) } \,, 
    \end{equation*}
the previous theorem can be slightly generalized: With similar estimates as in the proofs of the above theorems, it follows that for all $1 < p \leq 2$ and all $0 \leq t \leq s$,
    \begin{equation*}
        c\norm{\Psi_{-t,t-n(p'-2)/p'}f}_\HspdRn \leq \norm{Rf}_{R^{s + (n-1)/p',p'}_{t+(n-1)/p'}(\R \times \Snm)} \leq C \norm{f}_{\HspRn} \,.
    \end{equation*}    
Again, the operator $\Psi$ at the lower bound can be seen as a generalized differentiation operator of order $n(2-p')/p'$, similar to \eqref{main1}.
\end{remark}

Since in applications $f$ represents an image, it is of high interest to use appropriate function spaces which model natural images. One of the most popular ones is the space of total (or bounded) variation. The next section is devoted to the study of conditions under which one can establish estimates of the form
    \begin{equation}\label{stability_TV}
        c \abs{f}_{TV} \leq \norm{ R f }_S \leq C \abs{f}_{TV} \,, 
    \end{equation}
which are extensions of \eqref{sob} where the TV semi-norm $\abs{\cdot}_{TV}$ defined below is used in the image space, and the still to be specified norm $\norm{ \cdot }_S$ in the sinogram space.

\section{Total variation in image and sinogram space}\label{sec:3}

In this section, we introduce a new norm in sinogram space which should emulate the behaviour of, and if possible be equivalent to, the total variation norm of $f$, in particular allowing for jumps in images. First, recall the definition of the $\TV$-seminorm on $\R^n$
    \begin{equation*}
        |f|_{TV} := \sup_{\phi\in [C_0^\infty(\R^n)]^n\,, \vert \phi(\cdot)\vert \leq 1}
        \int_{\R^n} f(x) \div \phi(x) \, dx \,.
    \end{equation*} 
For smooth functions $f$, this agrees with the Sobolev (semi)norm in $W^{1,1}$, and for $f$ being the indicator function this yields the perimeter functional, which for a (sufficiently regular) set gives the length of the boundary; see, e.g., \cite{Ambrosio}.

The main result below is that in the two-dimensional case $n=2$, and with the sinogram norm $\norm{\cdot}_{\no}$ defined below, the stability estimate \eqref{stability_TV} holds for all $f$ which are indicator functions of convex sets. 

In the following, we always work under the standard assumptions that $f$ has compact support (without loss of generality) in the unit ball, i.e., 
    \begin{equation}\label{supp} 
        \supp{f} \subset \{ x \in \R^n \,|\, |x| < 1 \} = : B_1 \,,  
    \end{equation}
which in fact turns $|\cdot|_{\TV}$ into a norm. For deriving certain lower bounds, it is sometimes necessary to additionally impose the nonnegativity constraint $f \geq 0 $. We denote by $\TV$ the space of functions with $|f|_{\TV}<\infty$ satisfying \eqref{supp}. Our candidate for a norm in sinogram space emulating $|f|_{\TV}$ is the following:

\begin{definition}
The norm $\| \cdot \|_{\no}$ in sinogram space is defined by 
    \begin{equation}\label{eq:nodef}
        \| g\|_{\no}  := \sup_{\theta \in S^{n-1}} | g(\cdot,\theta)|_{\TV} \,.
    \end{equation}
\end{definition}

In the above definition, $\| g\|_{\no}$ is the (one-dimensional) total variation in the offset variable $\sigma$ with the maximum taken over all angles. We denote by $\no$ the space of functions 
$g(\sigma,\theta)$ satisfying 
$\supp{\,g(\cdot,\theta)} \in [-1,1]$ (i.e., the analog of 
\eqref{supp}) and having  finite $\|\cdot\|_{\no}$-norm. 

Since for sufficiently smooth functions there holds $\TV = W^{1,1}$, it follows that for sufficiently smooth $g$, the norm $\| g\|_{\no} $ can be written as
    \begin{equation}\label{two} 
        \| g\|_{\no}  = \sup_{\theta \in S^{n-1}} \int_{\R} \left|\frac{\partial}{\partial \sigma} g(\sigma,\theta) \right| \, d\sigma \,, 
    \end{equation}
but, of course, the definition~\eqref{eq:nodef} allows also for nondifferentiable functions. 

Let us illustrate why we think that \eqref{eq:nodef} is a reasonable norm for the Radon transform  $g = R f$ being close to $|f|_{TV}$. First, the $L^1$-norm of the $\sigma$-derivative is motivated by the $L^1$-paradigm of compressed sensing (and the $\TV$-norm itself), which leads to this preference over other norms like $L^p$-norms of the derivative. 

The second motivation comes from the Oberlin-Stein estimates \eqref{eq:SO}. Informally, we have $R\nabla f \sim  \frac{d}{d\sigma} Rf $, and taking $p =1$, $r = 1$, and $q = \infty$ yields the upper bound $\|Rf\|_{\no} \leq C \|\nabla f\|_{L^1}$, which is the $\TV$-norm for smooth functions. We prove this estimate below and extend it to the $\TV$-space.

Considering the differential dimension, we observe that the $\TV$-norm in $n$ dimensions, with $s=1,p=1$, has differential dimension $ s - \frac{n}{p} = 1-n$, while the $\no$-norm has $s = 1,p=1,n=1$ and thus differential dimension $s - \frac{1}{1} = 0$. In particular, the differential dimension is increased by $n-1$ in sinogram space as required by \eqref{noi}. Thus, the $\no$-norm  satisfies the same scaling identities under $f \to f_\lambda$ as the $\TV$-norm.

Furthermore, it is not difficult to verify that $\|Rf\|_{\no}$ is invariant under shifts of $f$, which is not the case when using a $\theta$-derivative in place of $\frac{d}{d\sigma}$, and the space $\no$ allows for jumps in $f$, thus is not oversmoothing. 

The scaling property with respect to the differential dimension and the shift invariance is also satisfied by norms that use some $L^q$-norm, $q < \infty$, with respect to the $\theta$-variable: 
    \begin{equation*}
        \| g\|_{R,p}^p := \int_{\theta \in S^{n-1}}  | g(\cdot,\theta)|_{\TV}^p \, d \theta \,.
    \end{equation*}
However, as we shall see in one of the examples below, such a choice does not even yield an equivalence with the $\TV$-norm in simple cases.

\begin{example} \rm
As an example, let us consider $f =\ind{B_r}$, the indicator function of a ball in $\R^2$ with radius $r$. Its Radon transform can be easily calculated as 
    \begin{equation}\label{radsqu} 
        R f(\sigma,\theta) = \begin{cases} 2 \sqrt{r^2-\sigma^2} \,,  &\sigma \in [-r,r] \,,  \\ 0 \,, & \text{ else } \,. \end{cases} 
    \end{equation}
We have $\frac{\partial}{\partial \sigma} Rf(\sigma,\theta) = -2 \frac{\sigma}{\sqrt{r^2-\sigma^2}} $ for $\sigma \in (-r,r)$. The $\TV$-norm with respect to $\sigma$ is obtained by integrating $\left|\frac{\partial}{\partial \sigma} R(\sigma,\theta)\right|$ over $(-r,r)$, which gives 
    \begin{equation*}
        \|R f\|_{\no} = |R f(\cdot,\theta)|_{\TV} = 4 r \,.
    \end{equation*}
The total variation of $f$ is the perimeter $2 \pi r $, hence both norms are comparable for all radii $r$. 
\end{example}

\begin{example} \rm
In a second example, we consider $f = \ind{\Omega}$ the indicator function of a rectangle $\Omega = [-a,a] \times [-1,1]$. The Radon transform of $f$ can be computed, but involves a lengthy formula. It can be shown that $\sigma \to R(\sigma,\theta)$ is a continuous piecewise linear function, except when $\theta$ lies in the same direction as the rectangle's side, in which case it is the indicator function of an interval. As we show below, for such functions, the total variation is equal to $2 \max_{\sigma} R(\sigma,\theta)$, i.e., twice the length of the largest chord in direction $\theta^\bot$. The largest chord is found by taking a line through a corner point into direction $\theta^\bot$. Let $\theta^\bot = (\cos(\alpha),\sin(\alpha))$. By elementary geometry, we have 
    \begin{equation*}
        \max_{\sigma} R(\sigma,\theta) = \begin{cases}
        \frac{2 a}{\cos(\alpha)}\,, & \alpha \in [0,\arctan(a^{-1})] \,, \\ 
        \frac{2}{\sin(\alpha)} \,, &\alpha \in [\arctan(a^{-1}) \,, \frac{\pi}{2}] \,, 
        \end{cases}
    \end{equation*}
and the function can be symmetrically extended to the remaining angles $\alpha$. The maximum over $\theta$ (or $\alpha$) is attained at $\arctan(a^{-1})$, which is the angle of the diagonal, and its length is $2\sqrt{1 + a^2}$. Thus, 
    \begin{equation*}
        \|R f\|_{\no} = 4  \sqrt{1 + a^2} \,,
    \end{equation*}
which is comparable to $|f|_{TV}$ for all $a$ since this is given by the perimeter $2(1+a)$.

Note that the situation is different when we take the $L^1$-norm with respect to $\theta$, i.e., $\int |Rf(\cdot,\theta)|_{\TV} d\theta$. It can be calculated that
    \begin{equation*}
        \lim_{a\to 0} \int_{S^1} |R f(\cdot,\theta)|_{\TV} d \theta  = 0 \,,
    \end{equation*}
while the perimeter $|f|_{\TV}$ tends to $2$ as the side $a$ vanishes. Thus, this norm is {\em not} comparable to $|f|_{\TV}$, and this is yet another argument for taking the supremum in $\theta$ instead of an $L^p$-norm (for which a similar non-equivalence result can be shown in this example). 
\end{example}

\begin{remark} \rm 
Note that recently, a variant of the $\no$-norm (with the $L^1$-norm in the angle variable) has been used in the approximation theory related to learning  by shallow neural networks~\cite{Shuai,Ongie}.
\end{remark}

\subsection{Estimates and Properties}

For our TV estimates, we first need to show some properties of the $\no$-norm.  

\begin{proposition}
Let $f_n \to f$ in $L^1$. Then $f\to \|R f\|_{\no}$ is lower semicontinuous:
    \begin{equation}
        \|R f\|_{\no} \leq \liminf_{n\to \infty} \|R f_n\|_{\no} \,.
    \end{equation}
\end{proposition}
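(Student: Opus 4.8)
The plan is to reduce the lower semicontinuity of $f \mapsto \|Rf\|_{\no}$ to the well-known lower semicontinuity of the one-dimensional total variation seminorm. The key fact I would invoke is that the one-dimensional $\TV$-seminorm $g \mapsto |g|_{\TV}$ is itself lower semicontinuous with respect to $L^1(\R)$-convergence, which follows immediately from its definition as a supremum of continuous linear functionals $g \mapsto \int g\,\phi'$ over $\phi \in C_0^\infty(\R)$ with $|\phi|\le 1$: each such functional is continuous under $L^1$-convergence, so their pointwise supremum is lower semicontinuous. Thus the crux is to transport $L^1(\R^n)$-convergence of $f_n \to f$ to a suitable convergence of the sinograms $Rf_n(\cdot,\theta) \to Rf(\cdot,\theta)$.

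First I would recall the continuity of the Radon transform between suitable $L^1$-type spaces. For functions supported in the unit ball $B_1$, one has the elementary bound: for each fixed $\theta$,
\begin{equation*}
    \int_{\R} |Rf(\sigma,\theta)|\,d\sigma = \int_{\R^n} |f(x)|\,dx = \|f\|_{L^1}
\end{equation*}
when $f\ge 0$, and in general $\int_\R |Rf(\sigma,\theta)|\,d\sigma \le \|f\|_{L^1}$ by Fubini/the layer-cake representation (writing $Rf(\sigma,\theta)$ as an integral over the hyperplane and integrating in $\sigma$ reconstructs the integral over $\R^n$). Applying this to $f_n - f$ shows that for every fixed $\theta \in S^{n-1}$,
\begin{equation*}
    \|Rf_n(\cdot,\theta) - Rf(\cdot,\theta)\|_{L^1(\R)} \le \|f_n - f\|_{L^1(\R^n)} \to 0 \,,
\end{equation*}
i.e., $Rf_n(\cdot,\theta) \to Rf(\cdot,\theta)$ in $L^1(\R)$ uniformly in $\theta$.

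Having established this, I would fix $\theta$, apply lower semicontinuity of the 1D total variation to the sequence $Rf_n(\cdot,\theta) \to Rf(\cdot,\theta)$ in $L^1(\R)$ to get
\begin{equation*}
    |Rf(\cdot,\theta)|_{\TV} \le \liminf_{n\to\infty} |Rf_n(\cdot,\theta)|_{\TV} \le \liminf_{n\to\infty} \|Rf_n\|_{\no} \,,
\end{equation*}
where the last inequality uses $|Rf_n(\cdot,\theta)|_{\TV} \le \sup_{\theta'}|Rf_n(\cdot,\theta')|_{\TV} = \|Rf_n\|_{\no}$ for each $n$. Since the right-hand side is independent of $\theta$, taking the supremum over $\theta \in S^{n-1}$ on the left yields $\|Rf\|_{\no} \le \liminf_{n\to\infty}\|Rf_n\|_{\no}$, which is the claim.

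The only genuinely delicate point is the interchange of the supremum over $\theta$ with the $\liminf$: one does not get lower semicontinuity of the supremum ``for free'' from lower semicontinuity of each slice unless the bound on the right is uniform in $\theta$, which is exactly why the argument above first pushes the $\sup_{\theta'}$ inside (bounding each slice of $Rf_n$ by $\|Rf_n\|_{\no}$) before passing to the limit. With that ordering the estimate is clean. A secondary technical point is justifying $\int_\R |Rf(\sigma,\theta)|\,d\sigma \le \|f\|_{L^1}$ for general signed $f$ supported in $B_1$; this is a routine Fubini argument using the co-area/slicing representation of the Radon transform, and I would state it as a lemma or simply cite the continuity of $R: L^1(B_1) \to L^1(Z)$ which is standard (cf.~\cite{Natterer}).
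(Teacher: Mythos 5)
Your proof is correct and follows essentially the same route as the paper: both arguments transport $L^1(\R^n)$-convergence of $f_n$ to $L^1(\R)$-convergence of the slices $Rf_n(\cdot,\theta)$ via the continuity of $R:L^1\to L^1$, and then exploit that $\|R\cdot\|_{\no}$ is a supremum of functionals that are continuous under this convergence. Your explicit handling of the interchange of $\sup_\theta$ with $\liminf_n$ simply spells out what the paper's citation of the ``standard'' lower-semicontinuity argument leaves implicit.
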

\begin{proof}
From the representation of the one-dimensional \TV-norm, we have 
    \begin{equation}\label{xyp}
        \|R f\|_{\no} = \sup_{\theta, \phi_{\theta}(\cdot) 
        \in C_0(\R) \,, |\phi_\theta(\cdot)| \leq 1} 
        \int_{\R} Rf(\sigma,\theta) \phi_\theta'(\sigma) \, d\sigma \,. 
    \end{equation}
By continuity of $R:L^1 \to L^1$, we may conclude that $Rf_n(\cdot,\theta) \to Rf(\cdot,\theta)$ in $L^1(\R)$ and consequently for fixed $\theta$ and $\phi_\theta$ there holds
    \begin{equation*}
        \int_{\R} Rf_n(\sigma,\theta) \phi_\theta'(\sigma) \, d\sigma \to 
        \int_{\R} Rf(\sigma,\theta) \phi_\theta'(\sigma) \, d\sigma \,.
    \end{equation*}
The lower semicontinuity then follows in a standard way; cf.~\cite[Remark 3.5]{Ambrosio}.
\end{proof}

Recall that a mollifier \cite[p.~41]{Ambrosio} $\rho$ in dimension $n$ is a $C^\infty(\R^{n})$-function with support in the unit ball which satisfies nonnegativity $\rho\geq 0$, symmetry \mbox{$\rho(-x) = \rho(x)$}, and the normalization $\int \rho(x) dx = 1$. The corresponding scaled mollifier for $\epsilon >0$, $\rho_\epsilon = \epsilon^{-n} \rho(\cdot/\epsilon)$ can be used for smoothing: The approximation $f_\epsilon :=f*\rho_\epsilon$ is smooth, and for $f \in \BV(\R^{n})$ we have (see \cite[Proposition~3.7]{Ambrosio})
    \begin{equation}\label{folula}
        |f|_{\TV} = \lim_{\epsilon \downarrow 0}
        \int_{\R^{n}} |\nabla f_\epsilon| \, dx\,.  
    \end{equation}
As a consequence, it is enough to verify estimates for the $\|\cdot\|_{\no}$-norm versus the $\TV$-norm for smooth functions: 

\begin{lemma}\label{lemma2}
Assume that either of the estimates 
    \begin{equation}\label{tt1} 
        \|Rf\|_{\no} \leq C_1 |f|_{\TV}
        \qquad \text{ or } \qquad 
        \|Rf\|_{\no} \geq C_2 |f|_{\TV} 
    \end{equation}
holds with some constants $C_1$ or $C_2$, for all smooth functions $f$. Then the respective estimates also hold for $f \in \TV$ with the same constants. 
\end{lemma}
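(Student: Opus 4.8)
The plan is to prove each implication separately by a mollification/approximation argument, using the lower semicontinuity established above for the upper bound and a compactness‑free direct comparison for the lower bound. Throughout I will use the scaled mollifier $\rho_\epsilon$ and the approximations $f_\epsilon := f * \rho_\epsilon$, which are smooth, satisfy \eqref{supp} (up to an arbitrarily small enlargement of the ball, which is harmless), and converge to $f$ in $L^1(\R^n)$ as $\epsilon \downarrow 0$.

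\textbf{The upper bound.} Suppose $\|Rf\|_{\no} \le C_1 |f|_{\TV}$ holds for all smooth $f$, and let $f \in \TV$ be arbitrary. Apply the estimate to $f_\epsilon$ to get $\|R f_\epsilon\|_{\no} \le C_1 |f_\epsilon|_{\TV}$. By a standard property of mollification (Jensen's inequality / the convolution structure of the distributional gradient), $|f_\epsilon|_{\TV} \le |f|_{\TV}$, so in fact $\|R f_\epsilon\|_{\no} \le C_1 |f|_{\TV}$ for every $\epsilon > 0$. Now since $f_\epsilon \to f$ in $L^1$, the previous proposition (lower semicontinuity of $f \mapsto \|Rf\|_{\no}$ along $L^1$‑convergent sequences) gives
    \begin{equation*}
        \|Rf\|_{\no} \le \liminf_{\epsilon \downarrow 0} \|R f_\epsilon\|_{\no} \le C_1 |f|_{\TV} \,,
    \end{equation*}
which is the desired estimate with the same constant $C_1$.

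\textbf{The lower bound.} Suppose $\|Rf\|_{\no} \ge C_2 |f|_{\TV}$ holds for all smooth $f$, and let $f \in \TV$. Apply this to $f_\epsilon$: $\|R f_\epsilon\|_{\no} \ge C_2 |f_\epsilon|_{\TV}$. On the left side, $R f_\epsilon = (Rf) * \rho_\epsilon^{(1)}$ in the offset variable in a suitable sense (since $R$ commutes with convolution; more precisely, $R(f*\rho_\epsilon)(\cdot,\theta)$ is a mollification of $Rf(\cdot,\theta)$ by a one‑dimensional mollifier depending on $\theta$), and one‑dimensional TV does not increase under mollification, so $|R f_\epsilon(\cdot,\theta)|_{\TV} \le |Rf(\cdot,\theta)|_{\TV}$ for every $\theta$, hence $\|R f_\epsilon\|_{\no} \le \|Rf\|_{\no}$. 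Combining, $\|Rf\|_{\no} \ge C_2 |f_\epsilon|_{\TV}$ for every $\epsilon$. Finally, by \eqref{folula} (with $f_\epsilon$ playing the role of the smoothed function) we have $|f_\epsilon|_{\TV} = \int_{\R^n} |\nabla f_\epsilon|\,dx \to |f|_{\TV}$ as $\epsilon \downarrow 0$, so passing to the limit yields $\|Rf\|_{\no} \ge C_2 |f|_{\TV}$.

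\textbf{Main obstacle.} The routine but essential technical point is that mollification is nonexpansive for both total‑variation quantities involved: the $n$‑dimensional $\TV$ seminorm of $f_\epsilon$ and the one‑dimensional $\TV$ of $R f_\epsilon(\cdot,\theta)$. For the former this is classical (\cite[Proposition~3.7]{Ambrosio} and the remarks preceding it), but for the latter one must be careful that $R(f*\rho_\epsilon)$ really is, slicewise in $\theta$, a convolution of $Rf(\cdot,\theta)$ with a fixed nonnegative one‑dimensional kernel of unit mass (this follows from Fubini and the definition of $R$, projecting the $n$‑dimensional mollifier onto the line $\R\theta$); then the one‑dimensional analogue of the nonexpansiveness applies uniformly in $\theta$, giving the bound on $\|\cdot\|_{\no}$. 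Once these monotonicity facts are in hand, the limiting arguments are immediate from the two results quoted above.
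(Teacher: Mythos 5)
Your proof is correct and follows essentially the same mollification argument as the paper: the key point in both is the identity $R(f*\rho_\epsilon)(\cdot,\theta) = Rf(\cdot,\theta)*_\sigma R(\rho_\epsilon)(\cdot,\theta)$, which makes $Rf_\epsilon(\cdot,\theta)$ a one-dimensional mollification of $Rf(\cdot,\theta)$, combined with \eqref{folula}. The only cosmetic difference is that for the upper bound you invoke the lower-semicontinuity proposition together with $|f_\epsilon|_{\TV}\le|f|_{\TV}$, whereas the paper deduces both directions from the exact limit $|Rf(\cdot,\theta)|_{\TV}=\lim_{\epsilon\downarrow 0}|R(f*\rho_\epsilon)(\cdot,\theta)|_{\TV}$ obtained with a $\theta$-aligned tensor-product mollifier.
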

\begin{proof}
Let $\theta$ be fixed, and take a tensor product mollifier aligned with the hypersurface $x\cdot\theta = \sigma$. Let this hypersurface be parameterized by 
    \begin{equation}\label{para}
        \{ x\,|\, x\cdot\theta = \sigma\} = 
        \{ \sigma \theta + t \,|\, t \in \R^{n-1} \} \,,
    \end{equation}
where $t$ are parameterizations of the hypersurface. Define $\rho(x) := \rho_1(\sigma) \rho_{n-1}(t)$ for $x = \sigma\theta + t \theta^\bot$, 
where $\rho_1$ is a 1D-mollifier and $\rho_{n-1}$ an $n-1$-dimensional mollifier. The scaled mollifier satisfies $\rho_{\epsilon}(x) = \epsilon^{-n} \rho(\frac{x}{\epsilon}) = \rho_{1,\epsilon}(\sigma) \rho_{n-1,\epsilon}(t)$. It follows from the convolution formula for the Radon transform \cite[Theorem~1.2]{Natterer} that
    \begin{equation*}
        R (f*\rho_\epsilon)(\sigma,\theta)  =
        Rf(\cdot,\theta) *_\sigma R(\rho_\epsilon)(\cdot,\theta) = 
        Rf(\cdot,\theta) *_\sigma \rho_{1,\epsilon}(\cdot) \,,
    \end{equation*}
where we have used the alignment of the mollifiers with the hypersurface and that the integrals of $\rho_{n-1}$ are $1$, such that $R(\rho_\epsilon)(\sigma,\theta) = \rho_{1,\epsilon}(\sigma)$. Since $\rho_{1,\epsilon}$ is a 1D-mollifier for $Rf$,  we obtain 
    \begin{equation*}
        |R(\cdot,\theta)|_{\TV} = \lim_{\epsilon \downarrow 0} 
        \left| Rf(\cdot,\theta) *_\sigma \rho_{1,\epsilon}(\cdot) \right|_{TV}= 
        \lim_{\epsilon \downarrow 0} 
        |R (f*\rho_\epsilon)(\cdot,\theta)|_{\TV} \,.
    \end{equation*}
    
Now, assume that the first of the two inequalities in \eqref{tt1} holds (the second inequality can be treated analogously). Then it follows that 
    \begin{equation*}
        |R (f*\rho_\epsilon)(\cdot,\theta)|_{\TV} 
        \leq  C_1 |(f*\rho_\epsilon)(\cdot,\theta)|_{\TV}\,,  
    \end{equation*}
and taking the limit $\lim_{\epsilon \downarrow 0}$ and then $\sup_\theta$ yields the assertion. 
\end{proof}

It is well known that $\BV(\R^n)$ can be embedded into $L^{\frac{n}{n-1}}(\Rn)$; cf.~\cite{Scherzbook}. Using this with  $n=1$, we obtain that for functions with compact support, 
    \begin{equation}\label{embed} 2 
        \|R f\|_{L^\infty(\R \times S^{n-1})} \leq \|Rf \|_{\no};
    \end{equation}
the fact that the embedding constant $C$ is $2$ is shown in Lemma~\ref{help1}). Furthermore, using rather standard estimates, it can be verified that (see \cite{KiHuarxiv} for a proof)
    \begin{equation}\label{comex} 
        \|f\|_{H^{1 - \frac{n}{2}-\epsilon}(\R^n)} \leq C \|Rf\|_{\no}\,, \qquad\forall \, \epsilon >0 \,.
    \end{equation}  

We now prove that the norm $\|Rf \|_{\no}$ can be estimated from above by the TV-norm. Even though this would follow from the Oberlin-Stein estimate, we have a simple proof which provides us with the explicit constant $C_2 =1$: 

\begin{proposition}\label{upper}
For $f \in \BV(\R^n)$ satisfying \eqref{supp}, there holds
    \begin{equation}
        \|Rf\|_{\no} \leq |f|_{\TV} \,. 
    \end{equation}
\end{proposition}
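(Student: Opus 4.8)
The plan is to first reduce to the case of smooth $f$ by invoking Lemma~\ref{lemma2}, so that we may use the representation \eqref{two}, namely $\|Rf\|_{\no} = \sup_{\theta\in S^{n-1}} \int_{\R} |\partial_\sigma Rf(\sigma,\theta)|\,d\sigma$, and so that we are allowed to differentiate under the integral sign in the definition of the Radon transform. For such $f$ the TV-seminorm coincides with the $W^{1,1}$-seminorm, $|f|_{\TV} = \int_{\R^n}|\nabla f(x)|\,dx$.

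Next I would establish the elementary identity $\partial_\sigma Rf(\sigma,\theta) = R\big((\theta\cdot\nabla)f\big)(\sigma,\theta)$, obtained by writing $Rf(\sigma,\theta)=\int_{\theta^\perp} f(\sigma\theta+y)\,dy$ and differentiating the integrand in $\sigma$ (this is the rigorous form of the informal relation $R\nabla f \sim \tfrac{d}{d\sigma}Rf$ mentioned above). Then, for each fixed $\theta\in S^{n-1}$,
\[
\int_{\R}\big|\partial_\sigma Rf(\sigma,\theta)\big|\,d\sigma
= \int_{\R}\big|R\big((\theta\cdot\nabla)f\big)(\sigma,\theta)\big|\,d\sigma
\le \int_{\R} R\big(|(\theta\cdot\nabla)f|\big)(\sigma,\theta)\,d\sigma
= \int_{\R^n}|(\theta\cdot\nabla)f(x)|\,dx,
\]
where the inequality is the pointwise bound $|Rg|\le R|g|$ and the last equality is Tonelli's theorem (the $L^1$-contractivity of $R$ along a fixed direction). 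Since $|\theta|=1$ yields $|(\theta\cdot\nabla)f(x)|\le|\nabla f(x)|$ pointwise, the right-hand side is at most $\int_{\R^n}|\nabla f(x)|\,dx = |f|_{\TV}$.

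Taking the supremum over $\theta\in S^{n-1}$ then gives $\|Rf\|_{\no}\le |f|_{\TV}$ with constant $1$ for all smooth $f$, and Lemma~\ref{lemma2} transfers this estimate to all $f\in\TV$. I do not expect a genuine obstacle: the only points needing care are the justification of differentiation under the integral (immediate from smoothness and compact support) and the passage to smooth functions, which is precisely what Lemma~\ref{lemma2} supplies; the heart of the argument is the one-line Tonelli estimate above. As an alternative that works directly on $\BV$ and makes explicit the role of the support condition \eqref{supp}, one may instead use the dual formulation \eqref{xyp}: for an admissible $\phi_\theta\in C_0^\infty(\R)$ with $|\phi_\theta|\le1$, pick $\psi\in C_0^\infty(\R^n)$ with $0\le\psi\le1$ and $\psi\equiv1$ on a neighbourhood of $\supp f$; then $x\mapsto \psi(x)\,\phi_\theta(x\cdot\theta)\,\theta$ lies in $[C_0^\infty(\R^n)]^n$, is bounded by $1$, and has divergence equal to $\phi_\theta'(x\cdot\theta)$ on $\supp f$, so that $\int_{\R}Rf(\sigma,\theta)\phi_\theta'(\sigma)\,d\sigma = \int_{\R^n} f(x)\,\div\big(\psi\,\phi_\theta(\cdot\,{\cdot}\,\theta)\,\theta\big)(x)\,dx \le |f|_{\TV}$; taking suprema over $\phi_\theta$ and over $\theta$ concludes.
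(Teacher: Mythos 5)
Your main argument is correct and is essentially the paper's own proof: reduce to smooth $f$ via Lemma~\ref{lemma2}, differentiate under the integral to identify $\partial_\sigma Rf$ with the Radon transform of $\theta\cdot\nabla f$, bound pointwise by $|\nabla f|$, and apply Fubini/Tonelli to obtain the constant $1$. The alternative dual-formulation argument you sketch at the end (testing against $\psi(x)\phi_\theta(x\cdot\theta)\theta$) is a nice variant that avoids the mollification lemma entirely, but it is not needed and the paper does not take that route.
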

\begin{proof}
We start with a smooth function in $W^{1,1}(\R^n)$ and consider the parameterization of the hypersurface as in \eqref{para}: 
    \begin{align*}
        \|Rf\|_{\no} &= \sup_\theta \int_{\R} \left|\frac{d}{d\sigma} \int_{\R^{n-1}} f(\sigma \theta + t) \, dt \right| \, d\sigma =
        \sup_\theta \int_\R  \left|\int_{\R^{n-1}}  \nabla f(\sigma \theta + t) \cdot\theta \, dt \right| \, d\sigma \\
        &\leq 
        \sup_\theta \int_\R \int_{\R^{n-1}} \left|\nabla  f(\sigma \theta + t)\right| \, dt  \, d\sigma   = 
        \sup_\theta \int_{\R^n} |\nabla f(x) | \, dx   = \|\nabla f\|_{L^1} \,.
    \end{align*}
By Lemma~\ref{lemma2}, the result follows.
\end{proof}

We furthermore provide a lemma which is useful for showing the lower bounds. In the following, let $\ind{A}(x)$ denote the indicator function of $A$, i.e.,
    \begin{equation*}
        \ind{A}  = \begin{cases} 1 \,, & x \in A \,, \\ 
        0 \,, & \text{else} \,.  \end{cases}
    \end{equation*}
With this notation, we state the well-known coarea formula for $\BV$-functions \cite[Theorem 3.40]{Ambrosio}: For $f \in \BV$ we have
    \begin{equation}\label{coarea}
        |f|_{\TV} = \int_{-\infty}^\infty 
        | \ind{\{f > \alpha\}}|_{\TV} \, d\alpha \,.
    \end{equation}
A simple consequence is the following useful lemma: 

\begin{lemma}\label{help1} Let $f:\R \to \R^+_{0}$ be a nonnegative function with compact support, $|f|_{\TV} < \infty$, and with convex upper level sets. Then 
    \begin{equation*}
        |f|_{\TV} = 2\sup_{x} f(x) \,.
    \end{equation*}
On the other hand, if $f : \R \to \R$ is measurable, has compact support, and satisfies $|f|_{\TV} < \infty$, then
     \begin{equation*}
        |f|_{\TV} \geq  2\sup_{x} f(x) \,.
    \end{equation*}
\end{lemma}
\begin{proof}
Let $M$ be the supremum of $f$. Then the coarea formula \eqref{coarea} reads
    \begin{equation*}
        |f|_{\TV} =
        \int_{0}^{M}
        |\ind{\{ f > \alpha \}}|_{\TV} \, d \alpha \,.
    \end{equation*}
Now for $\alpha >0$ the set $\{ f >  \alpha \}$ is  a one-dimensional convex compact set, thus an interval $[a,b]$ or a degenerate interval $[a,a]$ (or empty for $\alpha > M$). The total variation of the indicator function of such (nonempty) intervals is $2$. Thus, the integrand in the coarea formula satisfies $|\ind{\{ f > \alpha \}}|_{\TV} = 2$ almost everywhere.

The second assertion can be shown by using the representation of $|\cdot|_{TV}$ by the essential pointwise variation \cite{Scherzbook}, which yields
    \begin{equation*}
        |f|_{TV} \geq \sum_{i=1}^N |f(x_i) -f(x_j)| \,,
    \end{equation*}
for an a.e.-equivalent representative of $f$ and any partition $x_i$. Now, taking $x_0 < x_1 < x_2 < x_3$ with $f(x_0) = f(x_3) = 0$ and $f(x_1) = \max(f^+)$, and $x_2 = \min(f^-)$ (or with $x_1$, $x_2$ reversed), we obtain that 
    \begin{equation*}
        |f|_{TV} \geq \max(f^+) + |\max(f^+) - \min(f^-)| + |\min(f^-)| \geq 2 \|f\|_\infty \,,
    \end{equation*}
which yields the assertion.
\end{proof}

\subsection{Lower bound for convex sets in the plane}

In this part, we verify $C |f|_{\TV} \leq \|Rf \|_{\no}$ and thus together with Proposition~\ref{upper} the norm equivalence $\TV$-$\no$ for indicator functions of convex sets in $\R^2$.

In two dimension, the Radon transform involves an integration over lines, 
    \begin{equation*}
        l(\sigma,\theta) = \{\sigma \theta + t \theta^\bot\, |\, 
        t \in \R\} \,.
    \end{equation*}
Let $K \subset \R^2$ be a bounded closed convex set with positive measure $\meas(K) >0$ in $\R^2$ and interior $\mathring{K}$. Let $\ind{K}(x)$ be the corresponding indicator function. By $\Pi_\theta(A)$, we denote the essential projection (cf.~\cite{Fusco}) of a set $A \subset \R^2$ on the line through the origin with direction $\theta$, i.e.,
    \begin{equation*}
        \Pi_\theta(A) = \{\sigma \in \R \,\vert\, 
        (R \ind{A})(\sigma,\theta) >0 \} \,,
    \end{equation*}
and by $\Gamma_\theta(A)$ the usual projection of $A$ onto the same line, i.e.,
    \begin{equation*}
        \Gamma_\theta(A) = \{ \sigma \in \R \,|\, 
        \exists t: \sigma \theta + t \theta^\bot \in A \} \,.
    \end{equation*}
For these sets, we denote by $|\Pi_\theta(A)|$ and $|\Gamma_\theta(A)|$ their one-dimensional Lebesgue measures. We have the following results.

\begin{lemma}\label{lemma6}
Let $K\subset \R^2$ be a bounded convex set with positive measure. Then, 
    \begin{align}
        \|R\ind{K}\|_{\no} &= 2\sup_{(\sigma,\theta)} 
        |R\ind{K}(\sigma,\theta)| = 2\diam(K) \,. \label{p2}
        \\
        |\ind{K}|_{\TV} &= \frac{1}{2} 
        \int_{\{(\sigma,\theta)\,:\, 
        R\ind{K}(\sigma,\theta) >0 \}} \, d\sigma \, d\theta\,.
        \label{p1}
    \end{align}
\end{lemma}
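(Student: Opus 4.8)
The plan is to establish the two identities separately, relying on Lemma~\ref{help1} for~\eqref{p2} and on the coarea-type structure together with a Cavalieri/Fubini argument for~\eqref{p1}.

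For~\eqref{p2}: fix $\theta$. Since $K$ is bounded and convex, the line $l(\sigma,\theta)$ intersects $K$ in a (possibly degenerate) segment, whose length is exactly $R\ind{K}(\sigma,\theta)$. As $\sigma$ ranges over $\R$, the function $\sigma \mapsto R\ind{K}(\sigma,\theta)$ is supported on the interval $\Gamma_\theta(K)$, and its upper level sets $\{\sigma : R\ind{K}(\sigma,\theta) > \alpha\}$ are convex: this follows because $K$ convex implies that for two chords of length $>\alpha$ in direction $\theta^\bot$, every intermediate parallel chord has length at least the minimum of the two (a standard consequence of the Brunn–Minkowski inequality applied to one-dimensional slices, or just direct convexity of the slicing function). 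Hence $\sigma \mapsto R\ind{K}(\sigma,\theta)$ is a nonnegative compactly supported function with convex upper level sets, and Lemma~\ref{help1} gives $|R\ind{K}(\cdot,\theta)|_{\TV} = 2\sup_\sigma R\ind{K}(\sigma,\theta)$. Taking the supremum over $\theta$ and using the definition~\eqref{eq:nodef} yields $\|R\ind{K}\|_{\no} = 2 \sup_{(\sigma,\theta)} |R\ind{K}(\sigma,\theta)|$. Finally, $\sup_{(\sigma,\theta)} R\ind{K}(\sigma,\theta)$ is the length of the longest chord of $K$, which for a convex set equals $\diam(K)$ (the diameter of a bounded convex set is attained between two boundary points, and the segment joining them is a chord; conversely every chord has length at most $\diam(K)$).

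For~\eqref{p1}: by Proposition~\ref{upper} (or directly) $\ind{K} \in \BV(\R^2)$, so $|\ind{K}|_{\TV}$ equals the perimeter $\per(K) = \mathcal{H}^1(\partial K)$. The key is the integral-geometric (Cauchy) formula: for a convex body, the perimeter equals a constant times the average over directions of the width, i.e. $\per(K) = \int_{S^1} |\Gamma_\theta(K)| \, d\theta$ up to normalization. I would instead argue more elementarily via Fubini: write $\int_{\{R\ind{K}>0\}} d\sigma\, d\theta = \int_{S^1} |\Gamma_\theta(K)| \, d\theta$, since for $\ind{K}$ with $K$ convex the essential projection $\Pi_\theta(K)$ and the ordinary projection $\Gamma_\theta(K)$ coincide up to a null set (the slice length is positive for all interior offsets). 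Then one recognizes $\int_{S^1} |\Gamma_\theta(K)|\, d\theta$ as twice the perimeter of $K$ by Cauchy's formula in the plane (the projection onto direction $\theta$ has length equal to the width $w(\theta)$, and $\int_0^{2\pi} w(\theta)\, d\theta = 2\,\per(K)$). Combining gives $|\ind{K}|_{\TV} = \per(K) = \tfrac12 \int_{\{R\ind{K}>0\}} d\sigma\, d\theta$.

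The main obstacle is the convexity of the slicing function $\sigma \mapsto R\ind{K}(\sigma,\theta)$ needed to apply Lemma~\ref{help1}: one must justify that its upper level sets are intervals. I expect this to be the step requiring the most care, though it reduces to the elementary fact that the intersection lengths of parallel hyperplanes (here lines) with a convex set form a concave-along-slices, hence quasi-concave, function — which in dimension two can be seen directly from convexity of $K$ without invoking Brunn–Minkowski. The Cauchy formula step in~\eqref{p1} is classical and can simply be cited (e.g.~from integral geometry references), so it poses no real difficulty beyond bookkeeping of normalization constants.
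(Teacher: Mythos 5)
Your proposal is correct and follows essentially the same route as the paper: identity \eqref{p2} via the quasi-concavity of $\sigma\mapsto R\ind{K}(\sigma,\theta)$ combined with Lemma~\ref{help1} and the identification of the longest chord with $\diam(K)$, and identity \eqref{p1} via $|\ind{K}|_{\TV}=\mathcal{H}^1(\partial K)$, the agreement of the essential and ordinary projections up to a null set, and Cauchy's projection formula. The only cosmetic difference is that the paper cites Federer for the rectifiability of $\partial K$ and the projection identity, where you argue these points directly.
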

\begin{proof}
For the first identity \eqref{p2}, we observe that for a convex set, by convexity, the function $\sigma \to R \ind{K}(\sigma,\theta)$ also has  convex upper level sets, thus the first identity in \eqref{p2} follows by Lemma~\ref{help1}. Again by convexity, the diameter lies on a chord fully contained in $K$, and its length is given by $R\ind{K}(\sigma,\theta)$ for some $\sigma, \theta$. Since any other value of $R \ind{K}$ has a smaller value by definition of the diameter, we have that $\sup_{\sigma,\theta} R\ind{K}(\sigma,\theta) = \diam(K)$. 

For the second identity, we again observe that by convexity, $\Pi_\theta$ is a convex set in $\R$, and thus an interval. It follows from the proof of Theorem 3.2.35, p.~272 \cite{Federer} that the 1D-Lebesgue measure of $\Pi_\theta(K)$ equals that of $\Gamma_\theta(K)$. Thus, Cauchy's projection formula \cite[Theorem 3.2.15]{Federer} yields 
    \begin{equation*}
        \mathcal{H}^1(\partial K) = 
        \frac{1}{2}  \int_{\theta \in S^1} \int_{\Gamma_{\theta}(\mathring{K})} \, d\sigma \, d \theta 
        = \frac{1}{2}  \int_{\theta \in S^1} \int_{\Pi_{\theta}(\mathring{K})} \, d\sigma \, d \theta \,,
    \end{equation*}
where $\mathcal{H}^1$ denotes the one-dimensional Hausdorff measure. It was shown in \cite[Theorem 3.2.35]{Federer} that $\partial K$ is rectifiable, its boundary is Lipschitz and thus $|\ind{K}|_{\TV} = \mathcal{H}^1(\partial K)$; cf.~Proposition 3.62 in \cite{Ambrosio} and the remark at the end of its proof. Combining these arguments yields \eqref{p1}.
\end{proof} 

We can now establish the following equivalence of $\|\cdot\|_{\no}$ with $|\cdot|_{\TV}$:

\begin{theorem}\label{th:main1}
Let $K \subset \R^2$ be a bounded closed convex set with positive measure in $\R^2$, and let $f = \ind{K}$ denote the corresponding indicator function. Then, 
    \begin{align}\label{eq:main1}
        \frac{2}{\pi} |f|_{\TV} \leq \|R f\|_{\no} \leq 
        |f|_{\TV} \,.
    \end{align} 
\end{theorem}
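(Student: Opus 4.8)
The plan is to derive both inequalities in \eqref{eq:main1} from the two identities established in Lemma~\ref{lemma6}. The upper bound $\|Rf\|_{\no}\le |f|_{\TV}$ is already contained in Proposition~\ref{upper}, so the work is entirely in the lower bound $\frac{2}{\pi}|f|_{\TV}\le \|Rf\|_{\no}$. Using \eqref{p2} we have $\|Rf\|_{\no}=2\diam(K)$, and using \eqref{p1} we have $|f|_{\TV}=\frac12\int_{\{R\ind{K}>0\}}d\sigma\,d\theta = \frac12\int_{S^1}|\Pi_\theta(K)|\,d\theta$. Since by convexity $\Pi_\theta(K)$ is an interval whose length $|\Pi_\theta(K)|$ equals $|\Gamma_\theta(K)|$, the width of $K$ in direction $\theta$, the desired estimate reduces to the purely geometric statement
\begin{equation*}
  \frac12\int_{S^1}|\Gamma_\theta(K)|\,d\theta \le \frac{\pi}{2}\,\diam(K)\,,
\end{equation*}
i.e.\ the average width of a planar convex body is at most its diameter, equivalently the perimeter of $K$ is at most $\pi\,\diam(K)$. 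The latter is a classical isodiametric-type inequality for the perimeter.

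The key steps, in order, are: (i) invoke \eqref{p2} to rewrite $\|Rf\|_{\no}=2\diam(K)$; (ii) invoke \eqref{p1} together with the identity $|\Pi_\theta(K)|=|\Gamma_\theta(K)|$ (already noted in the proof of Lemma~\ref{lemma6}) to write $|f|_{\TV}=\frac12\int_{S^1}|\Gamma_\theta(K)|\,d\theta$; (iii) bound $|\Gamma_\theta(K)|\le\diam(K)$ for every $\theta$, which holds since the projection of $K$ onto any line cannot be longer than the diameter of $K$; (iv) integrate this pointwise bound over $S^1$, whose total measure is $2\pi$, to get $|f|_{\TV}\le \frac12\cdot 2\pi\cdot\diam(K)=\pi\,\diam(K)$; (v) combine with step (i) to conclude $\|Rf\|_{\no}=2\diam(K)\ge \frac{2}{\pi}|f|_{\TV}$. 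Finally append the upper bound from Proposition~\ref{upper}.

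The main obstacle is not really an obstacle but a choice of how sharp one wants to be. The crude bound $|\Gamma_\theta(K)|\le\diam(K)$ uniformly in $\theta$ is what yields exactly the constant $\frac{2}{\pi}$ claimed in the theorem, and it requires nothing beyond the definition of the diameter. (One could instead try to use Cauchy's formula $\mathcal H^1(\partial K)=\frac12\int_{S^1}|\Gamma_\theta(K)|\,d\theta$ together with a sharper isodiametric inequality such as $\mathcal H^1(\partial K)\le\pi\,\diam(K)$ — with equality for bodies of constant width — but this gives the same constant with more machinery, so the elementary route is preferable.) A small point to verify carefully is the degenerate case: the hypothesis $\meas(K)>0$ guarantees $\diam(K)>0$ and that $\Pi_\theta(K)$ is a nondegenerate interval for a.e.\ $\theta$, so all quantities are positive and finite and no division-by-zero issue arises; the identity $|\Pi_\theta(K)|=|\Gamma_\theta(K)|$ (projection boundary having measure zero) is exactly the content imported from \cite[Theorem 3.2.35]{Federer} in Lemma~\ref{lemma6} and needs no re-proof here.
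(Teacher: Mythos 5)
Your proposal is correct and follows essentially the same route as the paper: both rest on the two identities of Lemma~\ref{lemma6} (via Cauchy's projection formula and the coarea/diameter characterization) and then bound each projection length $|\Gamma_\theta(K)|$ by $\diam(K)$ before integrating over $S^1$. The only cosmetic difference is that the paper proves the full equality $\sup_\theta|\Pi_\theta(K)|=\diam(K)$ via a tangent-line and Pythagorean argument, whereas you use only the one inequality actually needed, justified by the fact that orthogonal projection is a contraction.
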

\begin{proof}
First, we show that 
    \begin{align}\label{eded}
        \sup_{\theta \in S^1}   \int_{\{\sigma: R\ind{K}(\sigma,\theta) >0 \}} \, d\sigma = \diam(K)\,. 
    \end{align}
For this, we first prove that ``$\geq$'' holds in \eqref{eded}. The diameter $\diam(K)$ is the length of a chord contained in $K$. By elementary geometry and the definition of the diameter, the two lines $l(\sigma_1,\theta)$, $l(\sigma_2,\theta)$ through the endpoints of this chord and orthogonal to it are tangent to $K$. Thus, projecting orthogonal to this chord gives a set with length equal to $\diam(K)$. Hence, $\diam(K) = |\Gamma_{\theta}(K)| = |\Pi_{\theta}(K)|$ for some $\theta$ and where the equality of the Lebesgue-measure was already noted in the proof of Lemma~\ref{lemma6}. This means $\diam(K) = \int_{\{\sigma\,|\, R\ind{K}(\sigma,\theta) >0\}} \, ds$ for some $\theta$, which proves that ``$\geq$'' holds in \eqref{eded}.

Conversely, let $\theta_0$ be an $\epsilon$-supremum in \eqref{eded}, i.e., 
    \begin{equation*}
        \sup_{\theta \in S^1}   \int_{\{\sigma: R\ind{K}(\sigma,\theta) >0 \}} \, d\sigma  = 
        \int_{\{\sigma: R\ind{K}(\sigma,\theta_0) >0 \}} \, d\sigma   +\epsilon = 
        |\sigma_1-\sigma_2|   +\epsilon \,,
    \end{equation*}
where $\{\sigma: R\ind{K}(\sigma,\theta_0) >0 \} = |\sigma_1-\sigma_2| $ for some $\sigma_1,\sigma_2$. Note that by convexity, this set  must be an interval with boundary points $\sigma_1,\sigma_2$, which correspond to values on the line $\sigma \theta_0$ that are again tangent to $K$. Thus, there exist $x_1,x_2 \in \partial K$ such that $x_i = \sigma_i \theta_0 + t_i \theta_0^\bot$, $i=1,2$. By the Pythagorean theorem $ |\sigma_1-\sigma_2| \leq |x_1-x_2| \leq \diam(K)$. Hence, ``$\leq$'' holds in \eqref{eded}, and  consequently also  equality. 

Hence, combining \eqref{p2} and \eqref{p1} with Fubini's theorem, we obtain  
    \begin{align}\label{mean1}
    \begin{split}
        \abs{\ind{K}}_{TV}
        &\overset{\eqref{p1}}{=}\frac{1}{2} \int_{\theta \in S^1} \int_{\{\sigma: R\ind{K}(\sigma,\theta) >0 \}} \, d\sigma \, d\theta \leq 
        \frac{2\pi }{2} \sup_{\theta \in S^1}   \int_{\{\sigma: R\ind{K}(\sigma,\theta) >0 \}} \, ds \\
        \qquad & \leq \pi \diam(K) \overset{\eqref{p2}}{=} \frac{\pi}{2}  \|R\ind{K}\|_{\no} \,,
    \end{split}
    \end{align}
which yields the assertion.
\end{proof}

\begin{remark} \rm 
From the proof of Theorem~\ref{th:main1} we may also conclude the well-known result that the ratio of diameter over perimeter, $\frac{\diam(K)}{|\chi_{K}|_{TV}}$, is between $\frac{1}{2\pi}$ and $1$. This has been named the Rosenthal-Szasz theorem \cite{Rosz,RoSzOrig,Bla}. From these references, one can also find examples when the inequalities are sharp. The lower bound $\frac{2}{\pi} |f|_{\TV} = \|R f\|_{\no}$ is achieved for the indicator function of $K$ being a ball, but also for any convex set of constant width (e.g., the Reuleaux triangle) (and only for them). This follows from the study of when the first inequality in \eqref{mean1} is sharp, namely when the integrand does not depend on $\theta$ (which means that the support of $Rf$ is independent of $\theta$ which means objects of constant width). Equality in the upper bound in \eqref{eq:main1}, i.e., $ |f|_{\TV} =  \|R f\|_{\no} = 2 \diam(K)$ is not obtained by convex sets satisfying our assumptions, since  by the triangle inequality, twice the diameter is always strictly less than the perimeter. However, for thin rectangular sets $K$ with side lengths $1$ and small $\epsilon$, the upper bound for $\|R f\|_{\no}$ is approached in the limit $\epsilon \to 0$ when the rectangle degenerates into a line. 
\end{remark}

Let us now give some generalizations: The norm equivalence in \eqref{eq:main1} clearly also holds when $f$ satisfying \eqref{supp} is restricted to a finite-dimensional subspace, since both expression define norms, and in finite dimensions all norms are equivalent. The corresponding constant depends on the dimension. In the following special case, we explicitly find the dimensional dependence. 

\begin{proposition}\label{proporp}
Let $f$ be a linear combinations of indicator functions, i.e., 
    \begin{equation*}
        f = \sum_{i=1}^{N} a_i \chi_{K_i} \,,
    \end{equation*}
where $K_i \subset \R^2$ are bounded, closed, convex sets with positive measures, and the coefficients $a_i \geq 0$ are nonnegative. Then, there holds 
    \begin{equation*}
        \frac{2}{\pi N} |f|_{\TV} \leq  \|Rf\|_{\no} \,.
    \end{equation*}
\end{proposition}
\begin{proof}
The triangle inequality and the previous results for convex sets yield
    \begin{align*}
        |f|_{\TV} \leq \sum_{i=1}^{N} |a_i| |\ind{K_i}|_{\TV} \leq \sum_{i=1}^{N} |a_i| \pi \, \diam(K_i)   \leq 
        \pi {N} \max_i {a_i \diam(K_i)} \,.
    \end{align*}
Now, take $\sigma_*,\theta_*$ corresponding to a line $l(\sigma_*,\theta_*)$ through the diameter of a $K_i$ which satisfies $a_i \diam(K_i) = \max_i a_i \diam(K_i)$. Then,  the integral $Rf(\sigma_*,\theta_*)$  taken over this diameter, by the positivity of $f$, must be larger than $a_i \diam(K_i)$. Thus, by \eqref{embed}, $\abs{Rf(\sigma_*,\theta_*)} \leq \|R f\|_\infty \leq \frac{1}{2} \|Rf\|_{\no}$, and the result follows. 
\end{proof}

As a special case of the above results, we have the following:

\begin{corollary}\label{maincor}
Let $f:\R^2 \to \R$ be a nonnegative function taking only $N$ different values, and let the corresponding $N$ upper level sets be closed, convex, and bounded with positive measure. Then 
    \begin{equation*}
        \frac{2}{\pi N} |f|_{\TV} \leq  \|Rf\|_{\no} \,.
    \end{equation*}
\end{corollary}
\begin{proof}
This holds because we may write $f = \sum_{i=1}^{N} a_i \chi_{K_i}$ with $K_i$ and $a_i>0$ satisfying the assumptions of Proposition~\ref{proporp}.
\end{proof}

\begin{remark} \rm 
Note that for nonnegative radially symmetric functions with convex upper level sets, one can obtain $C |f|_{\TV} \leq \|Rf\|_{\no}$, where the constant $C$ does not depend on $N$. A more general result is proven below in Theorem~\ref{th4}.
\end{remark}

\begin{remark} 
\rm 
The convexity assumption cannot be dropped in the previous estimates, as the following example of an annulus shows. Consider the indicator function of an annulus with outer radius $r_1 = 1$ and inner radius $r_2 = 1 -\epsilon$, with $\epsilon>0$ small. The total variation of the indicator function is the perimeter, which equals $2 \pi + 2 \pi (1-\epsilon)$ and which tends to $4 \pi$ as $\epsilon \to 0$. The Radon transform of this radially symmetric function is independent of $\theta$ and can be calculated by \eqref{radsqu} (as the Radon transform of the indicator function of the outer ball minus the inner ball). It turns out that in the offset variable $\sigma$, $Rf(\sigma,\theta)$ has a double peak shape with two maxima with value $R_{max}$ and one local minimum with value $R_{min}$ in between, and the total variation can be calculated as $2 R_{max} + 2(R_{max} - R_{min})$. The value of the maxima is the length of the cord touching the inner ball and thus given by $2 \sqrt{1-r_2^2}$, while the inner minimum corresponds to the length of the line through the center, i.e., $2 (1- r_2)$. Thus, 
    \begin{equation*}
        \|Rf\|_{\no} = 8 \sqrt{1-r_2^2} - 4 (1- r_2)
        = 8 \sqrt{1-(1-\epsilon)^2}  - 4 \epsilon 
        = O(\sqrt{\epsilon}) \,,
    \end{equation*}
and this value tends to $0$ as $\epsilon \to 0$. Hence, in this nonconvex case, we cannot have a constant $C$ with $|f|_{TV} \leq C \|R f\|_{\no}$.
\end{remark}

{
\begin{remark} \rm
It is interesting to study a special case for the lower bound in Proposition~\ref{proporp}: Consider the indicator function $f$ of $N$ disjoint balls in the plane with small radius $\epsilon$ contained in the unit ball. What would be an arrangement of the centers of these balls such that $\sup_{(\sigma,\theta)} Rf(\sigma,\theta)$ is extremal. The maximal value seems to be the case when the centers are arranged in a line, yielding an $N$-independent constant in Proposition~\ref{proporp}. (This would result in the convenient case of the $\no$-norm being equivalent to the $\TV$-norm). The opposite (and for us inconvenient case) of an arrangement of the centers having minimal $\|Rf\|_\infty$-norm seems to lead to a difficult combinatorial problem, namely essentially to that of arranging balls such that any line in the plane intersects them in a minimal number. An arrangement of the centers along a circle (shaping something like an annulus) seems to be at least an approximate minimal solution, but any solid theoretical statement about this problems is out of reach for us.
\end{remark}
}

\subsection{Steiner Symmetrization in \boldmath $\R^2$}

In this section, we derive TV-RTV norm equivalences based on Steiner symmetrization. For this, let $\Omega \subset \R^2$ be a measurable set, and let $l(0,\theta)$, as above, be a line in $\R^2$ through the origin and with direction $\theta^\bot$. Then by $\Omega^\theta$ we denote the Steiner symmetrization of $\Omega$ around that line; see, e.g., \cite{Henrot}. Recall that this is the uniquely defined set which is symmetric with respect to $l(0,\theta)$ such that any intersection with an line $l' = l(\sigma,\theta^\bot)$ orthogonal to $l(0,\theta)$ is given by an open interval $(-a,a)$ centered at $l(0,\theta)$ and with length equal to the length of the intersection of $\Omega$ and $l'$. The length is exactly $R\ind{\Omega}(\sigma,\theta^\bot)$. 
Thus, 
    \begin{equation*}
        \Omega^\theta = \left\{x = t \theta +\sigma \theta^\bot  \,\big|\, -\frac{1}{2}R\ind{\Omega}(\sigma,\theta^\bot) < t
        <  \frac{1}{2}{R\ind{\Omega}(\sigma,\theta^\bot)} \right\} \,,
    \end{equation*} 
In particular, integrals orthogonal to the symmetry axis are identical for $\Omega$ and $\Omega^\theta$, i.e., we have the identity 
    \begin{equation}\label{eq} 
        R \ind{\Omega^\theta}(\sigma,\theta^\bot) = R\ind{\Omega}(\sigma,\theta^\bot) \,, 
    \end{equation} 
and consequently,
    \begin{equation*}
        |R \ind{\Omega}(\cdot,\theta^\bot) |_{\TV} = |R \ind{\Omega^\theta}(\cdot,\theta^\bot) |_{\TV} \,.
    \end{equation*}
For a nonnegative function $f$, the corresponding Steiner symmetrization $f^\theta$ is defined analogously by symmetrizing its upper level sets, i.e.,  $f^\theta$ is uniquely defined by 
    \begin{equation*}
        \{ f^\theta >t\} = \{ f >t\}^\theta \,.
    \end{equation*}
Hence, similarly to \eqref{eq} we obtain that $Rf^\theta(\sigma,\theta^\bot) =  Rf(\sigma,\theta^\bot)$, which yields the following refinement of the upper bound \eqref{upper}:  
    \begin{equation} 
        \|R f \|_{\no} \leq \sup_{\theta} |f^\theta|_{\TV} \,, \qquad \forall \, f \geq 0 \,.  
    \end{equation}
Note that Steiner symmetrization reduces the perimeter (see, e.g., \cite[Theorem~1.1]{Fusco}), i.e., $ |f^\theta|_{\TV} \leq  |f|_{\TV}$. Using this symmetrization, we are able to establish yet another estimate for the $\no$-norm in the next proposition.  

\begin{proposition}
Let $\theta \in S^1$ and let $\Pi_{\theta^\bot}(\Omega)$ be the essential projection of $\Omega \subset \R^2$ onto the line
$l(0,\theta^\bot)$. Then, there holds 
    \begin{equation} \label{auchwas}
    \begin{split}
        & \frac{1}{\sqrt{2}} \left(  \frac{1}{2} |R \ind{\Omega}(\cdot,\theta^\bot) |_{\TV} + \int_{\Pi_{\theta^\bot}(\Omega)} d\sigma \right) \\
        & \qquad \leq 
        \frac{1}{2}  |\ind{\Omega^\theta}|_{\TV}  \leq   \frac{1}{2} |R \ind{\Omega}(\cdot,\theta^\bot) |_{\TV}  + \int_{\Pi_{\theta^\bot}(\Omega)} \, d\sigma \,. 
    \end{split} 
    \end{equation} 
\end{proposition}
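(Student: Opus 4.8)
The plan is to work with the Steiner symmetrization $\Omega^\theta$, whose boundary is the graph $t = \pm\tfrac12 R\ind{\Omega}(\sigma,\theta^\bot)$ over the essential projection $\Pi_{\theta^\bot}(\Omega)$. Since $\partial\Omega^\theta$ consists of two symmetric Lipschitz graphs (plus possibly vertical segments at the endpoints of the projection interval), the perimeter $|\ind{\Omega^\theta}|_{\TV} = \mathcal{H}^1(\partial\Omega^\theta)$ can be written as an arclength integral. Writing $g(\sigma) := \tfrac12 R\ind{\Omega}(\sigma,\theta^\bot)$, the two graph pieces contribute $\int_{\Pi_{\theta^\bot}(\Omega)} \sqrt{1 + g'(\sigma)^2}\, d\sigma$ each (in the sense of the BV arclength, accounting for jumps of $g$ as vertical segments), so that
\begin{equation*}
    \tfrac12 |\ind{\Omega^\theta}|_{\TV} = \int_{\Pi_{\theta^\bot}(\Omega)} \sqrt{1 + |Dg|^2} \,,
\end{equation*}
interpreted appropriately for the BV function $g$. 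The heart of the argument is then the elementary pointwise inequality
\begin{equation*}
    \max\{1, |a|\} \;\le\; \sqrt{1 + a^2} \;\le\; 1 + |a| \,, \qquad \text{and} \qquad \tfrac{1}{\sqrt{2}}\,(1 + |a|) \le \sqrt{1 + a^2}\,,
\end{equation*}
applied with $a = g'(\sigma)$, and integrated over $\Pi_{\theta^\bot}(\Omega)$.

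Concretely, for the upper bound I would integrate $\sqrt{1+|Dg|^2} \le 1 + |Dg|$ over the projection set; the integral of the constant $1$ gives $\int_{\Pi_{\theta^\bot}(\Omega)} d\sigma$, while the integral of $|Dg|$ is exactly the one-dimensional total variation of $g = \tfrac12 R\ind{\Omega}(\cdot,\theta^\bot)$, i.e. $\tfrac12 |R\ind{\Omega}(\cdot,\theta^\bot)|_{\TV}$. This yields the right-hand inequality in \eqref{auchwas}. For the lower bound I would instead use $\tfrac{1}{\sqrt2}(1 + |Dg|) \le \sqrt{1+|Dg|^2}$ and integrate in the same way, producing the factor $\tfrac{1}{\sqrt2}$ in front of the sum $\tfrac12 |R\ind{\Omega}(\cdot,\theta^\bot)|_{\TV} + \int_{\Pi_{\theta^\bot}(\Omega)} d\sigma$, which is the left-hand inequality. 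Throughout, one uses identity \eqref{eq}, namely $R\ind{\Omega^\theta}(\sigma,\theta^\bot) = R\ind{\Omega}(\sigma,\theta^\bot)$, so that the TV of the cross-section profile of $\Omega^\theta$ agrees with the corresponding quantity for $\Omega$.

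The main technical obstacle is making rigorous the arclength formula for the perimeter of $\Omega^\theta$ when $g$ is merely BV rather than $W^{1,1}$: the graph of a BV function has a well-defined $\mathcal{H}^1$-length only if one counts the jump set as vertical segments, i.e. the correct object is the total variation of the $\R^2$-valued map $\sigma \mapsto (\sigma, g(\sigma))$, which equals $\int \sqrt{1 + |Dg|^2}$ with the singular part of $Dg$ contributing its absolute value. For indicator functions of convex (or more generally nice) sets this is unproblematic since $R\ind{\Omega}(\cdot,\theta^\bot)$ is continuous and piecewise $C^1$, but in the general measurable case one should invoke the standard BV theory (e.g. that the perimeter of a subgraph equals $\int\sqrt{1+|Dg|^2}$, cf.\ \cite[Theorem~3.40]{Ambrosio} and the characterization of sets of finite perimeter) and the rectifiability of $\partial\Omega$ as already used in the proof of Lemma~\ref{lemma6}. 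Once the arclength representation is in place, the remaining steps are just the pointwise inequalities above integrated over $\Pi_{\theta^\bot}(\Omega)$, together with the elementary observation that a possible vertical boundary segment at an endpoint of the projection interval only increases the left-hand side and is dominated on the right, so the inequalities are preserved.
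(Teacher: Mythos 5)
Your proposal is correct and follows essentially the same route as the paper: both represent $\tfrac12|\ind{\Omega^\theta}|_{\TV}$ as the arclength of the graph $\sigma\mapsto\bigl(\sigma,\tfrac12 R\ind{\Omega}(\sigma,\theta^\bot)\bigr)$ and then apply the elementary inequality $\tfrac{1}{\sqrt2}(|a|+|b|)\le\sqrt{a^2+b^2}\le|a|+|b|$, the paper doing so on partition sums and you on the BV area functional $\int\sqrt{1+|Dg|^2}$, which are equivalent formulations. The paper likewise defers the general (non-continuous) case to the reference of Fusco et al., just as you defer it to standard BV subgraph-perimeter theory.
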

\begin{proof}
For illustration, we first prove this result for the case that $R\ind{\Omega}(\sigma,\theta^\bot)$ is continuous and $\partial \Omega^\theta$ is Lipschitz. Furthermore, assume first that $\ol{\Pi_{\theta^\bot}(\Omega)}$ is connected, i.e., an interval. The set $\Omega^\theta$ is given by 
    \begin{equation*}
        \Kl{ x = t\theta +\sigma \theta^\bot \,|\,  \sigma \in \Pi_{\theta^\bot}(\Omega)  \,,  |t|  < \frac{R\ind{\Omega}(\sigma,\theta^\bot)}{2}} \,.
    \end{equation*}
Thus, the boundary of the symmetrized set can be represented as a graph of a continuous function. Its perimeter, i.e., $|\ind{\Omega^\theta}|_{\TV}$, is then twice the arclength of the boundary curve $\sigma\to (\sigma, \frac{R\ind{\Omega}(\sigma,\theta^\bot)}{2})$; see e.g.~\cite[Thm.~2.6.2]{Burago}. In the following, let $\mathcal{P}(\overline{\Pi_{\theta^\bot}(\Omega)})$ denote a partition of $\overline{\Pi_{\theta^\bot}(\Omega)}) \subset\R$, i.e, a vector $(\sigma_i)_{i=1}^N$, $n\in\N$, with $\sigma_i < \sigma_{i+1}$. Then,  
    \begin{align*}  &\frac{1}{2} 
        |\ind{\Omega^\theta}|_{\TV} \\
        & \ =
        \sup_{(\sigma_i)_{i=1}^N \in \mathcal{P}(\overline{\Pi_{\theta^\bot}(\Omega)})} 
        \sum_{i=1}^N \sqrt{\left|\frac{1}{2} (R\ind{\Omega}(\sigma_i,\theta^\bot) - \frac{1}{2}R\ind{\Omega}(\sigma_{i-1},\theta^\bot) \right|^2 + |\sigma_i-\sigma_{i-1}|^2} \,. 
    \end{align*}
Now let 
    \begin{equation*}
         \Var R\ind{\Omega}(\cdot,\theta^\bot) :=  
         \sup_{(\sigma_i)_{i=1}^N \in \mathcal{P}(\overline{\Pi_{\theta^\bot}(\Omega)})}
         \sum_{i=1}^N 
        \left|(R\ind{\Omega}(\sigma_i,\theta^\bot) - R\ind{\Omega}(\sigma_{i-1},\theta^\bot) \right|  \,.
    \end{equation*}
Using $\frac{1}{\sqrt{2}} (|a| + |b|) \leq \sqrt{a^2+b^2} \leq |a| + |b| $, we thus find that 
    \begin{align*} 
        &\frac{1}{\sqrt{2}} \left( \frac{1}{2} 
        \Var R\ind{\Omega}(\cdot,\theta^\bot)  +  
       \sup_{(\sigma_i)_{i=1}^N \in \mathcal{P}(\overline{\Pi_{\theta^\bot}(\Omega)})}
         \sum_{i=1}^N 
        |\sigma_i-\sigma_{i-1}|  \right) \leq  \frac{1}{2} |\ind{\Omega^\theta}|_{\TV} \\
        & \leq 
        \frac{1}{2}  \Var R\ind{\Omega}(\cdot,\theta^\bot)  +  
        \sup_{(\sigma_i)_{i=1}^N \in \mathcal{P}(\overline{\Pi_{\theta^\bot}(\Omega)})}
        \sum_{i=1}^N |\sigma_i-\sigma_{i-1}| \,.
    \end{align*}
Since we assumed that $R\ind{\Omega}$ is continuous, it follows that 
	\begin{equation*}
		\Var R\ind{\Omega}(\cdot,\theta^\bot) = |R\ind{\Omega}(\cdot,\theta^\bot)|_{\TV} \,,
	\end{equation*}
which together with 
	\begin{equation*}
        \sup_{(\sigma_i)_{i=1}^N \in \mathcal{P}(\overline{\Pi_{\theta^\bot}(\Omega)})}
        \sum_{i=1}^N  
        |\sigma_i-\sigma_{i-1}| = | \overline{\Pi_{\theta^\bot}(\Omega) }|
		= \int_{\Pi_{\theta^\bot}(\Omega)|} d\sigma 
	\end{equation*}
yields the assertion. In case that $\ol{\Pi_{\theta^\bot}(\Omega)}$ has multiple components separated by intervals where $R\chi_\Omega$ vanishes, we may apply the previous argument component-wise and get the analogous result, since $|\cdot|_{\TV}$ is additive over these components.

For the general case, see \cite[p.~162, line~9]{Fusco}, which implies
    \begin{equation*}
        |\ind{\Omega^\theta}|_{\TV}  \leq  |  R\ind{\Omega}(\cdot,\theta^\bot) |_{\TV} + 2 \int_{\Pi_{\theta}(\Omega)} \, d\sigma\,.
    \end{equation*}
The lower bound can be proven using similarly arguments as in \cite{Fusco}. 
\end{proof}

The above proposition sheds light on the meaning of the $\no$-norm: It is equivalent to  the perimeter of a symmetrized set up to a term related to the width of the set. By the isoperimetric inequality, a lower bound for $|\ind{\Omega^\theta}|_{\TV}$ is given by the TV-norm of the spherical symmetric rearrangement of $\Omega$. Thus, we have the following lower bound: 

\begin{theorem}
Let $\Omega$ be a measurable set in $\R^2$, and let $\Omega^*$ be its spherical symmetric rearrangement. Then there holds
    \begin{equation}
        |\ind{\Omega^*}|_{\TV} \leq  \frac{1}{2} \|R \ind\Omega\|_{\no} + \inf_\theta 
        \int_{\Pi_{\theta} (\Omega)} \, d\sigma \,.
    \end{equation}
Additionally, let $\partial \Omega$ be Lipschitz continuous.  
Then we have the lower bound 
    \begin{equation}\label{ssyy}  
        |\ind{\Omega^*}|_{\TV}- \frac{1}{\pi} 
        |\ind{\Omega}|_{\TV} \leq  \frac{1}{2} \|R \ind\Omega\|_{\no} \,.
    \end{equation}
\end{theorem}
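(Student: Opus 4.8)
The plan is to derive both estimates from the upper bound of the preceding proposition combined with the isoperimetric inequality, and then to optimise over the symmetrisation direction. Fix $\theta\in S^1$. Steiner symmetrisation preserves Lebesgue measure, so $\Omega^\theta$, $\Omega$ and the spherical symmetrisation $\Omega^*$ all have the same measure; since $\Omega^*$ is a disc, the isoperimetric inequality gives $\abs{\ind{\Omega^*}}_{\TV}\le\abs{\ind{\Omega^\theta}}_{\TV}$. The upper bound of the preceding proposition --- which holds for arbitrary measurable $\Omega$ by the BV fine-structure results for Steiner symmetrisation, cf.~\cite{Fusco} --- reads $\tfrac12\abs{\ind{\Omega^\theta}}_{\TV}\le\tfrac12\abs{R\ind{\Omega}(\cdot,\theta^\bot)}_{\TV}+\int_{\Pi_{\theta^\bot}(\Omega)}\,d\sigma$. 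Bounding the sinogram term by its supremum over directions, $\abs{R\ind{\Omega}(\cdot,\theta^\bot)}_{\TV}\le\norm{R\ind{\Omega}}_{\no}$, removes its $\theta$-dependence; taking the infimum over the remaining projection term (the relabelling $\theta^\bot\leftrightarrow\theta$ being harmless, as the infimum ranges over all of $S^1$) then yields the first estimate.

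For the second estimate it remains to control $\inf_\theta\int_{\Pi_\theta(\Omega)}\,d\sigma$ by the perimeter of $\Omega$, which I would do by comparing this infimum to its average over $S^1$. For a.e.\ direction $\theta$ and a.e.\ $\sigma\in\Pi_\theta(\Omega)$ the line $l(\sigma,\theta^\bot)$ meets $\Omega$ in a set of positive length, hence, $\Omega$ being bounded, crosses $\partial\Omega$ in at least two points, so that $2\abs{\Pi_\theta(\Omega)}\le\int_\R\#\kl{\partial\Omega\cap l(\sigma,\theta^\bot)}\,d\sigma$. Integrating in $\theta$ and applying the Cauchy--Crofton formula to the rectifiable curve $\partial\Omega$ --- in the normalisation behind the proof of Lemma~\ref{lemma6} (\cite[Theorem~3.2.15]{Federer}), namely $\mathcal{H}^1(\partial\Omega)=\tfrac14\int_{S^1}\int_\R\#\kl{\partial\Omega\cap l(\sigma,\theta^\bot)}\,d\sigma\,d\theta$ --- together with $\abs{\ind{\Omega}}_{\TV}=\mathcal{H}^1(\partial\Omega)$, valid since $\partial\Omega$ is Lipschitz (\cite[Proposition~3.62]{Ambrosio}), gives $\int_{S^1}\abs{\Pi_\theta(\Omega)}\,d\theta\le 2\abs{\ind{\Omega}}_{\TV}$, whence $\inf_\theta\abs{\Pi_\theta(\Omega)}\le\tfrac1{2\pi}\int_{S^1}\abs{\Pi_\theta(\Omega)}\,d\theta\le\tfrac1\pi\abs{\ind{\Omega}}_{\TV}$. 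Substituting into the first estimate produces the second.

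The one genuinely delicate point is the general form of the preceding proposition, namely the upper bound on $\abs{\ind{\Omega^\theta}}_{\TV}$ without the continuity of $\sigma\mapsto R\ind{\Omega}(\sigma,\theta^\bot)$ or the Lipschitz regularity of $\partial\Omega^\theta$ assumed in its illustrative proof; this is exactly where the symmetrisation results of \cite{Fusco} enter, and the step I expect to require the most care. Everything else --- the isoperimetric inequality, the elementary ``at least two crossings'' observation, and the Cauchy--Crofton formula --- is classical, and the two displayed bounds of the theorem then follow by the bookkeeping above, with the same normalising constants as in the preceding proposition.
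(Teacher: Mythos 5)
Your proposal is correct and follows essentially the same route as the paper: the first bound comes from the Steiner-symmetrization estimate \eqref{auchwas} combined with the isoperimetric inequality $|\ind{\Omega^*}|_{\TV}\le|\ind{\Omega^\theta}|_{\TV}$ and an infimum over directions, and the second from bounding $\inf_\theta\int_{\Pi_\theta(\Omega)}\,d\sigma$ by $\frac{1}{\pi}|\ind{\Omega}|_{\TV}$ via the Cauchy--Crofton formula together with the observation that a line meeting $\Omega$ in positive length must cross $\partial\Omega$ at least twice. The paper's proof is the same in substance, with the same normalizations and constants, so there is nothing further to compare.
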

\begin{proof}
The first estimate follows from \eqref{auchwas} using $|\ind{\Omega^*}|_{\TV}\leq |\ind{\Omega^\theta}|_{\TV}$ and then taking the infimum over $\theta$. The second bound follows from the Crofton formula \cite[Theorem 3.2.26, 2.10.15]{Federer}. Let $n(\partial \Omega,l(\sigma,\theta))$ be the number of intersections of the line $l(\sigma,\theta)$ with $\partial \Omega$. The set of $\sigma$ where this number is nonzero contains the set of $\sigma$ where  $R_\Omega(\sigma,\theta)>0$, and on this set, $n(\partial \Omega,l(\sigma,\theta))$ must be larger than or equal to  $2$, i.e.,
    \begin{align*} 
        |\ind{\Omega}|_{\TV} &= 
        \mathcal{H}^1(\partial \Omega) =
        \frac{1}{4} \int_{S^1} \int_\R n(\partial \Omega,l(\sigma,\theta) \, d\sigma \, d\theta \geq 
        \frac{1}{4}  \int_{S^1} 
        \int_{R(\sigma,\theta)>0} 2 \, d\sigma 
        \\ &
        \geq \frac{2}{4} 2 \pi \inf_\theta   \int_{\Pi_{\theta}(\Omega) } \, d\sigma   
        = \pi \inf_\theta   \int_{\Pi_{\theta}(\Omega) } \, d\sigma \,. 
    \end{align*}
\end{proof}

The estimate \eqref{ssyy} is, of course, only relevant when the left-hand side is positive, that is, for sets $\Omega$ not too far off from a spherical shape. Note that this estimate does not require convexity. Finally, we show a lower bound for nonnegative functions with convex upper level sets and two axis of symmetry. 

\begin{theorem}\label{th4}
Let $f : \R^2 \to \R$ be nonnegative with convex upper level sets and having two (orthogonal) axes of symmetry. Then there is a constant $C$ such that 
    \begin{equation*}
        C |f|_{\TV}  \leq \|R f\|_{\no} \,.
    \end{equation*}
\end{theorem}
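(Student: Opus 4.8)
## Proof Plan for Theorem~\ref{th4}

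\noindent\textbf{Overall strategy.} The plan is to reduce to the case of indicator functions via the coarea formula, then exploit the two axes of symmetry to control the ``width'' term that appeared in the previous proposition. Recall from \eqref{auchwas} that for a measurable set $\Omega$ and any $\theta$,
\begin{equation*}
    \tfrac12 |\ind{\Omega^\theta}|_{\TV} \leq \tfrac12 |R\ind{\Omega}(\cdot,\theta^\bot)|_{\TV} + \int_{\Pi_{\theta^\bot}(\Omega)} d\sigma \,,
\end{equation*}
and conversely $|\ind{\Omega^\theta}|_{\TV} \geq $ a constant times the same right-hand side. Since $f$ has convex upper level sets $\{f>t\} = K_t$, each $K_t$ is convex, and the two orthogonal axes of symmetry of $f$ are inherited by each $K_t$. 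I would choose coordinates so that the two symmetry axes are the coordinate axes; then for $\theta$ equal to one coordinate direction, the Steiner symmetrization $K_t^\theta$ coincides with $K_t$ itself (a convex set symmetric about a line is its own Steiner symmetrization about that line).

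\noindent\textbf{Key steps.} First I would establish the pointwise-in-$t$ inequality: for each level $t$, using \eqref{auchwas} with $\Omega = K_t$ and $\theta = e_1$ (so $K_t^\theta = K_t$), we get
\begin{equation*}
    |\ind{K_t}|_{\TV} \leq |R\ind{K_t}(\cdot,e_1^\bot)|_{\TV} + 2\int_{\Pi_{e_1^\bot}(K_t)} d\sigma = |R\ind{K_t}(\cdot,e_2)|_{\TV} + 2\, w(t)\,,
\end{equation*}
where $w(t) := |\Pi_{e_2}(K_t)|$ is the width of $K_t$ in the $e_2$-direction. The crucial observation is that because $K_t$ has the \emph{second} axis of symmetry as well, this width $w(t)$ is itself controlled by the Radon transform in the \emph{other} direction: the line through the center of symmetry orthogonal to $e_2$ has length exactly $w(t)$ inside $K_t$, so $w(t) \leq \sup_\sigma R\ind{K_t}(\sigma, e_2^\bot) = \tfrac12 |R\ind{K_t}(\cdot,e_2^\bot)|_{\TV}$ by Lemma~\ref{help1} (applied to the one-dimensional, convex-level-set function $\sigma\mapsto R\ind{K_t}(\sigma,e_2^\bot)$, which is even and unimodal). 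Second, I would integrate over $t$: by the coarea formula $|f|_{\TV} = \int |\ind{K_t}|_{\TV}\, dt$, and since $R$ is linear, $Rf(\sigma,\theta) = \int R\ind{K_t}(\sigma,\theta)\, dt$; combined with the fact that $\sigma\mapsto R\ind{K_t}(\sigma,\theta)$ has convex (hence interval) superlevel sets for each $t$, one checks $|Rf(\cdot,\theta)|_{\TV} = \int |R\ind{K_t}(\cdot,\theta)|_{\TV}\, dt$ as well (this monotone-layering identity for the 1D TV holds because the superlevel intervals are nested in $t$). Putting these together,
\begin{equation*}
    |f|_{\TV} = \int |\ind{K_t}|_{\TV}\, dt \leq \int \big( |R\ind{K_t}(\cdot,e_2)|_{\TV} + |R\ind{K_t}(\cdot,e_1)|_{\TV} \big)\, dt = |Rf(\cdot,e_2)|_{\TV} + |Rf(\cdot,e_1)|_{\TV} \leq 2\|Rf\|_{\no}\,,
\end{equation*}
giving the theorem with $C = \tfrac12$.

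\noindent\textbf{Main obstacle.} The delicate point is the passage from the layered (per-level) TV estimates back to the TV of $f$ and of $Rf$, i.e.\ justifying $|Rf(\cdot,\theta)|_{\TV} = \int |R\ind{K_t}(\cdot,\theta)|_{\TV}\, dt$ rigorously. For $f$ itself this is exactly the coarea formula \cite[Theorem 3.40]{Ambrosio}, but for the one-dimensional profiles $\sigma\mapsto Rf(\sigma,\theta)$ one needs that the superlevel sets $\{\sigma : Rf(\sigma,\theta) > s\}$ decompose compatibly with the layers $R\ind{K_t}$; this is where convexity of the upper level sets of $f$ is essential, since it guarantees $R\ind{K_t}(\cdot,\theta)$ is unimodal with a single superlevel interval that shrinks monotonically in $t$, so the one-dimensional coarea identity applies cleanly. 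A secondary technical point is that, without a Lipschitz-boundary hypothesis on the level sets, one should work with the general form of \eqref{auchwas} and invoke an approximation argument as in Lemma~\ref{lemma2}, or appeal directly to \cite[p.~162]{Fusco}; I would handle the smooth case first and then mollify. I would also need to note that the symmetry axes of $f$ passing through a common center (the center of symmetry) is what makes both width bounds available simultaneously; if the two axes did not intersect inside $\R^2$ this would fail, but two orthogonal lines always do.
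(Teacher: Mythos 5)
Your proposal is correct and follows essentially the same route as the paper's proof: reduce to the convex, doubly symmetric level sets via the coarea formula, apply \eqref{auchwas} using that each level set equals its own Steiner symmetrization, dominate the projection/width term by the central chord in the orthogonal direction (which is where the second symmetry axis and convexity enter, made rigorous in the paper by an $\epsilon$-level-set rectangle argument), and integrate back up with the layer-cake representation and Lemma~\ref{help1}, arriving at the same constant $C=\tfrac12$. The only cosmetic issue is a direction/normal mix-up in the phrase ``the line \ldots orthogonal to $e_2$'' (you mean the central chord in the $e_2$-direction, i.e.\ $R\ind{K_t}(0,e_2^\bot)$), but the displayed formulas are consistent and the argument is sound.
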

\begin{proof}
Without loss of generality, we assume the $x$ and $y$ axis to be axes of symmetry, i.e., $f(x,y) = f(-x,y)$ and $f(x,y) = f(x,-y)$. Let $\theta = (0,1)$ be  the $y$-direction and $\theta^\bot$ the $x$-direction. We have $ Rf(y,\theta) = \int_{\R} f(x,y) dx $ and $Rf(x,\theta^\bot) = \int_{\R}  f(x,y)\,dy$. Note that $Rf(y,\theta)$ and $Rf(x,\theta^\bot)$ as functions of $y$ and $x$, respectively, have (by convexity and symmetry) convex centered level sets, thus intervals. In particular, $0$ is contained in all nonempty level sets, thus 
    \begin{align}\label{ti} 
    \begin{split}
        \sup_x Rf(x,\theta^\bot) &= 
        Rf(0,\theta^\bot) = 
        \int f(0,y) \, dy \,, \\ 
        \sup_y Rf(y,\theta) &= Rf(0,\theta) = \int f(x,0) \, dx \,. 
    \end{split}
    \end{align}
For each upper level set 
$\Omega_\alpha = \{f> \alpha\}$, we obtain from \eqref{auchwas} that 
     \begin{equation*}
        |\ind{\Omega_\alpha}|_{\TV} \leq 
        |R\ind{\Omega_\alpha}(\cdot,\theta^\bot)|_{\TV} 
        + 2 |\Pi_{\theta^\bot} \Omega_\alpha| \,, 
    \end{equation*}
and the analogous result for $|R\ind{\Omega_\alpha}(\cdot,\theta)|_{\TV}$. By convexity and symmetry, \eqref{p1} and \eqref{ti}, it follows that 
    \begin{equation*}
        |R\ind{\Omega_\alpha}(\cdot,\theta^\bot)|_{\TV} 
        = 2\int \ind{\Omega_\alpha}(0,y) \, dy \,,
        \qquad |R\ind{\Omega_\alpha}(\cdot,\theta)|_{\TV} 
        = 2\int \ind{\Omega_\alpha}(x,0) \, dx \,.
    \end{equation*}
We now show that
    \begin{equation*}
        |\Pi_{\theta} \Omega_\alpha|  =
        \left|\left\{y|R\ind{\Omega_\alpha}(y,\theta) >0\right\}\right| \leq \int \ind{\Omega_\alpha}(0,y) \, dy \,. 
    \end{equation*}
Define $I_y := \{y|R\ind{\Omega_\alpha}(y,\theta) >0\}$ and $I_{y,\epsilon} := \{y|R\ind{\Omega_\alpha}(y,\theta) >\epsilon\}$. By convexity and symmetry, $I_y$ and $I_{y,\epsilon} $ are intervals centered at $0$. Clearly, $I_y = \bigcup_{\epsilon>0} I_{y,\epsilon}$, and since the $I_{y,\epsilon}$ are ordered, $|I_y| = \lim_{\epsilon \to 0}  |I_{y,\epsilon}|$. Let $\pm a_{\epsilon}$ be the boundary points such that $I_{y,\epsilon} = [a_{\epsilon},-a_{\epsilon}]$ (without loss of generality we assume this a closed interval; if not we reduce $a_{\epsilon}$ by a small $\delta$). Since $R\ind{\Omega_\alpha}(\pm a_{\epsilon},\theta)>\epsilon$, the lines $\{(x,\pm a_\epsilon)\}$, $|x| \leq \frac{\epsilon}{2}$ must be contained in $\Omega_\alpha$, and by convexity, the rectangle $S:= \{(x,y)| |x| \leq \frac{\epsilon}{2}, |y| \leq a_\epsilon\}$ is contained in $\Omega_\alpha$. Hence, 
    \begin{equation*}
        \int \ind{\Omega_\alpha}(0,y) \, dy  
        \geq \int \ind{S}(0,y) dy = 2|a_\epsilon| 
        = |I_{y,\epsilon}|\,.
    \end{equation*}
Thus, taking $\epsilon \to 0$ gives
    \begin{equation*}
        |\Pi_{\theta} \Omega_\alpha| 
        \leq \int \ind{\Omega_\alpha}(0,y) \,dy\,,
    \end{equation*}
which yields 
    \begin{equation*}
        |\ind{\Omega_\alpha}|_{\TV}
        \leq  2\int \ind{\Omega_\alpha}(x,0) dx
        + 2 \int \ind{\Omega_\alpha}(0,y) dy \,. 
    \end{equation*}
Integrating over $\alpha$, using the coarea formula and Fubini's theorem yields 
    \begin{align*}
        |f|_{\TV} 
        &= \int_{0}^\infty 
        |\ind{\Omega_\alpha}|_{\TV} \, d \alpha \\ 
        &\leq  2\int_{\R} \int_{0}^\infty   \ind{\Omega_\alpha}(x,0) \, d\alpha \, dx +
        2 \int_{\R}  \int_{0}^\infty  \ind{\Omega_\alpha}(0,y)  \, d\alpha \, dy \,,
    \end{align*}
and by the so-called layer-cake representation, it follows that 
    \begin{align*} 
        \int_{\R} \int_{0}^\infty \ind{\Omega_\alpha}(0,y) \, d\alpha  = \int_{\R} f(0,y) \, dy \,, \quad 
        \int_{\R} \int_{0}^\infty  \ind{\Omega_\alpha}(x,0) \, d\alpha  \, dx = 
        \int_{\R} f(x,0) \, dx \,. 
    \end{align*}
Combining this with \eqref{ti} and \eqref{p1} now yields the assertion. 
\end{proof}

\subsection{Conditional stability for smooth functions in \boldmath $\R^2$}

This section is devoted to lower bounds for $\|Rf\|_{\no}$ with respect to the $\TV$-norm, again in two dimensions, and in case that $f$ has additional smoothness and is nonnegative. The results are achieved by Gargliardo-Nirenberg estimates. In the following, we denote by $W^{s,1}(\R^n)$, for $s \in \R^+$, the Sobolev spaces with differentiation order $s$ based on $L^1$. For integer $s$, this is just the sum of the $L^1$-norms of all derivatives up to order $s$. For noninteger $s = \floor{s}  + \beta$, where $\floor{s}$ is an integer and $\beta \in (0,1)$, the norm of $f$ is defined as the $W^{\floor{s},1}$-norm plus the seminorm, cf.~\cite{Adams,Brezis},
    \begin{equation*}
        \int_{\R^n} \int_{\R^n} 
        \frac{|f(x)-f(y)|}{|x-y|^{n + \beta}} \, dx \, dy\,.
    \end{equation*}
For two-dimensional functions $f$, let $f_{\sigma,\theta}(t):=  f(\sigma \theta + t \theta^\bot)$ denote the restriction of $f$ to the line $l(\sigma,\theta)$. From \cite[Proposition 2.6]{Brezis}, it follows that 
    \begin{equation*}
        \int_{\theta \in S^1} 
        \int_{\R} \| f_{\sigma,\theta}\|_{W^{s,1}(\R)} \,
        d\sigma \, d\theta 
        \leq \|f\|_{W^{s,1}(\R^2)}\,. 
    \end{equation*}
Note that the above inequality can be sharpened into equality up to a constant. Furthermore, it may also be extended to \TV-functions in the form, cf.~\cite{Kindermann},
    \begin{equation}\label{K1}
        |f|_{\TV} = \frac{1}{4} \int_{\theta \in S^1} \int_{\R} |f_{\sigma,\theta}|_{\TV}  \, d\sigma 
        d\theta\,.
    \end{equation}
This then allows us to derive the following result: 

\begin{theorem}
Let $f\geq 0 $ be a nonnegative function satisfying the support condition \eqref{supp} and assume that there exists a constant $M > 0$ such that 
    \begin{equation*}
        \|f\|_{W^{s,1}(\R^2)} \leq M \,,
        \qquad \text{ for some }
        s \in (1,\infty) \,.
    \end{equation*}
Then there exist a constant $C > 0$ such that
    \begin{equation*}
        M^{-\frac{1}{s-1}} |f|_{\TV}^\frac{s}{s-1} 
        \leq C \|Rf\|_{\no} \,.
    \end{equation*}
\end{theorem}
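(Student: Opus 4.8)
The plan is to slice $f$ along lines, apply a one-dimensional Gagliardo--Nirenberg inequality on each slice, and reassemble by means of the slicing identity \eqref{K1}.

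\emph{Setup.} For $\theta\in S^1$ and $\sigma\in\R$ write $f_{\sigma,\theta}(t):=f(\sigma\theta+t\theta^\bot)$ for the restriction of $f$ to the line $l(\sigma,\theta)$; by \eqref{supp} we have $f_{\sigma,\theta}\equiv 0$ whenever $|\sigma|\ge 1$, so every $\sigma$-integral below runs effectively over $[-1,1]$ only. Since $f\ge 0$, for a.e.\ $(\sigma,\theta)$
\[
\|f_{\sigma,\theta}\|_{L^1(\R)}=\int_\R f(\sigma\theta+t\theta^\bot)\,dt=Rf(\sigma,\theta)\le\|Rf\|_{L^\infty(\R\times S^1)}\le C\,\|Rf\|_{\no},
\]
the last step being \eqref{embed}. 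Thus the $L^1$-norm of every slice of $f$ is controlled by $\|Rf\|_{\no}$.

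\emph{Interpolation and assembly.} The local ingredient is the one-dimensional Gagliardo--Nirenberg inequality: for $s>1$ there is a constant $C_s$, independent of the line, with
\[
|g|_{\TV}\le C_s\,\|g\|_{W^{s,1}(\R)}^{1/s}\,\|g\|_{L^1(\R)}^{1-1/s}\qquad\text{for all compactly supported }g\in W^{s,1}(\R),
\]
which is exactly the scaling-admissible interpolation of derivative order $1$ between $W^{s,1}(\R)$ (order $s$) and $L^1(\R)$ (order $0$), hit at the fraction $\theta=1/s$. Apply this with $g=f_{\sigma,\theta}$; pull the factor $\|f_{\sigma,\theta}\|_{L^1(\R)}^{1-1/s}$ out of the double integral using the Setup bound; then use \eqref{K1}, Jensen's inequality for the concave map $x\mapsto x^{1/s}$ on the finite-measure space $S^1\times[-1,1]$, together with the Brezis slicing bound $\int_{S^1}\int_\R\|f_{\sigma,\theta}\|_{W^{s,1}(\R)}\,d\sigma\,d\theta\le\|f\|_{W^{s,1}(\R^2)}\le M$. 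This yields
\[
|f|_{\TV}=\frac1{2\pi}\int_{S^1}\int_\R|f_{\sigma,\theta}|_{\TV}\,d\sigma\,d\theta\le C'\,M^{1/s}\,\|Rf\|_{\no}^{\,1-1/s}.
\]
Raising to the power $s/(s-1)$ and rearranging then gives the claimed lower bound for $\|Rf\|_{\no}$.

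\emph{Main obstacle.} Every step except the Gagliardo--Nirenberg inequality is a direct assembly of results already in the paper (\eqref{K1}, the Brezis slicing bound, the $\TV$--$L^\infty$ embedding \eqref{embed}, and Jensen). The genuine work is therefore that inequality: one must check that the interpolation exponent $\theta=1/s$ with target space $L^1$ and derivative order $1$ is admissible --- it is, since $\theta=1/s<1$, so the exceptional $L^1$-endpoint $\theta=1$ is avoided --- and that $C_s$ can be chosen uniformly over all lines, which follows from the translation and rotation invariance of the $W^{s,1}(\R)$- and $\TV$-norms. For non-integer $s$ one should phrase it through the Gagliardo seminorm defining $W^{s,1}(\R)$ (equivalently the Besov space $B^{s}_{1,1}(\R)$), using $W^{s,1}(\R)\hookrightarrow W^{1,1}(\R)$ so that $|g|_{\TV}=\|g'\|_{L^1(\R)}$ there; for integer $s\ge 2$ it is a Kolmogorov--Landau-type inequality on the line.
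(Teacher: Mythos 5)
Your argument is essentially the paper's own proof: slice along lines, apply the one-dimensional Gagliardo--Nirenberg inequality of Brezis--Mironescu with interpolation exponent $1/s$, control the $L^1$-norm of each slice by $Rf(\sigma,\theta)\le\|Rf\|_{L^\infty}\le C\|Rf\|_{\no}$ via \eqref{embed}, and reassemble with \eqref{K1} and H\"older/Jensen plus the Brezis slicing bound, arriving at the same intermediate estimate $|f|_{\TV}\le C\,M^{1/s}\|Rf\|_{\no}^{1-1/s}$. This is correct and matches the paper's route step for step (your added remarks on admissibility of the exponent and uniformity of the constant over lines are fine but not a different method).
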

\begin{proof}
For a one-dimensional function $g(t)$ and any $s \in (1,\infty)$, the following Gagliardo-Nirenberg estimate holds: \cite[Theorem 1]{Brezis} for $s \in (1,\infty)$:
    \begin{equation*}
         \|g\|_{W^{1,1}(\R)} \leq  \|g\|_{L^1(\R)}^\rho\|g\|_{W^{s,1}(\R)}^{1-\rho} \,, \qquad  \rho =  1- \frac{1}{s} \in (0,1) \,.
    \end{equation*}
Applying this to the choice $g(t):= f_{\sigma,\theta}(t)$ for fixed $\sigma$ and $\theta$, we obtain
    \begin{equation*}
        \int_\R \left|\frac{d}{dt}  f_{\sigma,\theta}(t) \right| \, dt  
        \leq \left(\int_\R |f_{\sigma,\theta}|  \, dt \right)^\rho
        \| f_{\sigma,\theta}\|_{W^{s,1}(\R)}^{1-\rho} \,.
    \end{equation*}
Since $f$ is assumed to be nonnegative, it follows that $\int |f_{\sigma,\theta}| \, dt = \int f_{\sigma,\theta} \, dt = 
Rf(\sigma,\theta)$. Hence, integrating over $\sigma$ and $\theta$ and using \eqref{K1} and \eqref{embed} yields 
    \begin{align*} 
        4|f|_{\TV} &\overset{\eqref{K1}}{=} 
        \int_{\theta \in S^1} \int_{\R} 
        \int_{\R}  \left|\frac{d}{dt}  f_{\sigma,\theta}(t) \right| \, dt \, d\sigma \,d\theta \\ 
        &\leq 
        \int_{\theta \in S^1} 
        \int_{\R}|Rf(\sigma,\theta)|^{\rho}  \| f_{\sigma,\theta}\|_{W^{s,1}(\R)}^{1-\rho} \, d\sigma \, d\theta \\ 
        & \leq 
        (\sup_{\sigma,\theta} |Rf(\sigma,\theta)| )^\rho 
        \int_{\theta \in S^1} 
        \int_{\R} \| f_{\sigma,\theta}\|_{W^{s,1}(\R)}^{1-\rho}  \,d\sigma \,d\theta \\
        & \overset{\eqref{embed}}{\leq} C \|R f\|_{\no}^\rho 
        \left(\int_{\theta \in S^1} 
        \int_{\R} \| f_{\sigma,\theta}\|_{W^{s,1}(\R)} \, d\sigma \, d\theta \right)^{1-\rho} \,,
	\end{align*}
where we have used the H\"older inequality and the fact that $f_{\sigma,\theta}$ has compact support in $[-1,1] \times [0, 2\pi]$, which concludes the proof. 
\end{proof}

Thus, if $f$ in this theorem is of high smoothness $s \gg 1$,  
then the exponent tends to $1$, and thus the norm equivalence between $\TV$ and $\no$ holds asymptotically as the smoothness tends to $\infty$ for all such nonnegative functions.

\section{Filtering and Regularization}\label{sec:4}

As stated in the introduction, one motivation for the use of different sinogram norms $\| \cdot \|_S$ is to obtain novel nonlinear filtered backprojection formulas for the approximate inversion of the Radon transform. 

For this, recall the Tikhonov functional \eqref{Radon_Tikhonov}. As noted in the introduction, if the penalty $\|f\|$ is equivalent to one of the sinogram norms $\|Rf \|_S$ defined above, it is reasonable to replace $\|f\|$ by  $\|Rf \|_S$. Instead of \eqref{Radon_Tikhonov}, we then minimize 
    \begin{equation}\label{eqin} 
        T_S(f):= \frac{1}{2} \|R f - y\|^2 + \alpha \|R f\|_S \,,
    \end{equation}
for $f$, and thus, a regularized solution is given by $f_\alpha = \argmin_f T_S(f)$. However, observe that minimizing the Tikhonov functional over $f$ is (roughly) equivalent to minimizing over $Rf$. More precisely, upon setting $g = Rf$, we can rewrite this problem as the constrained minimization problem
    \begin{equation}\label{kabel}
        \min_{g} 
        \|g - y\|^2 + \alpha \|g\|_S \,, \qquad 
        \text{ subject to } g = Rf \,.
    \end{equation}
Without the constraints $g = Rf$, we recognize that a minimizer $g$ is obtained by applying the well-known proximal operator 
    \begin{equation}\label{babel}
        \prox_{\alpha \|\cdot\|_S}(y) : =  
        \argmin \Kl{\frac{1}{2} \| z-y\|^2  + \alpha \|z\|_S} \,,
    \end{equation}
which, at least formally, leads to the following solution formula for $f_\alpha$:
    \begin{equation}\label{fffp} 
        R f_\alpha = \prox_{\alpha \|\cdot\|_S}(y) \,. 
    \end{equation}
The above derivation is informal, since a solution of \eqref{fffp} does not need to exist, since the proximal map is not necessarily in the range of $R$; see also Remark~\ref{remark_range}. A related issue is that we cannot guarantee in all cases that \eqref{eqin} has a minimizer, since
sets of functions $f$ with bounded $\|R f \|_S$ -norm are not necessarily bounded 
in $L^2$ (but only in the weaker space 
$H^{-\epsilon}$; cf.~\eqref{comex}), and thus,  
a classic existence proof using weak compactness 
is not feasible. 
However, in this section, we do not attempt to achieve a theoretically based algorithm, but rather investigate practical consequences of our stability estimates. Of course, from a 
practical point of view, in 
a discretized setting of numerical computation, the existence of a minimizer is clear.

Next, note that in \eqref{fffp} the proximal operator is independent of the Radon transform, and the problem is now separated into an initial nonlinear filtering of the data, followed by an inversion of the filtered data. Based on these ideas, we proposed a novel nonlinear backprojection method by applying simple inversion formulas for the Radon transform to the filtered data $\prox_{\alpha \|\cdot\|_S}(y)$, i.e., 
    \begin{equation}\label{appfil} 
        \text{`` $f \sim R^{-1}\prox_{\alpha \|\cdot\|_S}(y) $ ''} \,,
    \end{equation}
or, since the inverse may not be well-defined, by using an approximate inverse:
    \begin{equation}\label{bappfil} 
        f\sim R_\alpha^\# \prox_{\alpha \|\cdot\|_S}(y) \,,
    \end{equation}
where $R_\alpha^\# $ is a regularized approximation of $R^{-1}$. In our numerical experiments, we use simple filtered backprojection for $R_{\alpha}^{\#}$ via Matlab's ``iradon'' command:
    \begin{equation}\label{irbb}
        f_\alpha = \text{iradon}(\prox_{\alpha \|\cdot\|_S}(y)) \,.
    \end{equation}
However, note that the nonlinear sinogram filtering can, of course, in principle be combined with any other regularization method in place of \text{iradon}. For the norm $\norm{\cdot}_S$ in the proximal operator $\prox_{\alpha \|\cdot\|_S}$, we either use the Radon-TV norm $\|\cdot\|_{\no}$ or the $p$-th power of the Sharafutdinov norm, $\norm{\cdot}_\RsptSnR^p$. 

\begin{remark} \rm
Even though a full analysis of the method given in \eqref{bappfil} is outside the scope of the article, we now briefly sketch how a possible convergence analysis of it could proceed. Based on standard theory, the computed $f$ in \eqref{bappfil} would converge to the exact solution, if $\prox_{\alpha \|\cdot\|_S}(y)$ is treated as perturbed data and the regularization parameter in $R_\alpha^\#$ is chosen according to $\delta$, where here $\delta$ is the bound $\| \prox_{\alpha \|\cdot\|_S}(y) - R f^\dagger\| \leq C \delta$. Similarly as for Tikhonov regularization, we may then estimate \eqref{babel} to verify that this condition can be satisfied if the corresponding regularization parameter $\alpha$ is also chosen appropriately. 
\end{remark}

\begin{remark}\label{remark_range} \rm 
Since the proposed method \eqref{irbb}, as well as \eqref{bappfil}, is only an approximation of the motivating Tikhonov regularization \eqref{eqin}, it may be of interest to illustrate ways to modify the algorithm such that it becomes an exact minimization method for \eqref{eqin}. Ignoring technical difficulties (such as whether the subgradient of $\|.\|_S$, denoted by $\partial_{\|.\|_S}$, satisfies the condition $\partial \|R f\|_S = R^*(\partial_{\|R f\|_S})$; cf., e.g., \cite[Prop.~5.7]{EkTe}), the optimality condition of \eqref{eqin} reads 
    \begin{equation*}
        R^*\left[ R f - y + \alpha \partial_{\|. \|_S}(R f)\right] = 0 \quad \Longleftrightarrow \quad R f + \alpha \partial_{\|R f\|_S} = y + \eta \,.    
    \end{equation*}
Here, $\eta$ is an element in the nullspace of $R^*$, which equals the orthogonal complement of $\overline{\text{Ran}(R)}$, i.e., the closure of the range of $R$. (Note that our method above is obtained by setting $\eta$ to $0$.) Now, let $Q_R$ denote the orthogonal projector onto $\overline{\text{Ran}(R)}$. Then, the optimality 
conditions are equivalently to
    \begin{equation*}
        (I - Q_R)[g + \alpha \partial_{\|g\|_S} - y] = 0 \,, \qquad g = Rf \,.
    \end{equation*}
These are also the optimality conditions of the proximal functional \eqref{kabel}, if the constraint $g \in \overline{\text{Ran}(R)}$ is included via an 
indicator function, i.e., 
    \begin{equation*}
        \|g - y\|^2 + \alpha \|g\|_S  + \chi_{\overline{\text{Ran}(R)}}(g) \,.    
    \end{equation*}
Since this functional is a sum of two convex functionals (with respect to $g$), splitting methods are a natural choice for its minimzation. For instance, using a Douglas-Rachford splitting, one has to calculate an iteration involving, iteratively, $\prox_{\alpha \|\cdot\|_S}$ and the range projector $Q_R$, and this would be a more accurate way of calculating minimizers of \eqref{eqin}. From this viewpoint, out method can be seen as performing only the first step of such a splitting. The main obstacle of performing more than one step is the application of the range projector, which is not easily available; it could be found by a costly QR decomposition of the Radon transform. Alternatively, one could try approximate range projectors using the range characterization via moments; this has been used, e.g., in \cite{Monard_2016}.
\end{remark}

In general, the benefit of the use of backprojection formulas is their computational efficiency. Note that advanced Tikhonov regularization using, e.g., total variation regularization, yields excellent results but is inferior to our proposed methods in terms of computational speed. We now discuss the computation of the proximal operator for the different choices of our proposed sinogram norms.

\subsection{Proximal maps for $\|\cdot\|_{R_t^{s,p}}^p$ and results}

First, we calculate the proximal operator for the Sharafutdinov norm $\| \cdot \|_{R_t^{s,p}}^p$. Note that it is more convenient to use the $p$-th power of the norm rather than the norm itself. Evaluating the proximal operator involves minimizing 
    \begin{equation*}  
        \frac{1}{2} \|g - y\|_{\R^{N}}^2 + \alpha \frac{1}{p} \|v_{s,t} \Fs g\|_{L^p}^p \,,  
    \end{equation*}
where as before $\Fs$ denotes the Fourier transform with respect to the $\sigma$-variable and $v_{s,t}$ is the weight given in \eqref{vweight}. Note that $|\xi|^2$ in this weight is the symbol of the Laplace operator. We thus implement the weight by using the 1-D Fourier transform of a simple second-order difference stencil $[-1 \ 2 \ -1]$ and evaluate $v_{s,t}$ accordingly. For the numerical computations, we also included an additional  parameter $\scal>0$ into $v_{s,t}$ which scales the frequency domain as follows:
    \begin{equation*}
        v_{s,t}(\xi) =  (\scal + |\xi|^2)^\frac{s}{2}
        \left(\frac{|\xi|}{(\scal + |\xi|^2)^\frac{1}{2} } \right)^t \,.
    \end{equation*}
The proximal operator can be calculated quite easily by observing that due to Parseval's relation, using $\hat{g} = \Fs g$ and $\hat{y} =\Fs y$, there holds
    \begin{align*} 
        &\frac{1}{2} \|g - y\|_{\R^{N}}^2 + \alpha \frac{1}{p}\|v_{s,t} \Fs g\|_{L^p}^p
        = \frac{1}{2} \|\hat{g} - \hat{y}\|_{\R^{N}}^2 + \alpha \frac{1}{p} \|v_{s,t}  \hat{g}\|_{L^p}^p \,.
    \end{align*}
Now, the minimizer of this functional is given by 
    \begin{equation}\label{Shaprox}
        \hat{g} = \prox_{\alpha |v_{s,t}|^p,l^p}(|\hat{y}|) \frac{\hat{y}}{|\hat{y}|} \,, 
    \end{equation}
where $\prox_{\alpha |v_{s,t}|^p,l^p}(|\hat{y}|)$ is the proximal $l^p$ operator on $\R$ with weight $\alpha |v_{s,t}|^p$, which can be applied component-wise on $|\hat{y}|$. Even though, except for certain cases of $p$ (e.g. $p = 1,\infty,2$), no explicit formula for the $l^p$-proximal map is available, it can be calculated efficiently via a Newton method.

\begin{figure}[ht!]
    \includegraphics[width=0.32\textwidth,trim=60 10 50 0, clip]{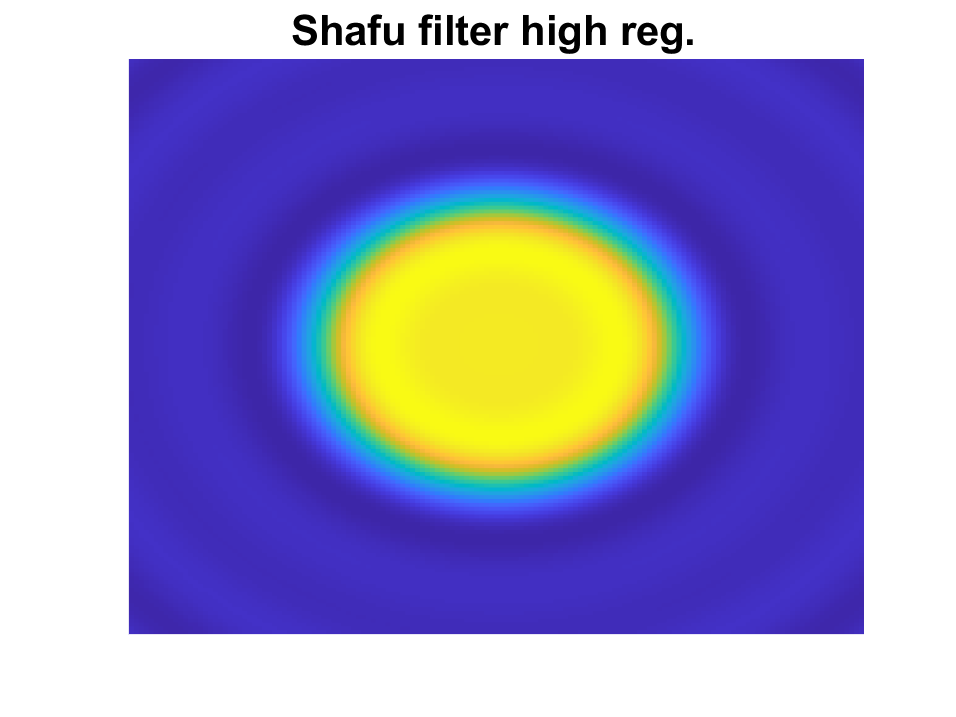}\hfill 
	\includegraphics[width=0.32\textwidth,trim=60 10 50 0, clip]{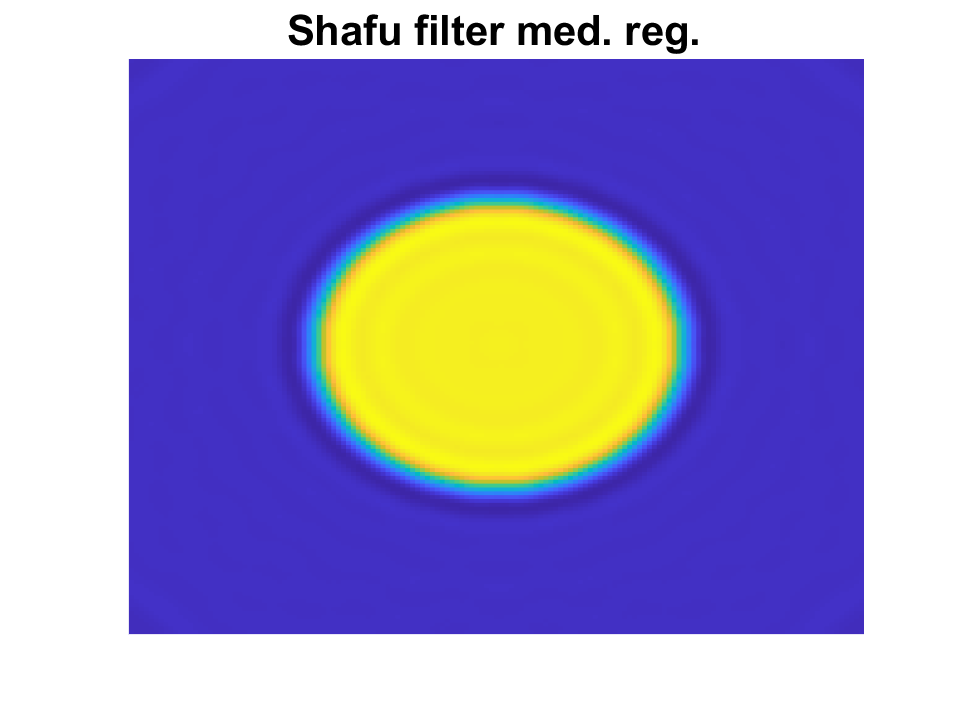}\hfill 
	\includegraphics[width=0.32\textwidth,trim=60 10 50 0, clip]{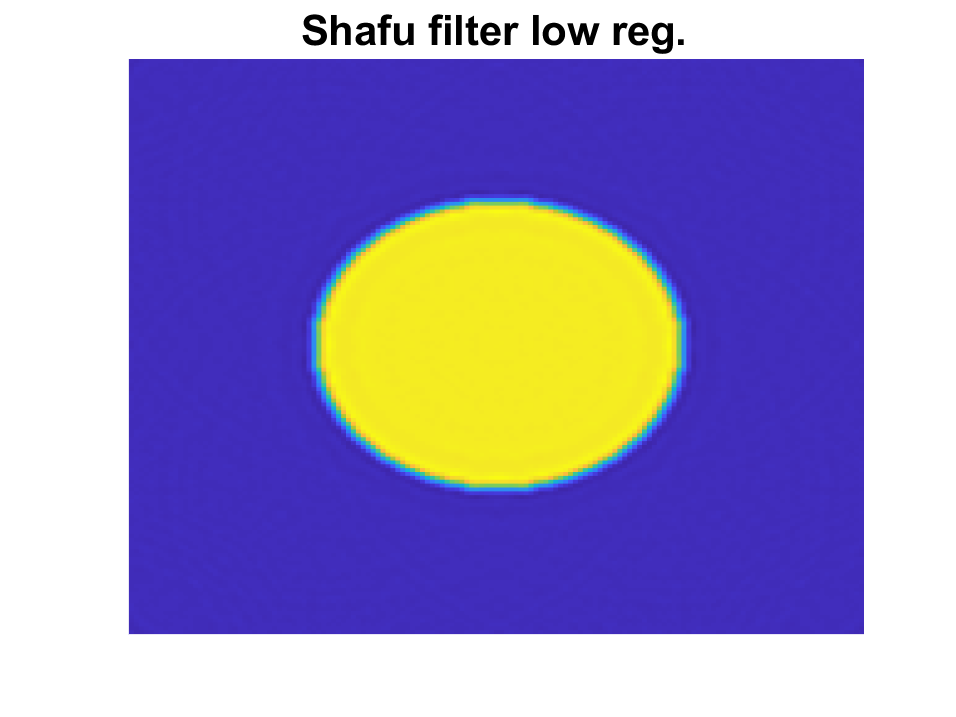}
	\includegraphics[width=0.32\textwidth,trim=60 10 50 0, clip]{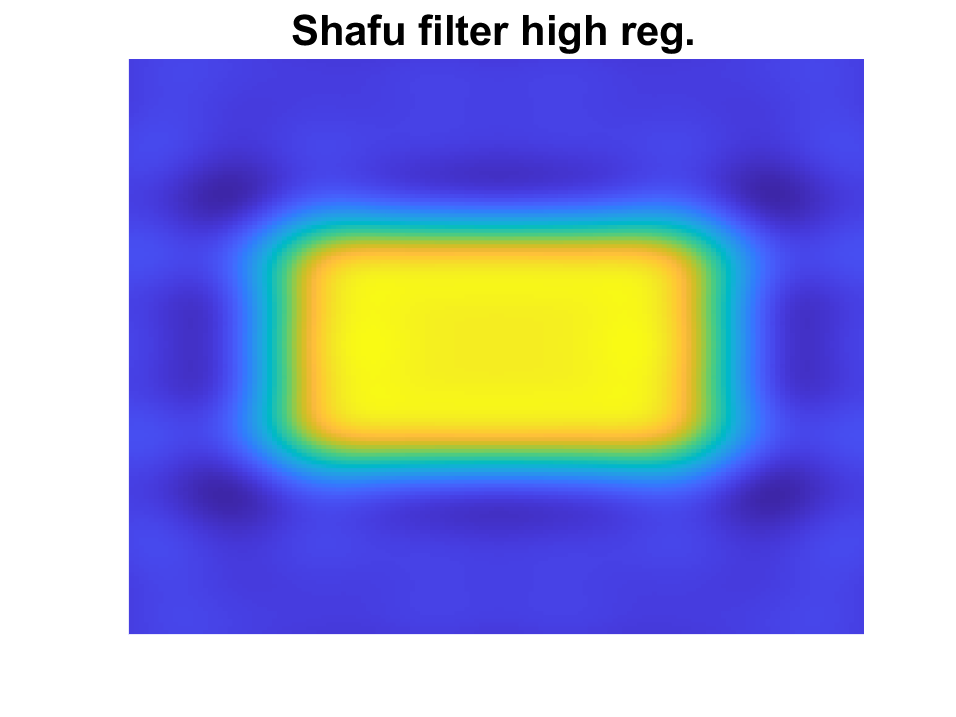}\hfill 
	\includegraphics[width=0.32\textwidth,trim=60 10 50 0, clip]{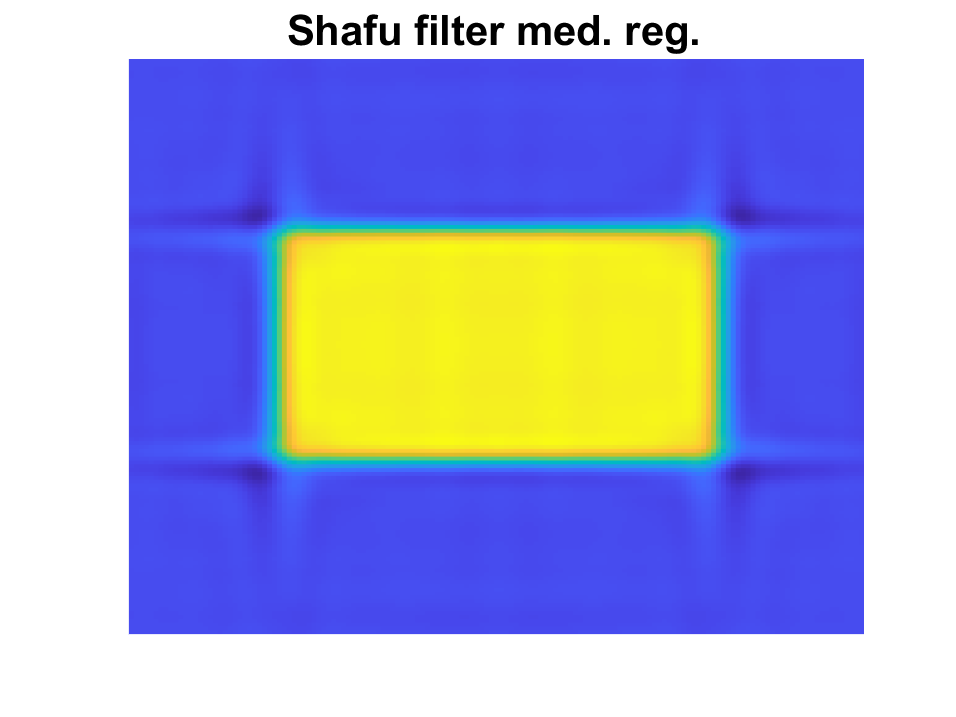}\hfill 
	\includegraphics[width=0.32\textwidth,trim=60 10 50 0, clip]{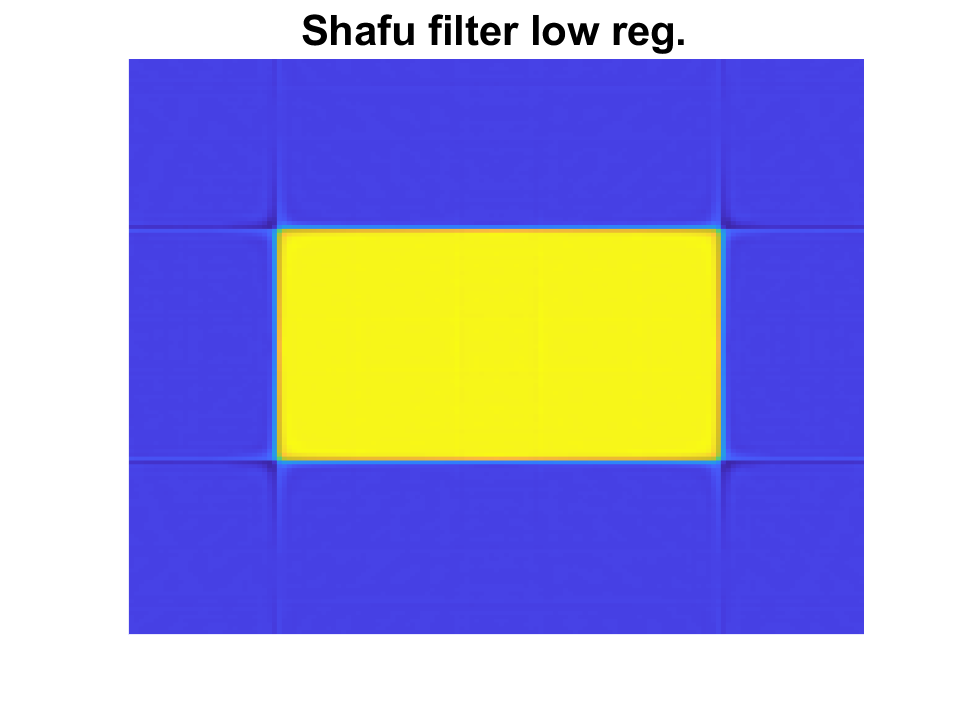} 
	\includegraphics[width=0.32\textwidth,trim=60 10 60 0, clip]{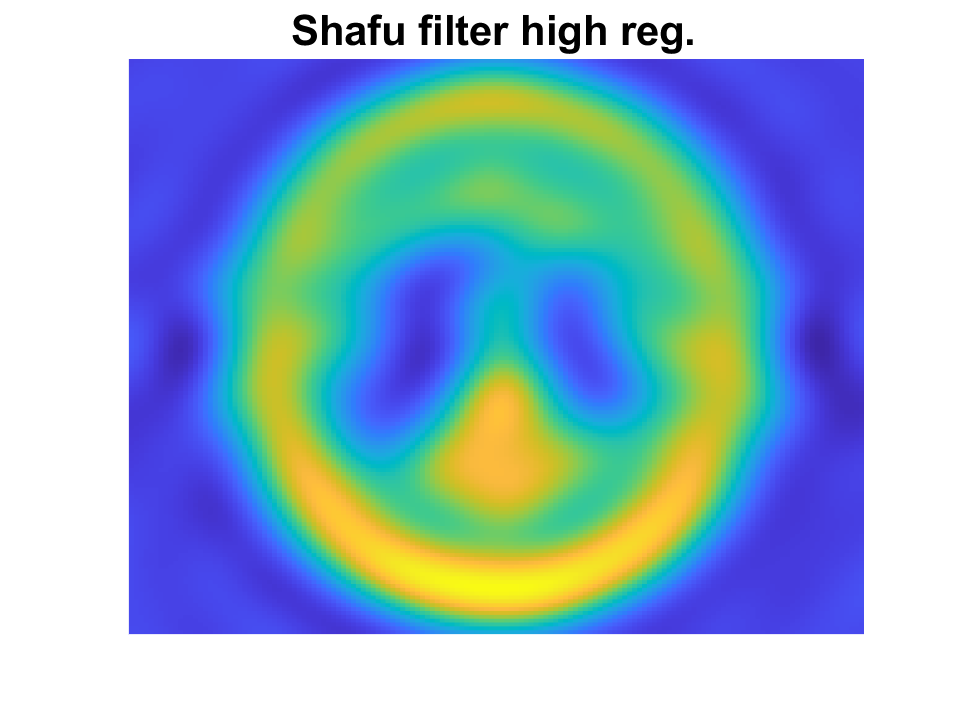}\hfill 
	\includegraphics[width=0.32\textwidth,trim=60 10 60 0, clip]{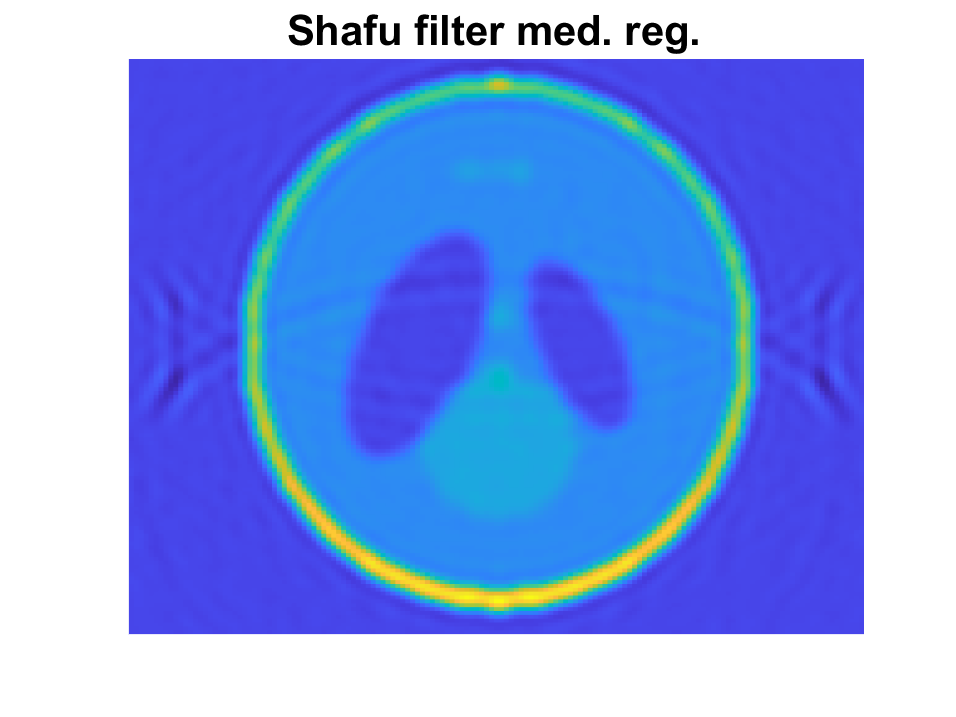}\hfill 
	\includegraphics[width=0.32\textwidth,trim=60 10 60 0, clip]{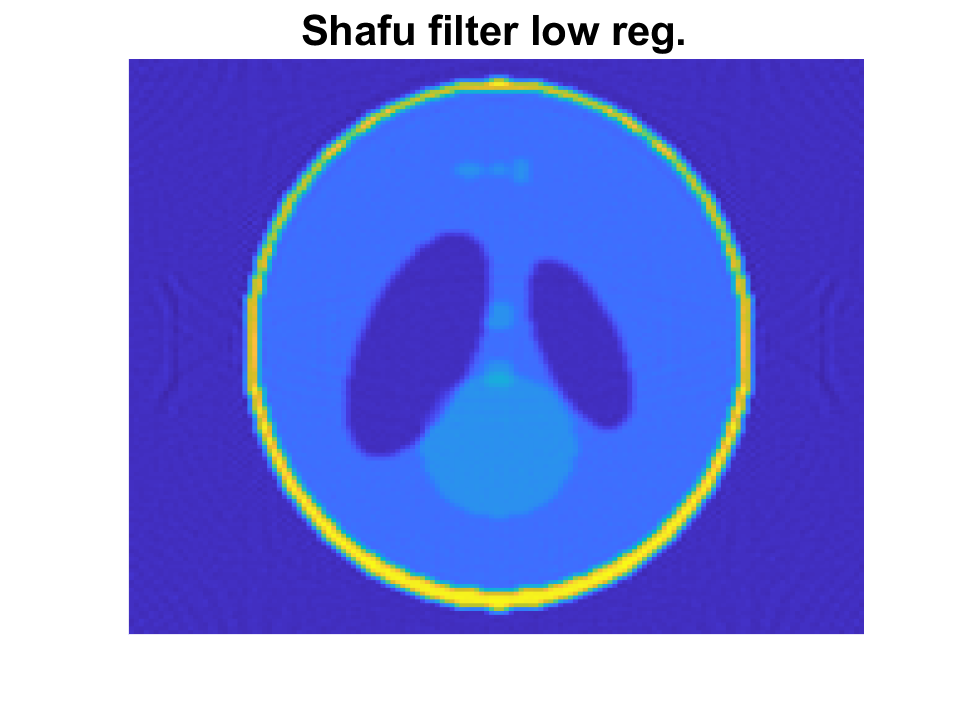}
	\caption{Comparison of the Sharafutdinov filter combined with  filtered backprojection (iradon) for various choices of the regularization parameter and for different exact solutions (ordered row-wise). Left column: high regularization, middle column: medium regularization, right column: low regularization.}
	\label{Fig2} 
\end{figure}

In Figure~\ref{Fig2}, we illustrate the method \eqref{irbb} for the Sharafutdinov filter/norm with exact data, and the parameters $p = 1.01$, $ s=1.1$, $t = 0.1$, $\scal = 1.\text{d-}12$, and for varying $\alpha$ based on Matlab's implementation of the Radon transform. As the ground truth, we use indicator functions of a circle and a rectangle, as well as Matlab's Shepp-Logan phantom, each of size 200 $\times$ 200 pixel. The leftmost column corresponds to high regularization and the rightmost to low regularization. The rows correspond to the different ground truths. The results are reasonable for small regularization. At the edges of the samples both artefacts as well as slight instances of the Gibbs phenomenon appear. Additionally, for the Shepp-Logan phantom, there are unclear artefacts for medium regularization.

\subsection{Proximal maps for $\no$ and results}

In this section, we consider $\|\cdot\|_S = \|\cdot\|_{\no}$, and sketch how to calculate $\prox_{\alpha \|\cdot\|_{\no}}$ numerically in a discrete setting. For this, we discretize the $\sigma$-derivative by a simple forward difference operator, denoted by $D_\sigma$, and the $L^2$-norm is replaced by a pixelwise Euclidean norm. We then have to minimize
    \begin{equation} \label{proxmin} 
        \frac{1}{2} \|g - y\|_{\R^{N}}^2 + \alpha \|D_\sigma g\|_{L^\infty\text{-}L^1}\,.  
    \end{equation}
In the above functional, with a slight abuse of notation, we consider $g$ and $y$ as vectors in the first term, while $g$ is identified with a matrix (arranged by offset $\times$ angle indices) in the second expression. Hence, $D_\sigma g = h$ can be represented by a matrix $h = h_{i,j}$, and the ${L^\infty\text{-}L^1}$-norm is then given by 
    \begin{equation*}
         \|h\|_{L^\infty\text{-}L^1} = \sup_{j} \sum_{i=1}^{N} |h_{i,j}| \,.
    \end{equation*}
The structure of problem \eqref{proxmin} allows the application of well-known efficient convex optimization tools such as FISTA \cite{BeTe}, the Chambolle-Pock method \cite{ChPo}, or an augmented Lagrangian (aka split Bregman) method \cite{split}. For our examples, following tests of some of these approaches, we decided on using the continuation penalty method of \cite[Alg.~2]{WangYang}. A central ingredient in all of these methods is the calculation of the proximal operator for the gradient norm, in our case the ${L^\infty\text{-}L^1}$-norm. While there is no explicit formula in this setting, an efficient numerical algorithm can for example be found in \cite{Bejar}. 

Note that although \eqref{proxmin} looks quite similar to a standard total-variation denoising problem (the ROF-filter), it has the significant difference that the total variation involved is essentially one-dimensional, since only the $\sigma$-gradient is taken. It makes quite a difference for total-variation filtering if it is done in one or two dimension. For the 1D-case, there are some interesting algorithms available, for example the taut-string method \cite[Sec. 4.4]{Scherzbook}, \cite{Taut1,Taut2}; another interesting property of  1D-total variation not valid in the 2D case is found in \cite[Section~4.2]{KiWo}. Leveraging the possibilities of one-dimensional total variation filtering for computing the proximal operator is relegated to future work.

\begin{figure}[ht!]
	\includegraphics[width=0.32\textwidth,trim=60 10 50 0, clip]{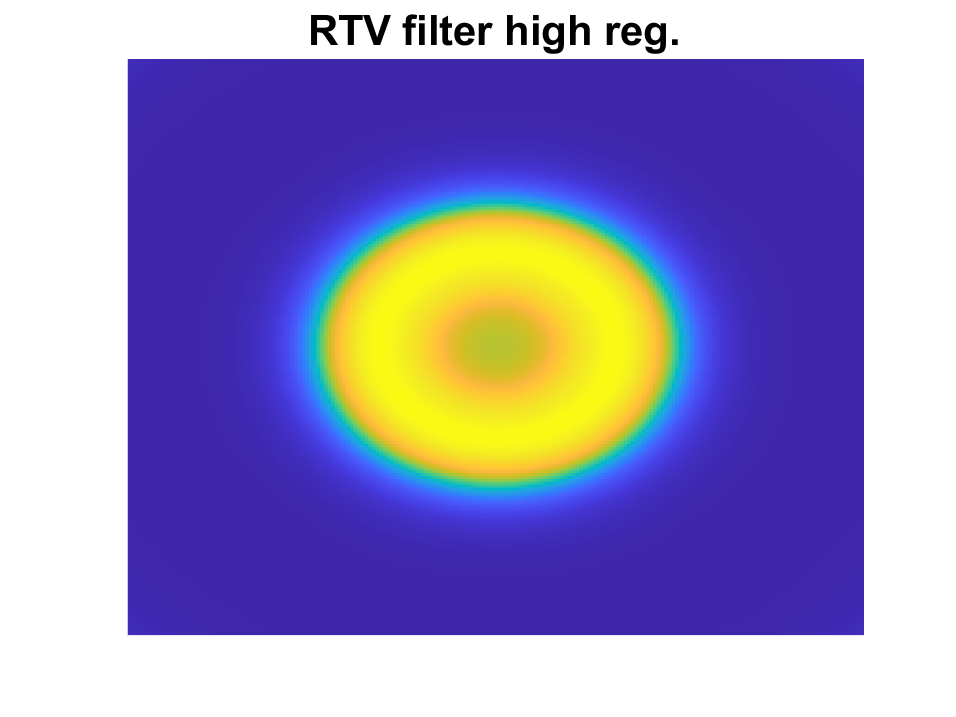}\hfill 
	\includegraphics[width=0.32\textwidth,trim=60 10 50 0, clip]{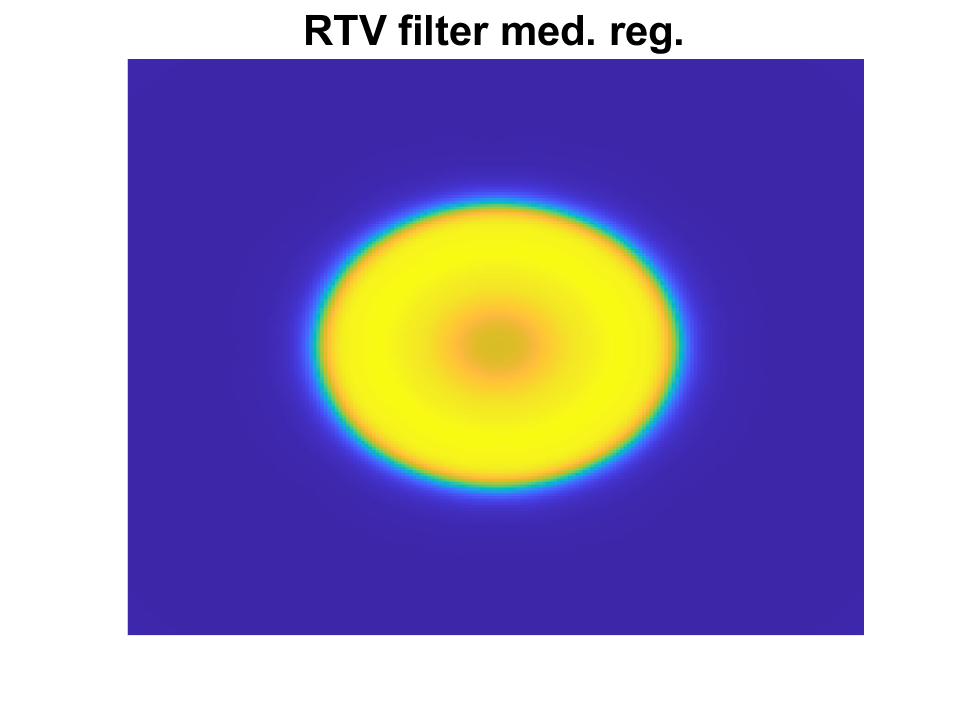}\hfill 
	\includegraphics[width=0.32\textwidth,,trim=60 10 50 0, clip]{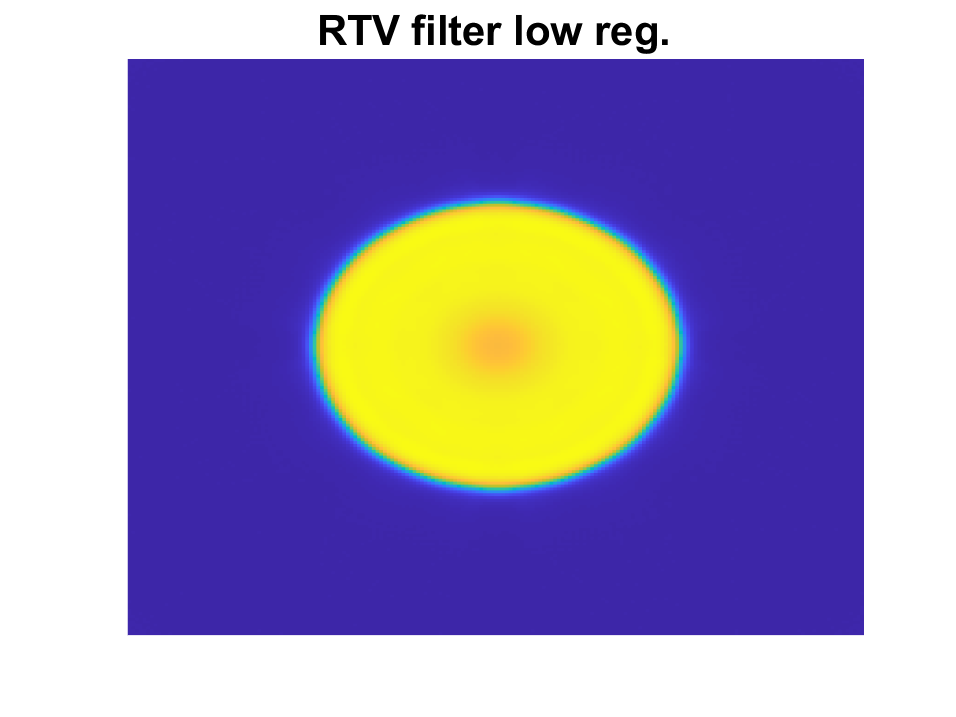}
	\includegraphics[width=0.32\textwidth,trim=60 10 50 0, clip]{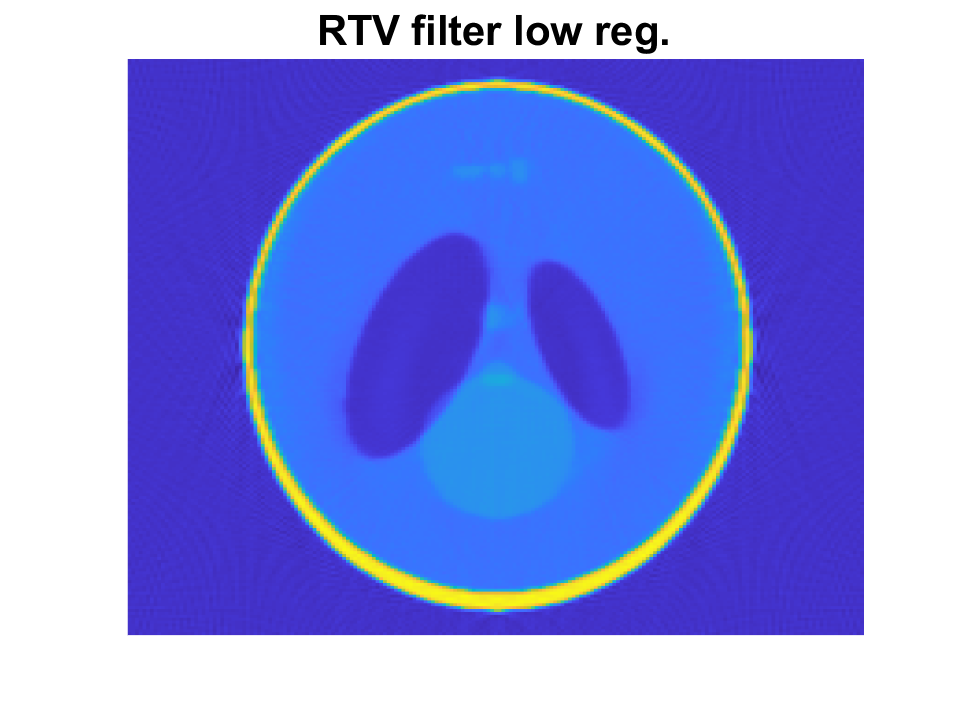}\hfill 
	\includegraphics[width=0.32\textwidth,trim=60 10 50 0, clip]{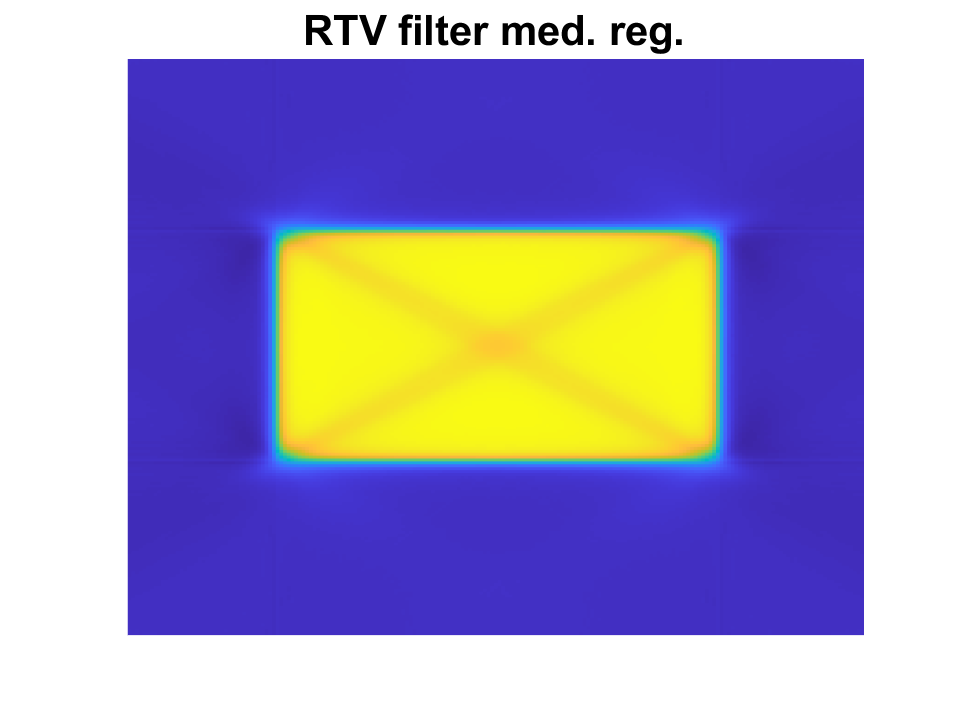}\hfill 
	\includegraphics[width=0.32\textwidth,trim=60 10 50 0, clip]{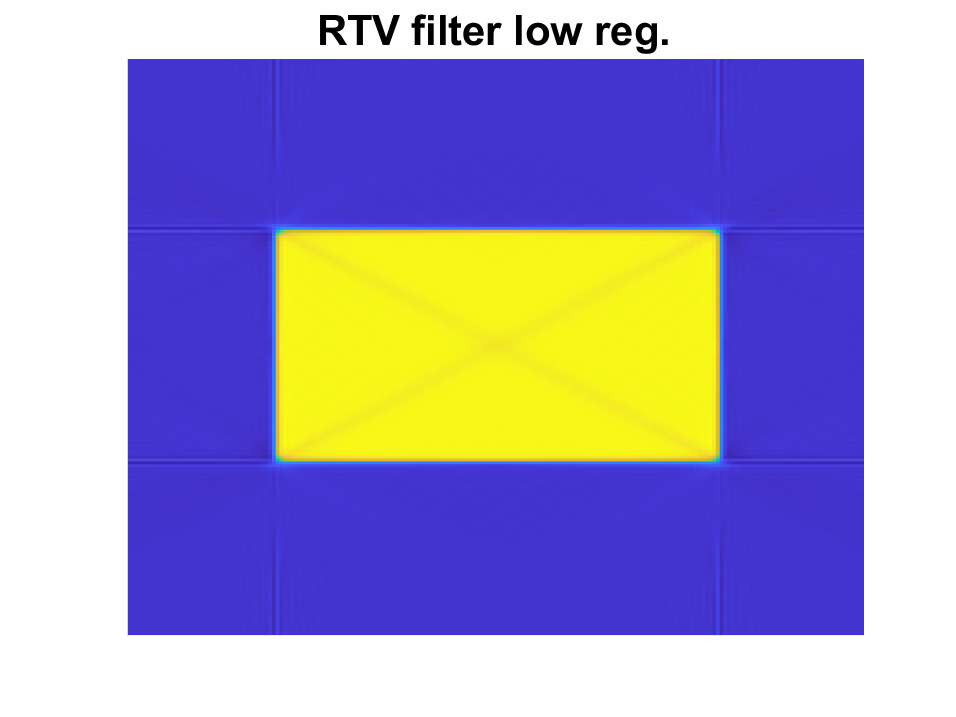} 
	\includegraphics[width=0.32\textwidth,trim=60 10 60 0, clip]{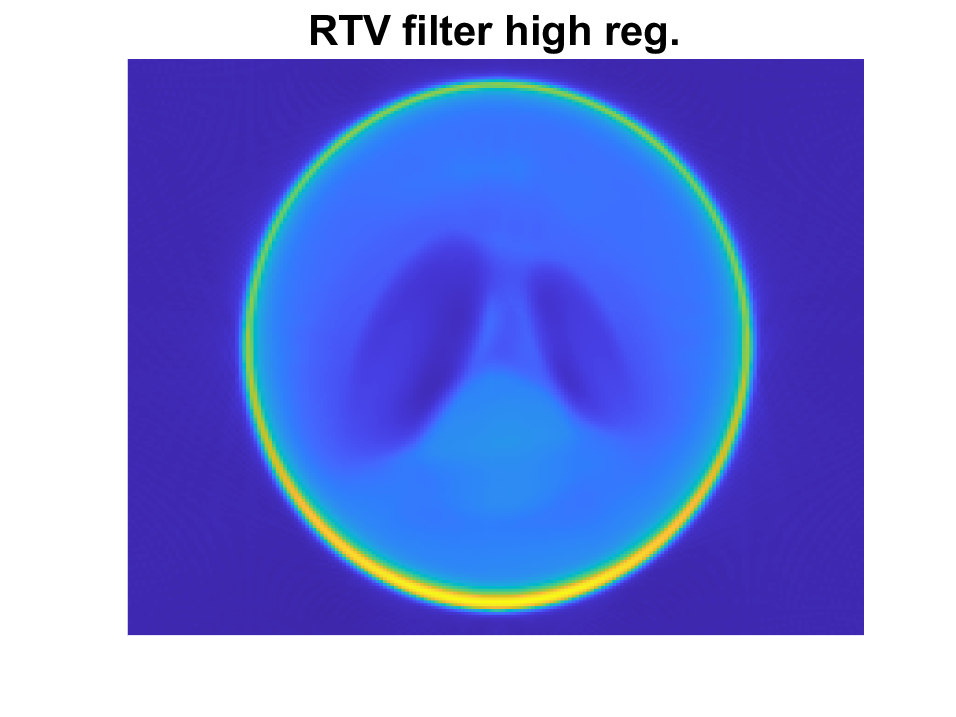}\hfill 
	\includegraphics[width=0.32\textwidth,trim=60 10 60 0, clip]{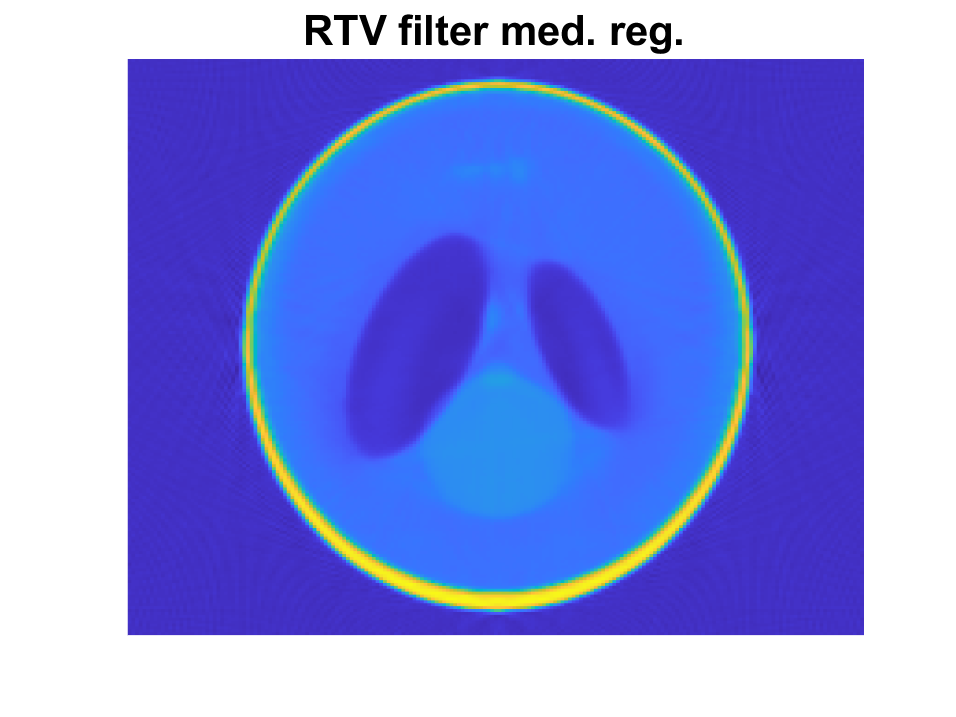}\hfill 
	\includegraphics[width=0.32\textwidth,trim=60 10 60 0, clip]{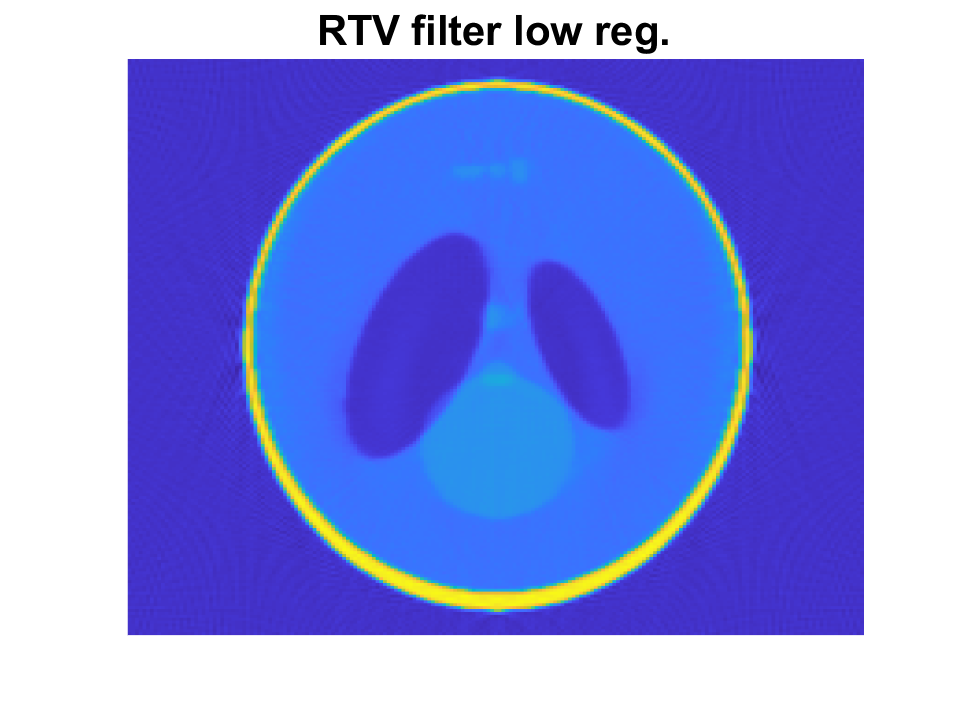}
	\caption{Comparison of the RTV filter combined with filtered backprojection (iradon) for various choices of the regularization parameter $\alpha$ and for different exact solutions (ordered row-wise). Left column: high regularization, middle column: medium regularization, right column: low regularization.}
	\label{Fig1} 
\end{figure}
 
Figure~\ref{Fig1} presents the results of using \eqref{irbb} with the RTV-filter. What can be observed from the circular and rectangular samples is that the RTV-filter allows for sharp edges, but removes certain parts which correspond to maxima of the Radon transform. For the circle image, this means that part of the center is ``removed'',  while for the rectangle, the diagonals are partially removed, since they are the cords of maximal length. We point out that the maximal height of the reconstructed solution (not visible in the plots) is quite close to the true height of the ground truth, i.e., we observe only a \emph{low loss of contrast}.

The behaviour of the artefacts and the low loss of contrast can actually be explained for the circle image via the following example: In the case of the image $f$ being the indicator function of a circle, as in \eqref{radsqu}, it is possible to calculate the RTV filter analytically under the hypothesis that $y_\alpha:= \prox_{\alpha \|\cdot\|_{RTV}}(R f)$ is radially symmetric and has convex level sets. Let $Rf$ be given by \eqref{radsqu}. Under the given assumptions, the filtered data satisfy  $\|y\|_{RTV} = 2 \|y\|_{\infty}$ due to Lemma~\ref{help1}. It then follows that $ \prox_{\alpha \|\cdot\|_{RTV}}(R f) =  \prox_{2\alpha \|\cdot\|_{\infty}}(R f)$, and the $L^\infty$-prox can be easily calculated (cf.~\cite[Ex. 6.48]{Beckbook}) using that $Rf\geq 0$: 
    \begin{equation*}
        \prox_{2\alpha \|\cdot\|_{\infty}}(R f)(\sigma,\theta) = \min\{Rf(\sigma,\theta),\beta\}\,,
    \end{equation*}
where $\beta\geq 0$ is a cutoff value which can be calculated from $\lambda$ ($\beta$ may become $0$ if the input has small enough $L^1$-norm). Thus, this filter simply cuts off the larges values of $Rf$. The difference between $Rf$ and $\prox_{2\alpha \|\cdot\|_{\infty}}(R f)$ is in this case even in the range of the Radon transform (and is radially symmetric), and its inverse can be calculated explicitly using integral formulas \cite[p.22]{Natterer}. 

\begin{figure}[ht!]
    \includegraphics[width=0.45\textwidth]{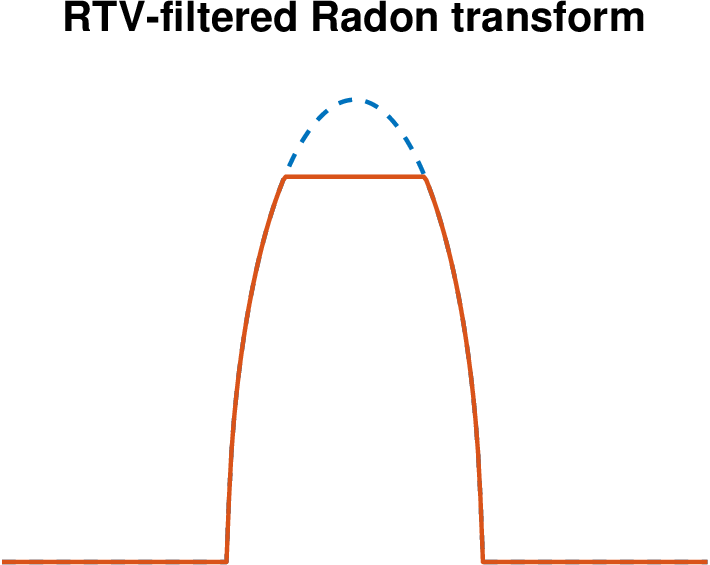} \hfill 
    \includegraphics[width=0.45\textwidth]{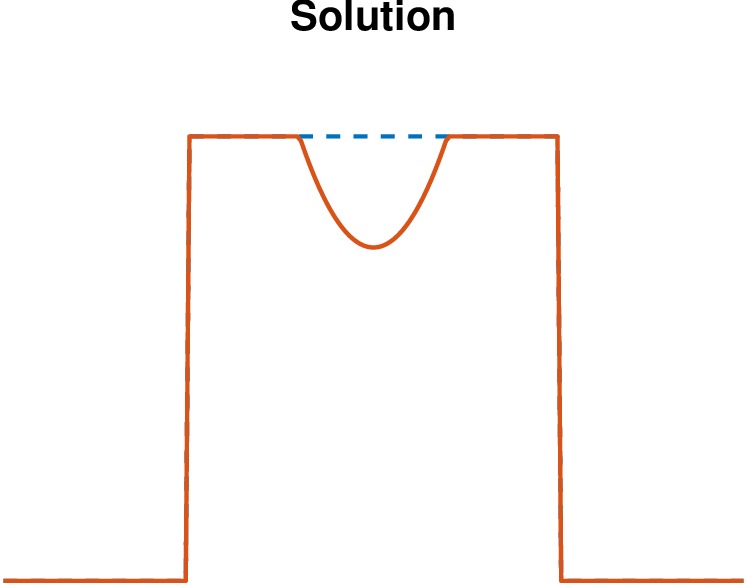}
    \caption{Illustration of the effect of an RTV-filter if $f$ is the indicator function of a disk. Left: Radon transform (dashed blue) and filtered Radon transform (red). Right: corresponding solutions in image space after applying $R^{-1}$.}\label{Fig0}
\end{figure}

These results are illustrated in Figure~\ref{Fig0}. On the left, we display the filtered data, $\prox_{2\alpha \|\cdot\|_{\infty}}(R f)$, in red, and, by blue dashed lines, the true data $Rf$ for some fixed angle. On the right, we display a cross section of the image resulting by applying the inverse Radon transform to the filtered data. It can be observed that edges are preserved, but parts of the mass in the center is removed. This analytical result perfectly matches those of the numerical experiments presented in Figure~\ref{Fig1}. Note that the fact that in this case the proximal mapping for $\no$ equals the $L^\infty$-proximal operator is not true in general, and does not hold in case of noisy data. Note also that in Figure~\ref{Fig1}, we do not have a loss of contrast apart from the removal of the center part: the filtered solution reaches the same height as the ground truth. This is different to a  \TV-regularization, where the height of the resulting solution is usually below the true height. This appears to be one advantage of our proposed method.


\begin{remark} \rm
The proposed sinogram norm filtering is a natural generalization of commonly used filtered backprojection formulas. The well-known inversion formula in Fourier space (here stated in 2D): \cite[Eq.~(2.4)]{Natterer}, 
    \begin{equation}\label{thisis}
        f(x) = \frac{1}{2 (2\pi)} R^* K_\sigma R f \,,
        \qquad \text{where} \qquad
        K_\sigma g := \F_1^{-1} |\sigma| \F_1 g 
    \end{equation}
leads to filtered backprojection when 
the multiplier $|\sigma|$ is replaced by an appropriate filter $K_\phi= \F_1^{-1} \phi(\sigma) \F_1$.
    
Suppose we would like to use Tikhonov regularization with, say, a $\|f\|_{H_0^s}$-norm as penalty, and by  following our rationale, we replace this penalty by the equivalent sinogram norm $\|R f\|_{H^{s + 1/2}}$, 
and for reconstruction use the operation $R^{-1}\prox_{\alpha \|\cdot\|_S}(y)$ as in \eqref{appfil}. It turns out that the the resulting regularization is exactly the above backprojection formula, where $\phi(\sigma) = |\sigma| w(\sigma)$ and $w$ the Fourier weight in the definition of  $\|\cdot\|_S$. Conversely, we can also obtain from suitable filtered backprojection formulas its interpretation as sinogram-norm regularization using a appropriate weights in  Fourier space. 

In this context, our proposed approach using the Sharafutdinov norm is a natural generalization of filtered backprojections.
Since the associated proximal map is based on the Fourier transform, we can implement the Sharafutdinov filtering using \eqref{bappfil} in the form of the following nonlinear backprojection formula:
    \begin{equation*}
        f(x) \approx  \frac{1}{2 (2\pi)} R^* K_{|\sigma| \prox_{S}(y)} y \,,
    \end{equation*}
where $\prox_{S}$ is given by \eqref{Shaprox} (and which depends nonlinearly on the data). A conceptually similar nonlinear filtering has been proposed recently by using frames \cite{Halti2024}; cf.~also \cite{HuRa2021}. 
\end{remark}

\subsection{Noisy and incomplete data}

We also tested our proposed method with noisy data, the results of which can compete with any usual regularization method. Due to limited space, we do not report the details here, but instead refer to the preprint \cite{KiHuarxiv}. For the Sharafutdinov filter, we observe the highest senstivity with respect to the parameters $s$ and $p$, while $t$ plays a negligible role. The computation time for the proposed method was around 5--10 times higher than standard backprojection. 

We now consider the incomplete data case, for which we show that our sinogram-space filtering approach has a striking advantage compared to Tik\-ho\-nov-like regularization methods. For this, consider incomplete sinogram data which are obtained by applying a filtering matrix $W$ to the data, where $W$ is a projection matrix which returns only part of the sinogram data. The model equation is then $W R f = y_{\text{inc}}$, where $y_{\text{inc}}$ represents the incomplete data. Such a model can be easily integrated into our filtering approach, since we only have to replace the usual prox-operator by
    \begin{equation*}
        \prox_{\text{inc}}(y_{\text{inc}}) = \argmin_z \frac{1}{2} 
        \|W z -  y_{\text{inc}}\|^2 + \alpha \|z\|_S \,,
    \end{equation*}
and, as before, apply an approximate inversion yielding $f = R_\alpha^\# \prox_{\text{inc}}(y_{\text{inc}})$. In contrast to other regularization methods, e.g., \cite{EHN,Ho,SKHK}, we can use simple inversion methods like iradon, and do not have to set up a forward model operator. (Even for modern advanced iterative method, one typically requires an implementation of $R$ \emph{and} its adjoint, which can be nontrivial).

\begin{figure}[ht!]
\begin{minipage}{0.32\textwidth}
\begin{center}
{RTV}\\[1ex]
\includegraphics[width=\textwidth,trim=50 10 50 0, clip]{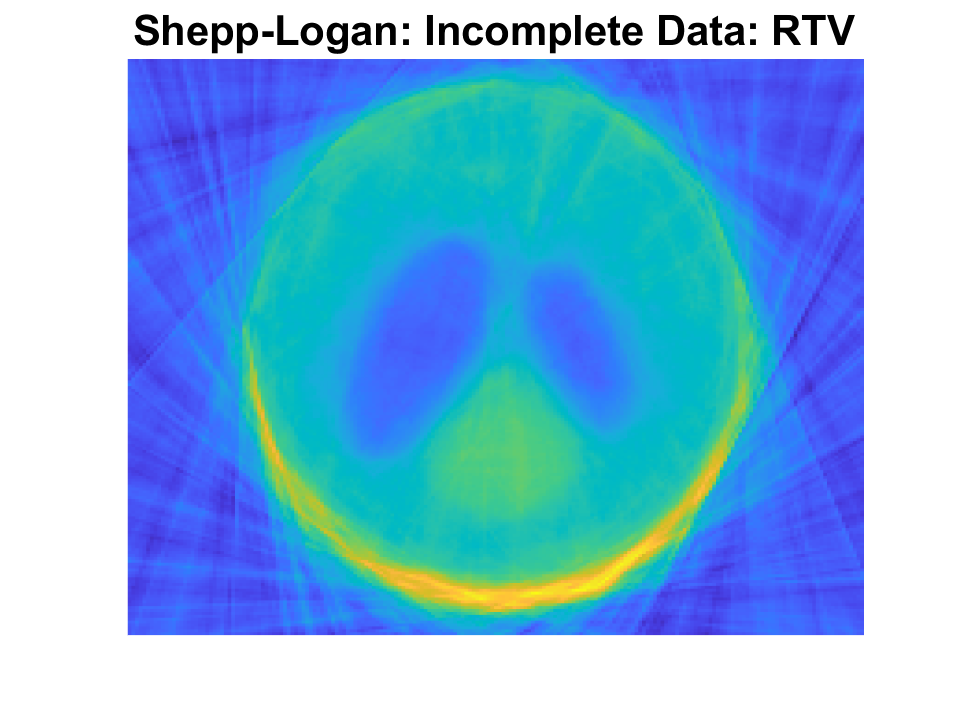}\\
 \includegraphics[width=\textwidth,trim=50 10 50 0, clip]{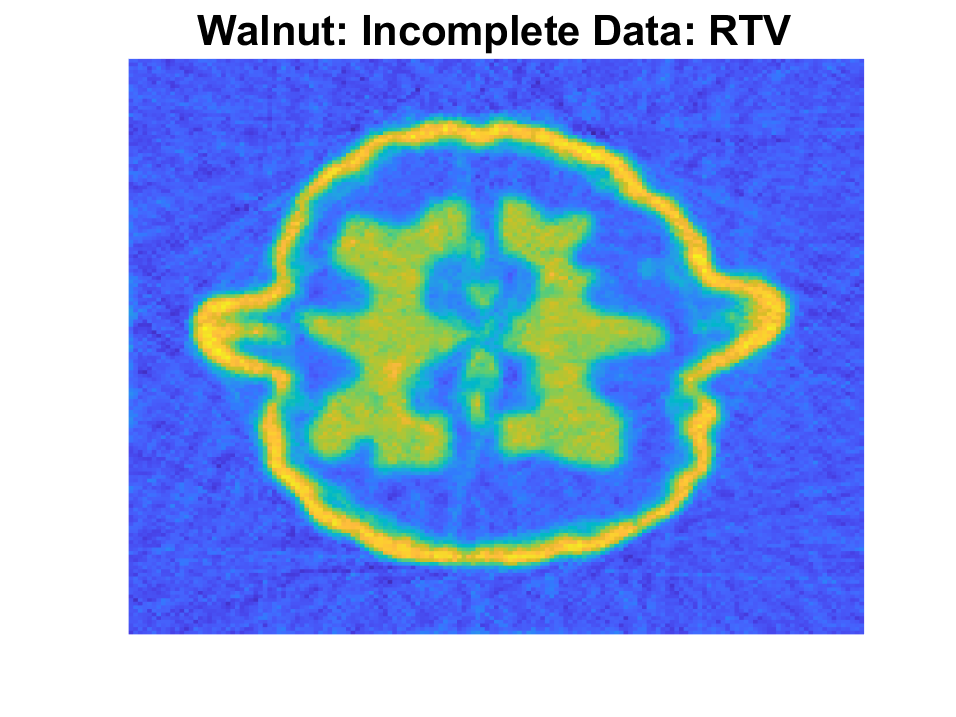}\\
 \includegraphics[width=\textwidth,trim=50 10 50 0, clip]{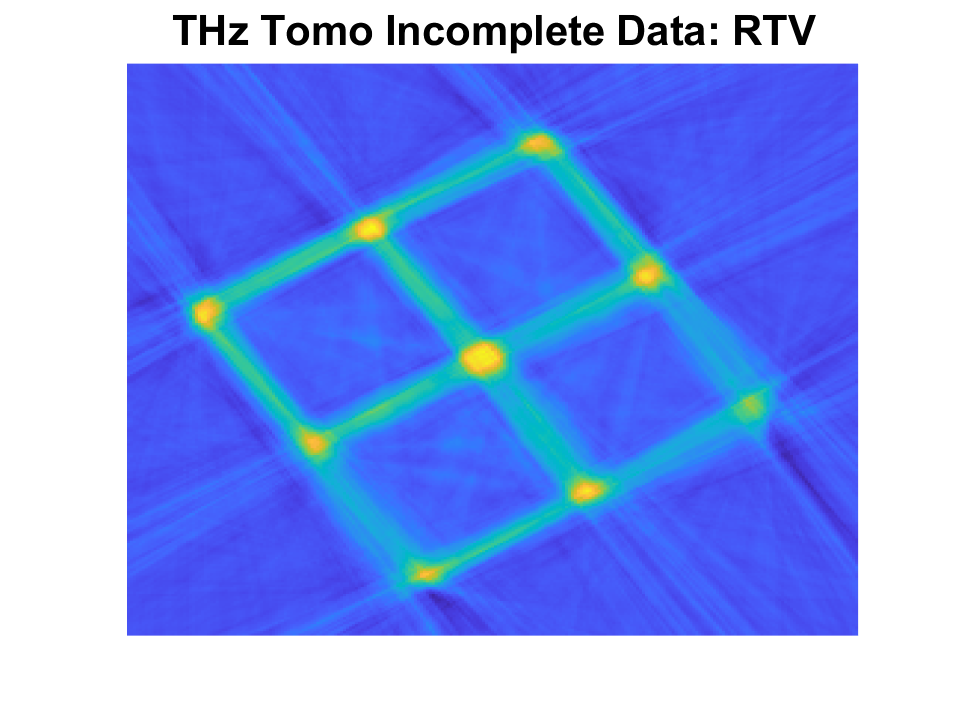}
\end{center}
\end{minipage}  
\begin{minipage}{0.32\textwidth}
\begin{center}
{Sharafutdinov}\\[1ex]
\includegraphics[width=\textwidth,trim=50 10 50 0, clip]{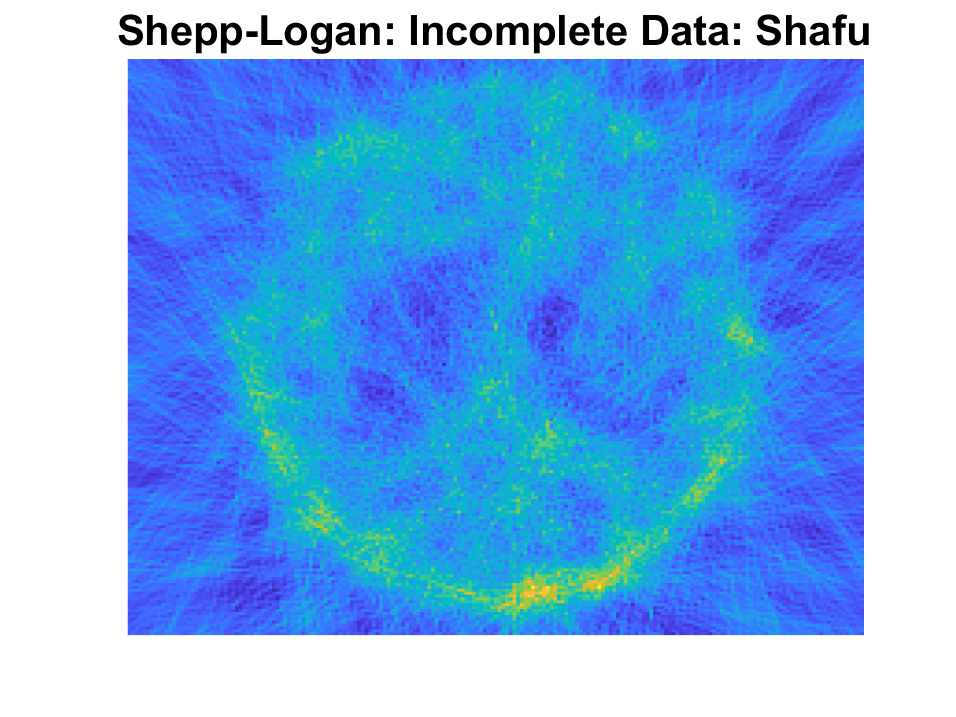}\\
 \includegraphics[width=\textwidth,trim=50 10 50 0, clip]{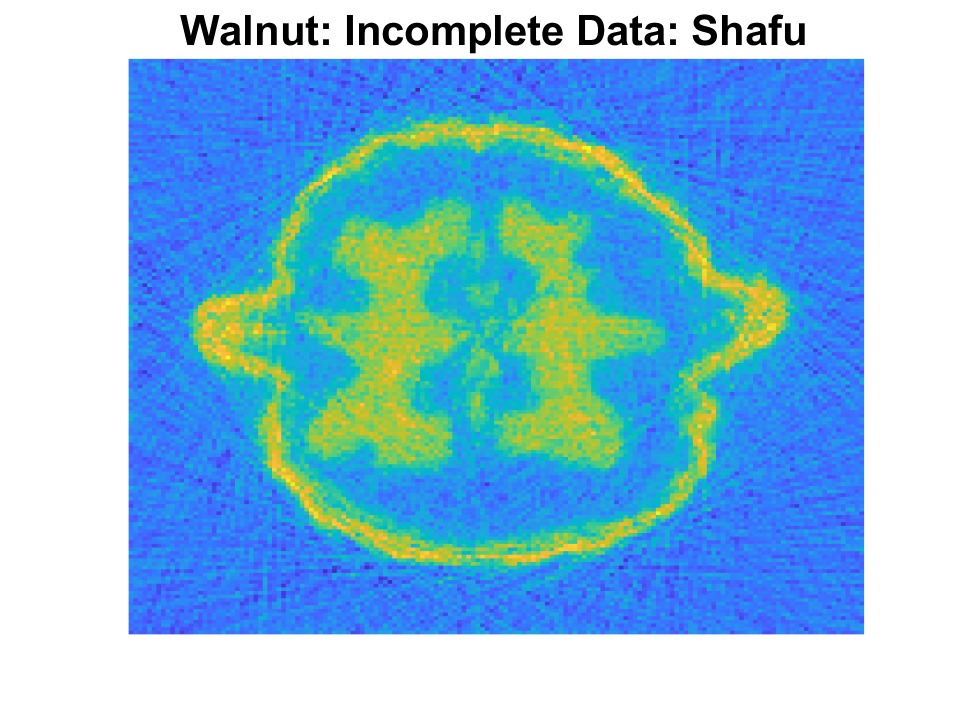}\\
 \includegraphics[width=\textwidth,trim=50 10 50 0, clip]{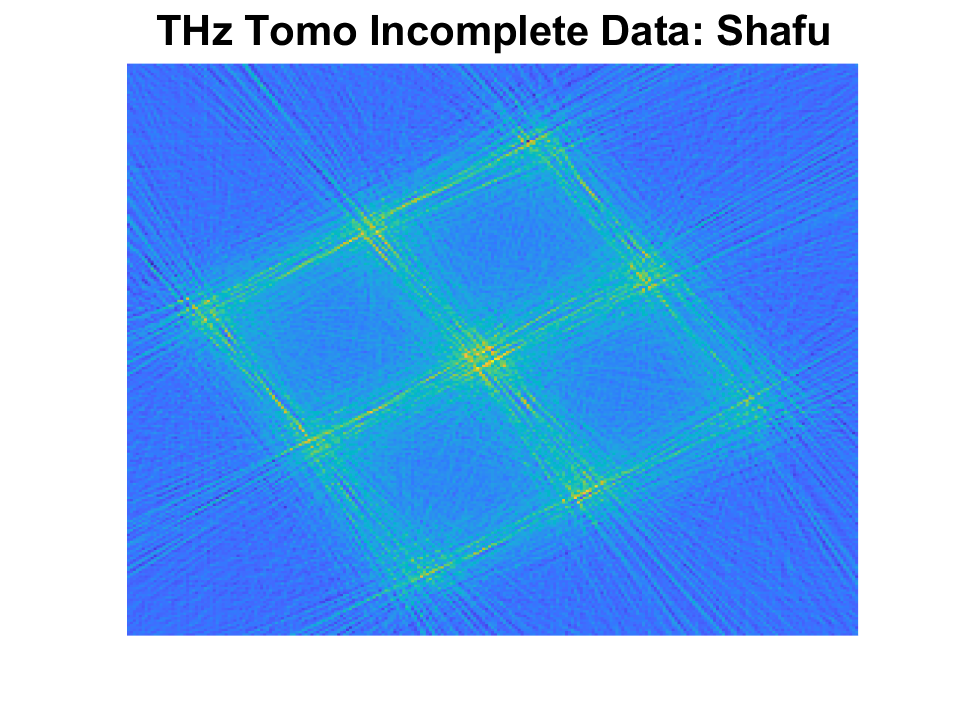}
\end{center}
\end{minipage}  
\begin{minipage}{0.32\textwidth}
\begin{center}
{CGNE}\\[1ex]
\includegraphics[width=\textwidth,trim=50 10 50 0, clip]{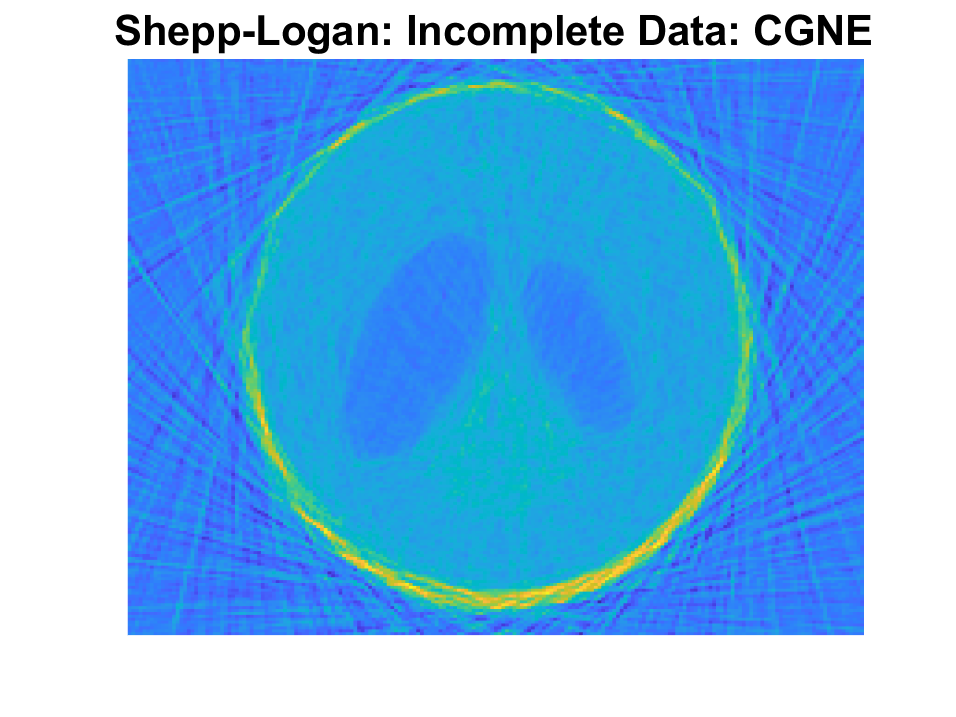}\\
 \includegraphics[width=\textwidth,trim=50 10 50 0, clip]{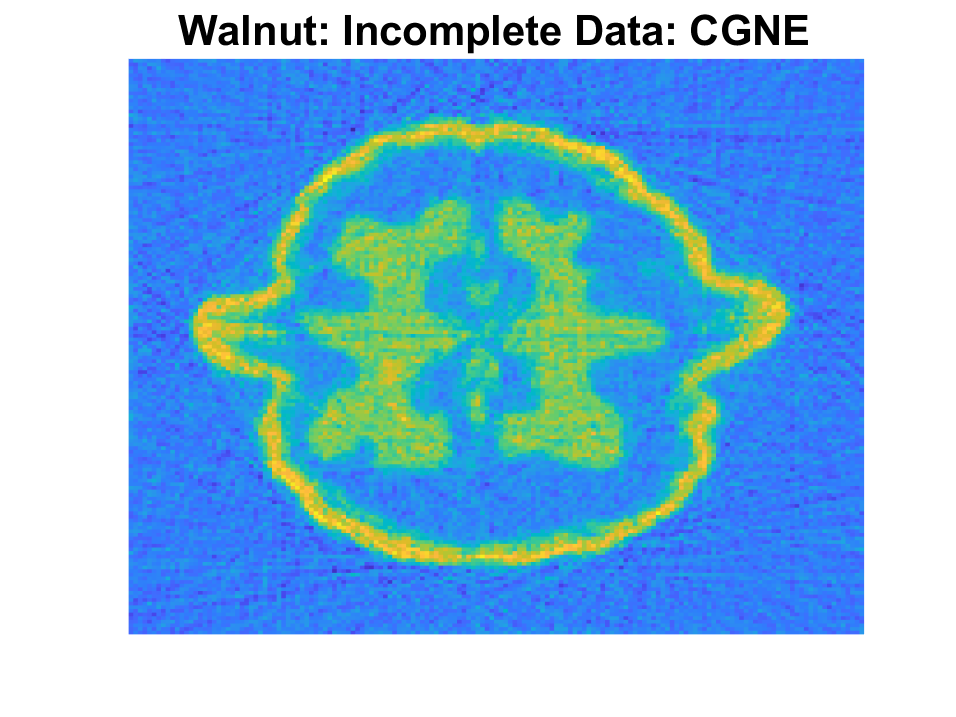}\\
 \includegraphics[width=\textwidth,trim=50 10 50 0, clip]{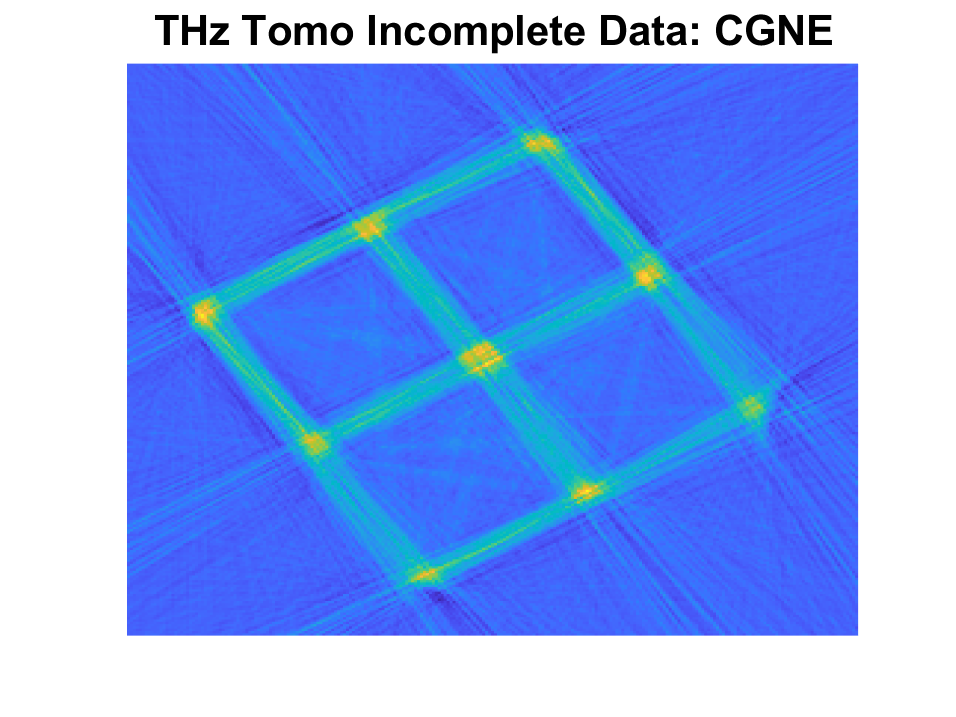}
\end{center}
\end{minipage}
\caption{Reconstruction using incomplete data: Columns: methods RTV, Sharafutdinov-filter, and CGNE inversion. Rows: Shepp-Logan phantom from Matlab, Walnut data from \cite{fips,Finnish}, and THz tomography data from \cite{Foso22,Hub22}.
}
\label{figinc}
\end{figure}

We tested this approach for (i) the usual Matlab Shepp-Logan model, (ii) for the Walnut X-ray data from the Finnish Inverse Problems Society \cite{fips,Finnish}, and (iii) for data from  terahertz (THz) tomography for nondestructive testing from \cite{Foso22,Hub22}. In case (i), we use 10\% percent of the data for a $200 \times 200$ pixel ground truth, in case (ii) 50\% data for the $164\times 164$ pixel ground truth dataset (Data164.mat), and in case (iii) we use 25\% of (downsampled) data for a $264 \times 264$ pixel ground truth. The data were selected by randomly picking the corresponding amount of indices in both the angular and offset variable. For comparison, we also consider the corresponding result for incomplete data using a CGNE iterative reconstruction; see~\cite{EHN,HaCGNE}. All parameters were selected to obtain minimal errors. For the Sharafutdinov filter, the relevant parameters were set to $p\sim 1.01$ and $s \sim 1$. The results are illustrated in Figure~\ref{figinc}. We observe a visually appealing result for the RTV-filter, while the result with the Sharafutdinov filter is less appealing. Compared to CGNE, the RTV-filter has fewer fine-structured artefacts and a better contrast. As mentioned above, CGNE requires an implementation of the forward method and its adjoints (we used a sparse matrix representation), which restricts CGNE in the THz case to downsampled data, while the sinogram filters were free of such a constraint.

\begin{figure}[ht!]
\includegraphics[width=0.31\textwidth,trim=50 10 50 0, clip]{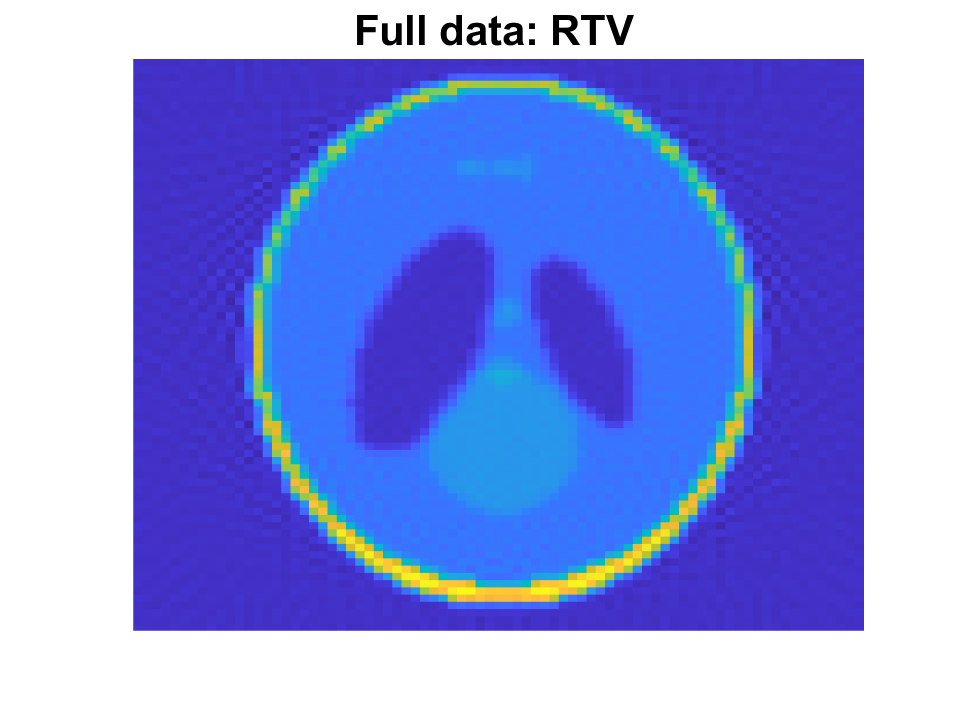} 
\includegraphics[width=0.31\textwidth,trim=50 10 50 0, clip]{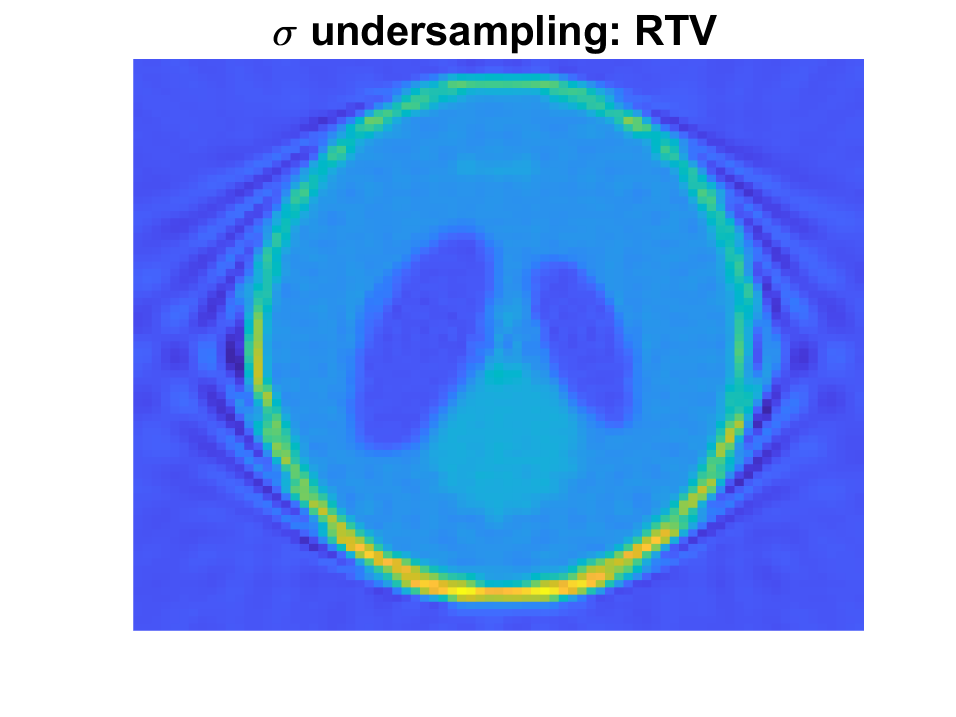} 
\includegraphics[width=0.31\textwidth,trim=50 10 50 0, clip]{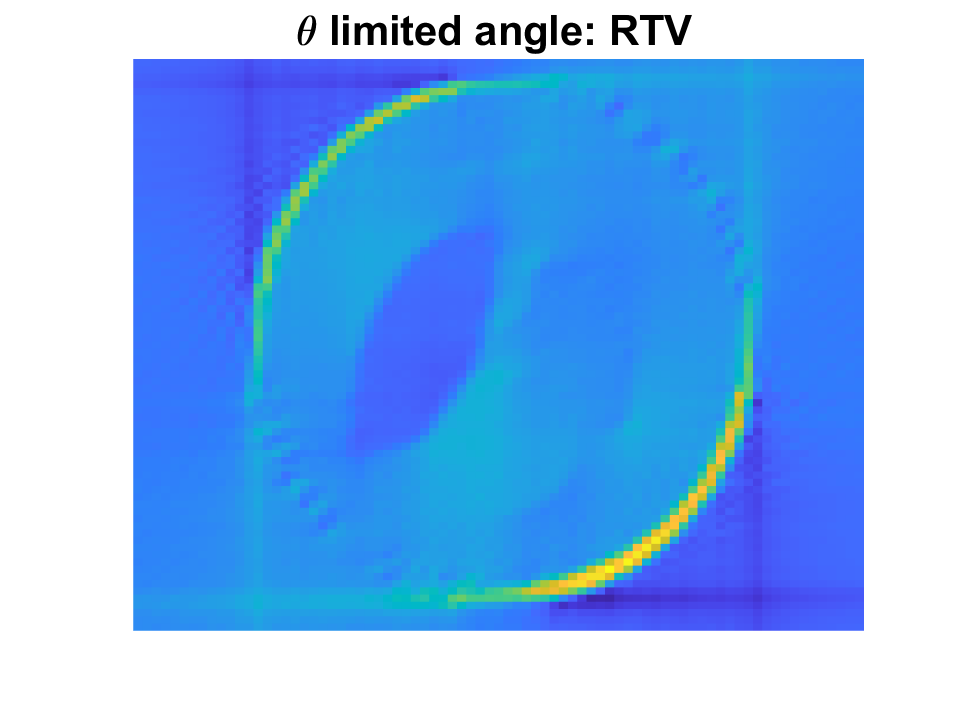} \\
\includegraphics[width=0.31\textwidth,trim=50 10 50 0, clip]{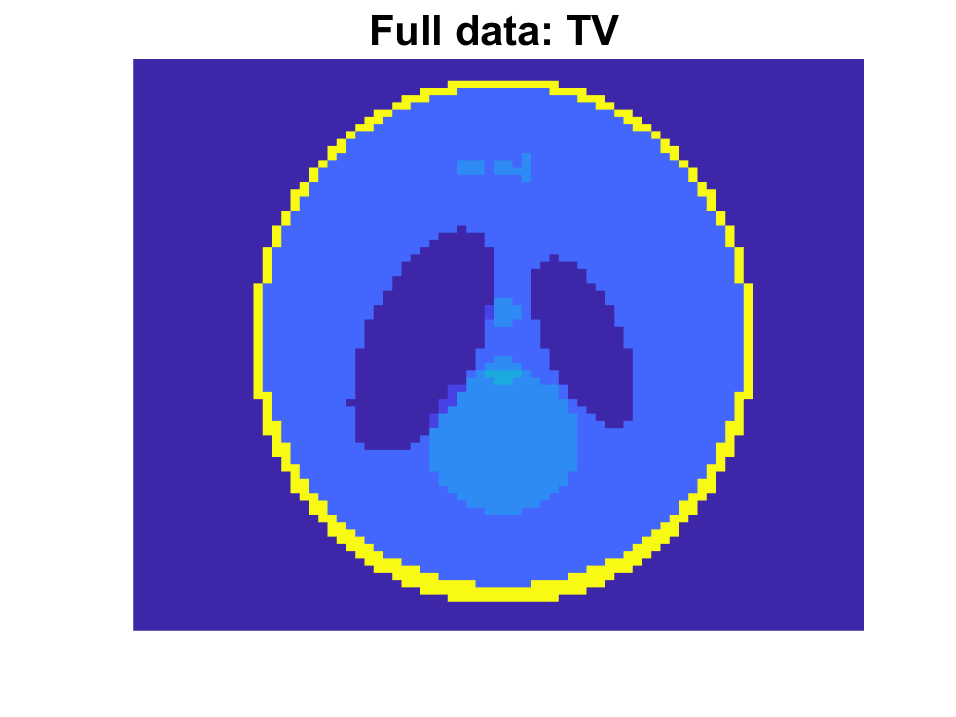} 
\includegraphics[width=0.31\textwidth,trim=50 10 50 0, clip]{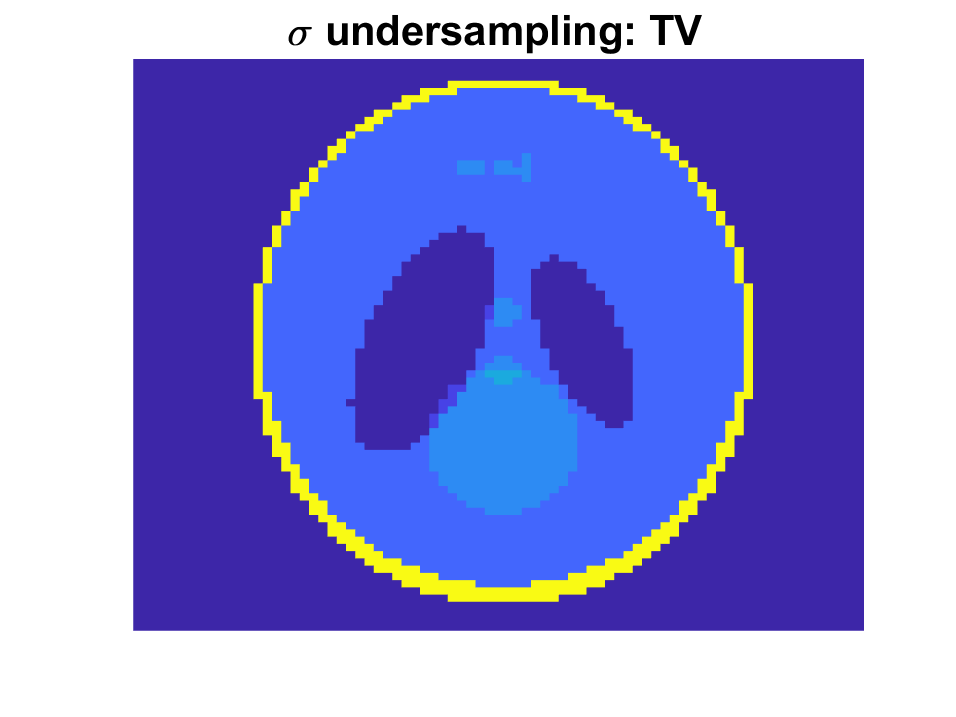} 
\includegraphics[width=0.31\textwidth,trim=50 10 50 0, clip]{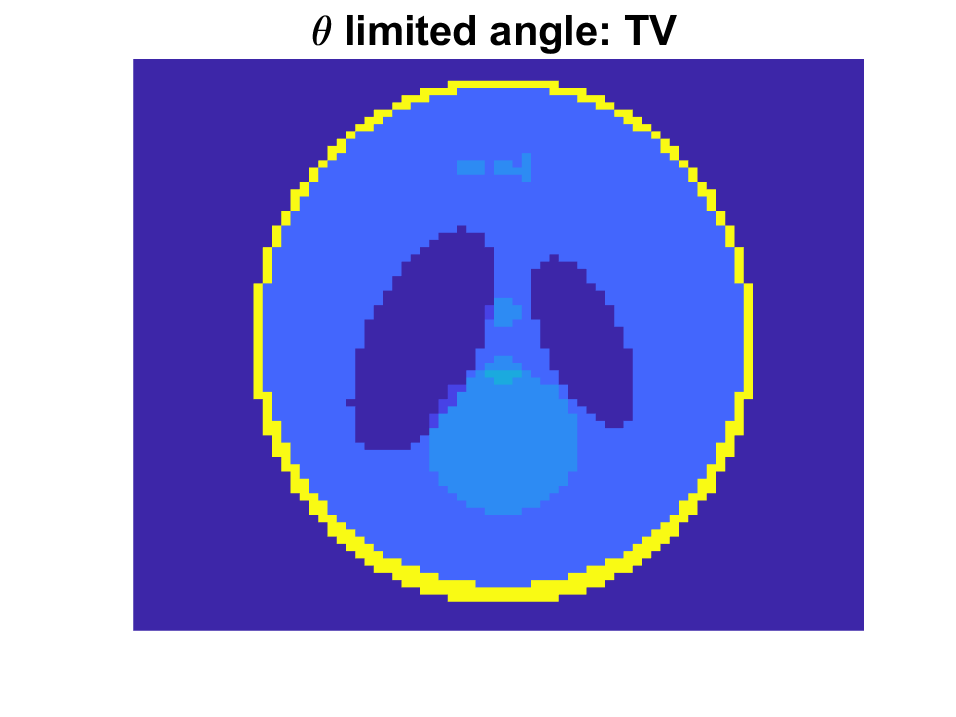} 
\caption{Comparision of the RTV filter with a variational TV regularization using a 
Pock-Chambolle method. Top row: RTV filter. Bottom row: TV-regularization. 
Left: full data. Center: Undersampled data with respect to $\sigma$.
Right: Limited angle data: $\theta \in [0,90^\circ]$.} \label{figcomp}
\end{figure}

Finally, we compare the RTV filter with a full variational TV regularization, i.e., a 
Tikhonov regularization with the \TV-seminorm as a regularizer. The computation of a minimizer is done by a Pock-Chambolle primal-dual iteration \cite{ChPo}, and the results are illustrated in Figure~\ref{figcomp} for the Shepp-Logan phantom. (Top: RTV. Bottom: TV-regularization). We used a rather coarse grid of $80 \times 80$ pixels for the phantom, since otherwise the computational complexity for the TV-regularization (both time and memory) becomes unreasonable.

The comparison is done for three cases of data: The results in the left column are obtained with full data, in the center column with undersampled data with respect to the offset variable $\sigma$ (only 50\% of lines are used), and in the right column with limited angle data with angles taken only from $[0,90^\circ]$. The case of RTV with undersampled data is computed as explained above, i.e., using a projection matrix $W$. 

The results clearly show the improvement gained from the TV-regularization; this is especially striking in the case of limited angle data. Note that the latter represents an unfavorable case for the RTV filter, since it does not involve $\theta$-derivatives, which limits its extrapolation capabilities in angular direction. In the case of $\sigma$-undersampling (center), also RTV performs better, while for full data the RTV method gives acceptable results. 

However, the improvement using TV-regularization comes with a drastically higher computational complexity; for instance, the computation for the RTV filter required about 0.01 seconds, while the 10 iterations for the Pock-Chambolle method required around 30 seconds (3000 times more); also memory requirements were an issue for TV regularization. This shows that the latter can hardly be used  with  practical devices, which is not a limitation for the RTV method.

\section{Conclusion}

We investigated norms for sinograms and conditions under which they are equivalent to some standard norms in image space. The Sharafutdinov norm was estimated against Bessel-potential norms in Theorem~\ref{mainshaf}. The $\no$-norm was compared to the usual total variation norm and equivalence was found for indicator functions of convex sets, cf.~Theorem~\ref{main1}, Corollary~\ref{maincor}, and for functions satisfying some symmetry (Theorem~\ref{th4}). Stability estimates for smooth functions were derived from Gagliardo-Nirenberg estimates. The results were used to construct nonlinear backprojection filters based on proximal mappings, which we then tested numerically on both simulated and experimental data.

\section{Acknowledgements and Support}

This research was funded in part by the Austrian Science Fund (FWF) SFB 10.55776/F68 ``Tomography Across the Scales'', project F6805-N36 (Tomography in Astronomy). For open access purposes, the authors have applied a CC BY public copyright license to any author-accepted manuscript version arising from this submission. The experimental THz-tomography data were recorded by Dr.~Peter Fosodeder during his employment at Recendt GmbH in the course of the ATTRACT project funded by the EC under Grant Agreement 777222. We want to thank a reviewer of a previous version of the manuscript and the reviewers of the current version for providing highly qualified comments which helped us to improve the article. 

{\footnotesize
\bibliographystyle{siam}
\bibliography{bib}
}

\end{document}